\newtheorem{Theorem}{Theorem}[section]
\newtheorem{Lemma}[Theorem]{Lemma}
\newtheorem{Proposition}[Theorem]{Proposition}
\newtheorem{Definition}[Theorem]{Definition}
\newtheorem{Remark}[Theorem]{Remark}
\newtheorem{Example}[Theorem]{Example}
\numberwithin{equation}{section}
\title{\textbf{Geometric Rough Paths above Mixed Fractional Brownian Motion}}
\author{Atef Lechiheb\thanks{Toulouse School of Economics, Université Toulouse Capitole, email: atef.lechiheb@tse-fr.eu}}
\date{\today}
\begin{document}

\maketitle
\begin{abstract}
This paper establishes a comprehensive theory of geometric rough paths for mixed fractional Brownian motion (MFBM) and its generalized multi-component extensions. We prove that for a generalized MFBM of the form $M_t^H(a) = \sum_{k=1}^N a_k B_t^{H_k}$ with $\min\{H_k\} > \frac{1}{4}$, there exists a canonical geometric rough path obtained as the limit of smooth rough paths associated with dyadic approximations. This extends the classical result of Coutin and Qian \cite{coutin2002} for single fractional Brownian motion to the mixed case. 

We provide explicit bounds on the $p$-variation norms and establish a Skorohod integral representation connecting our pathwise construction to the Malliavin calculus framework. Furthermore, we demonstrate applications to rough differential equations driven by MFBM, enabling the use of Lyons' universal limit theorem for this class of processes. Finally, we study the signature of MFBM paths, providing a complete algebraic characterization of their geometric properties.

Our approach unifies the treatment of multiple fractional components and reveals the fundamental interactions between different regularity scales, completing the rough path foundation for mixed fractional processes with applications in stochastic analysis and beyond.
\end{abstract}

\noindent\textbf{Mathematics Subject Classification:} 60G22, 60H10, 60L20, 91G80

\noindent\textbf{Keywords:} rough paths, mixed fractional Brownian motion, geometric rough paths, $p$-variation, Skorohod integral, rough differential equations, signature
\tableofcontents
\section{Introduction}

Rough path theory, introduced by Lyons \cite{lyons1998,lyons2002}, has revolutionized the analysis of stochastic systems driven by irregular signals. By encoding not only a path but also its \textbf{iterated integrals}, this framework provides a robust foundation for \textbf{pathwise stochastic calculus} beyond the semimartingale setting. The theory has found profound applications in stochastic analysis, financial mathematics, and signal processing \cite{friz2010}.

The seminal work of Coutin and Qian \cite{coutin2002} established that fractional Brownian motion (fBm) with Hurst parameter $H > \frac{1}{4}$ admits a canonical geometric rough path. This threshold is sharp and reflects fundamental regularity requirements for constructing iterated integrals through limiting procedures. Subsequent developments have extended this theory to broader classes of Gaussian processes \cite{coutin2009,friz2008}.

Parallel to these developments, mixed fractional Brownian motion (MFBM) emerged as an important modeling tool in mathematical finance. Introduced by Cheridito \cite{cheridito2001}, processes of the form $M_t = aB_t + bB_t^H$ were shown to preserve arbitrage-free properties while capturing long-range dependence. This was later generalized to multi-component processes \cite{zili2006,miao2008,thale2009}:
\[
M_t^H(a) = \sum_{k=1}^N a_k B_t^{H_k},
\]
where $B_t^{H_k}$ are independent fractional Brownian motions with different Hurst parameters $H_k$. Such processes offer rich modeling flexibility, capturing multi-scale behavior and different memory properties within a single driving noise, with recent applications in rough volatility modeling \cite{bayer2020}.

Despite the parallel advancements in rough path theory and MFBM applications, a comprehensive rough path treatment of mixed fractional processes remains largely unexplored. Several fundamental questions arise: Does the geometric rough path exist for mixed processes when all Hurst parameters exceed $\frac{1}{4}$? How do interactions between different fractional components affect the rough path construction? What are the precise regularity properties in terms of $p$-variation for mixed processes? Can Lyons' universal limit theorem be applied to stochastic systems driven by MFBM?

This paper bridges the gap by developing a complete rough path theory for generalized mixed fractional Brownian motions. Our main contributions are:

\begin{enumerate}
\item \textbf{Existence of geometric rough paths}: We prove that for GMFBM with $\min\{H_k\} > \frac{1}{4}$, the smooth rough paths associated with dyadic approximations converge to a geometric rough path in the $p$-variation topology (Theorem \ref{thm:main} in Section \ref{sec:existence}).

\item \textbf{Sharp regularity conditions}: We establish that the condition $\min\{H_k\} > \frac{1}{4}$ is optimal and provide explicit $p$-variation estimates that depend on the Hurst parameters and mixing coefficients (Theorem \ref{thm:sharpness} in Section \ref{sec:existence}).

\item \textbf{Skorohod integral representation}: We derive an explicit representation of the rough path in terms of multiple Wiener integrals, connecting our pathwise construction to the Malliavin calculus framework \cite{nualart1995} (Theorem \ref{thm:skorohod} in Section \ref{sec:skorohod}).

\item \textbf{Applications to rough differential equations}: We demonstrate how our construction enables the study of RDEs driven by MFBM, with applications to multi-scale modeling (Theorem \ref{thm:rde} in Section \ref{sec:applications}).

\item \textbf{Signature analysis}: We provide a complete characterization of the signature of MFBM paths, revealing their algebraic structure and potential applications in machine learning and data analysis (Section \ref{sec:signature}).
\end{enumerate}

Our work connects to several active research areas. The multi-scale nature of GMFBM relates to recent developments in multi-level Monte Carlo methods \cite{giles2008} and regularity structures \cite{hairer2014}. The Skorohod representation provides connections to the paracontrolled calculus framework \cite{gubinelli2015} for singular stochastic PDEs. The signature analysis opens up new perspectives in machine learning and functional data analysis \cite{chevyrev2018,signature2024}.

The paper is organized as follows: Section \ref{sec:preliminaries} reviews essential background on rough paths and mixed fractional Brownian motion. Section \ref{sec:existence} contains our main existence theorem and its proof. Section \ref{sec:skorohod} develops the Skorohod integral representation. Section \ref{sec:applications} discusses applications to rough differential equations. Section \ref{sec:signature} studies the signature of MFBM paths. Section \ref{sec:conclusion} concludes with directions for future research.

Our results provide a solid foundation for further research on stochastic systems driven by multi-scale fractional noises and their applications across various domains, from mathematical finance to stochastic partial differential equations and machine learning.
\section{Preliminaries}\label{sec:preliminaries}

This section provides the necessary background on rough path theory and mixed fractional Brownian motion. We establish notations and recall fundamental results that will be used throughout the paper.

\subsection{Rough Paths Theory}

The theory of rough paths was introduced by Lyons \cite{lyons1998} and further developed in \cite{lyons2002,lyons2004,friz2010}. We recall the essential definitions and results that form the foundation of our work.

\begin{Definition}[Truncated tensor algebra]
Let $V$ be a finite-dimensional vector space. For any integer $n \geq 1$, the truncated tensor algebra of order $n$ is defined as:
\[
T^n(V) = \mathbb{R} \oplus V \oplus V^{\otimes 2} \oplus \cdots \oplus V^{\otimes n}.
\]
The multiplication in $T^n(V)$ is given by the tensor product, truncated at level $n$.
\end{Definition}

\begin{Definition}[Multiplicative functional]
A multiplicative functional of degree $n$ on $V$ is a map $X: \Delta_T \to T^n(V)$, where $\Delta_T = \{(s,t) \in [0,T]^2 : 0 \leq s \leq t \leq T\}$, such that for all $0 \leq s \leq t \leq u \leq T$, Chen's identity holds:
\[
X_{s,u} = X_{s,t} \otimes X_{t,u}.
\]
We write $X_{s,t} = (1, X_{s,t}^1, \ldots, X_{s,t}^n)$ where $X_{s,t}^k \in V^{\otimes k}$.
\end{Definition}

\begin{Definition}[$p$-variation]
For $p \geq 1$, the $p$-variation of a multiplicative functional $X$ is defined as:
\[
\|X\|_{p\text{-var}} = \max_{k=1,\ldots,\lfloor p \rfloor} \sup_{\mathcal{D}} \left( \sum_{[u,v] \in \mathcal{D}} |X_{u,v}^k|^{p/k} \right)^{k/p},
\]
where the supremum is taken over all partitions $\mathcal{D}$ of $[0,T]$.
\end{Definition}

\begin{Definition}[Rough path]
A $p$-rough path is a multiplicative functional $X: \Delta_T \to T^{\lfloor p \rfloor}(V)$ with finite $p$-variation. The space of $p$-rough paths is denoted by $\Omega_p(V)$.
\end{Definition}

\begin{Definition}[Geometric rough path]
A rough path $X \in \Omega_p(V)$ is called geometric if there exists a sequence of smooth rough paths $(X(n))_{n \geq 1}$ such that:
\[
\lim_{n \to \infty} d_p(X(n), X) = 0,
\]
where $d_p$ is the $p$-variation distance. This concept originates from \cite{lyons1998}.
\end{Definition}

The fundamental theorem of rough paths theory, due to Lyons \cite{lyons1998}, states that for any geometric rough path $X$ and sufficiently regular vector field $f$, the rough differential equation:
\[
dY_t = f(Y_t) dX_t
\]
admits a unique solution that depends continuously on the driving rough path $X$.

\subsection{Mixed Fractional Brownian Motion}

Mixed fractional Brownian motion was introduced by Cheridito \cite{cheridito2001} and further studied by Zili \cite{zili2006}. The generalized version with multiple components was investigated by Miao et al. \cite{miao2008} and Th\"ale \cite{thale2009}.

\begin{Definition}[Generalized MFBM]\label{def:gmfbm}
Let $N \geq 1$, $H = (H_1, \ldots, H_N) \in (0,1)^N$, and $a = (a_1, \ldots, a_N) \in \mathbb{R}^N\setminus\{0\}$. A generalized mixed fractional Brownian motion (GMFBM) is a process $\{M_t^H(a), t \geq 0\}$ defined by:
\[
M_t^H(a) = \sum_{k=1}^N a_k B_t^{H_k},
\]
where $\{B^{H_k}\}_{k=1}^N$ are independent fractional Brownian motions with Hurst parameters $H_k$.
\end{Definition}

\begin{Proposition}[Covariance structure \cite{zili2006}]
For any $s, t \geq 0$, the covariance function of GMFBM is given by:
\[
\mathbb{E}[M_t^H(a) M_s^H(a)] = \frac{1}{2} \sum_{k=1}^N a_k^2 \left( t^{2H_k} + s^{2H_k} - |t-s|^{2H_k} \right).\]
\end{Proposition}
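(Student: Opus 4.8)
The plan is to reduce this identity to the standard covariance formula for a single fractional Brownian motion via bilinearity of the expectation and the independence of the components. First I would expand the product using the definition in \cref{def:gmfbm}:
\[
M_t^H(a)\, M_s^H(a) = \left(\sum_{k=1}^N a_k B_t^{H_k}\right)\left(\sum_{j=1}^N a_j B_s^{H_j}\right) = \sum_{k=1}^N \sum_{j=1}^N a_k a_j\, B_t^{H_k} B_s^{H_j}.
\]
Taking expectations and using linearity, the problem becomes the evaluation of the double sum $\sum_{k,j} a_k a_j\, \mathbb{E}\!\left[B_t^{H_k} B_s^{H_j}\right]$.

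The central step is to discard the off-diagonal terms. Since the fractional Brownian motions $\{B^{H_k}\}_{k=1}^N$ are independent by hypothesis and each is centered, for $k \neq j$ we have $\mathbb{E}\!\left[B_t^{H_k} B_s^{H_j}\right] = \mathbb{E}\!\left[B_t^{H_k}\right]\mathbb{E}\!\left[B_s^{H_j}\right] = 0$. Hence only the diagonal $k = j$ contributes, leaving
\[
\mathbb{E}\!\left[M_t^H(a)\, M_s^H(a)\right] = \sum_{k=1}^N a_k^2\, \mathbb{E}\!\left[B_t^{H_k} B_s^{H_k}\right].
\]
It then remains to substitute the classical covariance of an fBm with Hurst index $H_k$, namely $\mathbb{E}\!\left[B_t^{H_k} B_s^{H_k}\right] = \tfrac{1}{2}\bigl(t^{2H_k} + s^{2H_k} - |t-s|^{2H_k}\bigr)$, which yields the claimed formula directly.

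In truth there is no serious obstacle here; the only point requiring care is the justification that the cross terms vanish, and this rests entirely on the two structural assumptions built into the definition of GMFBM: mutual independence of the components and the zero-mean property of each fractional Brownian motion. I would state these explicitly at the outset so that the vanishing of the off-diagonal expectations is transparent, after which the result follows by a one-line substitution of the single-fBm covariance.
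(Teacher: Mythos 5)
Your proof is correct: the paper states this proposition as a cited result from Zili (2006) without giving its own proof, and your argument — bilinear expansion, vanishing of cross terms by independence and centeredness, then substitution of the single-fBm covariance — is exactly the standard computation that underlies the cited formula. No gaps; the justification of the off-diagonal cancellation is the only point of substance and you handle it properly.
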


\begin{Proposition}[Increment properties \cite{thale2009}]
For $0 \leq s \leq t$, the second moment of increments satisfies:
\[
\mathbb{E}[(M_t^H(a) - M_s^H(a))^2] = \sum_{k=1}^N a_k^2 |t-s|^{2H_k}.
\]
Moreover, for $0 \leq u \leq v \leq s \leq t$, the covariance of increments is:
\[
\mathbb{E}[(M_v^H(a) - M_u^H(a))(M_t^H(a) - M_s^H(a))] = \frac{1}{2} \sum_{k=1}^N a_k^2 \left( |t-u|^{2H_k} + |s-v|^{2H_k} - |t-v|^{2H_k} - |s-u|^{2H_k} \right).\]
\end{Proposition}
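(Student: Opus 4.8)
The plan is to reduce both identities to the increment properties of a single fractional Brownian motion, exploiting the independence of the components $B^{H_1},\ldots,B^{H_N}$. Abbreviating $M_t := M_t^H(a)$ and writing $\Delta_{s,t}^k := B_t^{H_k} - B_s^{H_k}$ for the increment of the $k$-th component, the GMFBM increment is $M_t - M_s = \sum_{k=1}^N a_k\,\Delta_{s,t}^k$, and likewise $M_v - M_u = \sum_{k=1}^N a_k\,\Delta_{u,v}^k$. Expanding the relevant product and taking expectations yields the double sum $\sum_{k,l} a_k a_l\,\E[\Delta_{u,v}^k\,\Delta_{s,t}^l]$.

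The key structural step is that the off-diagonal terms vanish. For $k \neq l$ the processes $B^{H_k}$ and $B^{H_l}$ are independent and centred, so $\Delta_{u,v}^k$ is independent of $\Delta_{s,t}^l$ with zero mean, giving $\E[\Delta_{u,v}^k\,\Delta_{s,t}^l] = \E[\Delta_{u,v}^k]\,\E[\Delta_{s,t}^l] = 0$. Only the diagonal $k=l$ contributions survive, reducing the double sum to $\sum_{k=1}^N a_k^2\,\E[\Delta_{u,v}^k\,\Delta_{s,t}^k]$.

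It then remains to evaluate the single-component factor. Expanding $\E[\Delta_{u,v}^k\,\Delta_{s,t}^k] = \E[B_v^{H_k}B_t^{H_k}] - \E[B_v^{H_k}B_s^{H_k}] - \E[B_u^{H_k}B_t^{H_k}] + \E[B_u^{H_k}B_s^{H_k}]$ and substituting the standard fractional covariance $\E[B_\sigma^{H_k}B_\tau^{H_k}] = \tfrac12(\sigma^{2H_k} + \tau^{2H_k} - |\sigma-\tau|^{2H_k})$, one sees that the pure power terms $u^{2H_k}, v^{2H_k}, s^{2H_k}, t^{2H_k}$ cancel in pairs, leaving $\tfrac12(|t-u|^{2H_k} + |s-v|^{2H_k} - |t-v|^{2H_k} - |s-u|^{2H_k})$; summing against $a_k^2$ gives the second identity. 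For the second moment I would carry out the same diagonalisation for $\E[(M_t - M_s)^2] = \sum_k a_k^2\,\E[(\Delta_{s,t}^k)^2]$ and use the elementary fBm relation $\E[(\Delta_{s,t}^k)^2] = |t-s|^{2H_k}$.

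The computation carries no genuine obstacle; the only point requiring care is the bookkeeping of the four covariance terms and confirming that the non-difference contributions cancel exactly, which is where a sign error would most easily arise. A clean alternative avoids re-deriving the single-component identity: invoking the covariance formula of the preceding proposition, one writes $\E[(M_v - M_u)(M_t - M_s)] = \E[M_vM_t] - \E[M_vM_s] - \E[M_uM_t] + \E[M_uM_s]$ and substitutes the established closed form for each bracket, at the cost of the same cancellation check.
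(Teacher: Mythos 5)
Your proof is correct: the paper states this proposition without proof, simply citing Th\"ale \cite{thale2009}, so there is no in-text argument to compare against. Your derivation --- expanding the increment, killing the off-diagonal terms by independence and centredness, and reducing the diagonal terms to the standard fBm covariance $R_{H_k}(\sigma,\tau)=\tfrac12(\sigma^{2H_k}+\tau^{2H_k}-|\sigma-\tau|^{2H_k})$ with the pure power terms cancelling in pairs --- is the standard and complete way to establish it.
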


\begin{Proposition}[H\"older regularity \cite{miao2008}]
Almost every sample path of $M_t^H(a)$ is locally H\"older continuous of order $\gamma$ for any:
\[
\gamma < \min\{H_1, \ldots, H_N\}.
\]
In particular, if $\min\{H_k\} > \frac{1}{2}$, the process has long-range dependence; if $\max\{H_k\} \leq \frac{1}{2}$, it exhibits short-range dependence.
\end{Proposition}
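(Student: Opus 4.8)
The plan is to obtain the H\"older continuity directly from the Kolmogorov--Chentsov continuity criterion, exploiting the fact that $M^H(a)$ is a centered Gaussian process whose increment variance is already recorded in the increment-properties Proposition. First I would set $H_* = \min\{H_1,\dots,H_N\}$ and observe that, since each increment $M_t^H(a)-M_s^H(a)$ is a finite linear combination of independent Gaussian increments, it is itself centered Gaussian; hence all its moments are controlled by its variance through the standard-normal absolute-moment constants, so that $\E\bigl[|M_t^H(a)-M_s^H(a)|^m\bigr] = C_m\bigl(\sum_{k=1}^N a_k^2|t-s|^{2H_k}\bigr)^{m/2}$ for every $m\ge 1$.

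Next I would reduce the mixed sum to a single scale on bounded time intervals. Restricting to $|t-s|\le 1$ (which suffices for a \emph{local} statement), each term obeys $|t-s|^{2H_k}\le |t-s|^{2H_*}$, so $\sum_k a_k^2|t-s|^{2H_k}\le (\sum_k a_k^2)\,|t-s|^{2H_*}$ and therefore
\[
\E\bigl[|M_t^H(a)-M_s^H(a)|^m\bigr]\le C_m\Bigl(\sum_{k=1}^N a_k^2\Bigr)^{m/2}|t-s|^{mH_*}.
\]
Applying Kolmogorov--Chentsov with exponent $\alpha=m$ and $1+\beta=mH_*$ (valid once $m>1/H_*$) yields a modification that is locally H\"older continuous of any order $\gamma<(mH_*-1)/m = H_*-1/m$. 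Letting $m\to\infty$ closes the gap and gives H\"older regularity for every $\gamma<H_*=\min\{H_k\}$, as claimed.

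For the dependence dichotomy I would analyze the autocovariance of the unit-lag increments (fractional Gaussian noise). Specializing the increment-covariance formula of the earlier Proposition to $u=0$, $v=1$, $s=n$, $t=n+1$ gives $\rho(n)=\tfrac12\sum_{k=1}^N a_k^2\bigl((n+1)^{2H_k}+(n-1)^{2H_k}-2n^{2H_k}\bigr)$, and a second-order Taylor expansion shows $\rho(n)\sim \bigl(\sum_{k:\,H_k=H^*} a_k^2\,H_k(2H_k-1)\bigr)\,n^{2H^*-2}$ as $n\to\infty$, where $H^*=\max_k H_k$. If $\min\{H_k\}>\tfrac12$ then every coefficient $a_k^2 H_k(2H_k-1)$ is strictly positive, the tail $\rho(n)$ is eventually positive with $\sum_n\rho(n)=\infty$, so the process exhibits long-range dependence; if instead $\max\{H_k\}\le\tfrac12$, each component with $H_k<\tfrac12$ contributes a summable $O(n^{2H_k-2})$ tail while any component with $H_k=\tfrac12$ has uncorrelated increments and contributes nothing, whence $\sum_n|\rho(n)|<\infty$ and the dependence is short-range.

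The technical content is entirely routine; the only point that genuinely needs care is the interaction of the several scales, and I would emphasize that the two halves of the statement are driven by \emph{opposite} extremes. For small lags the \emph{smallest} exponent dominates and dictates the pathwise H\"older regularity, while for large lags the \emph{largest} exponent dominates and dictates the memory behaviour, so the main obstacle is simply to orient each inequality correctly in its own regime and to invoke the local restriction $|t-s|\le1$ \emph{before} collapsing the mixed sum to a single power.
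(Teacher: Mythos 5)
The paper does not actually prove this statement: it appears in the preliminaries as a background fact quoted from \cite{miao2008}, so there is no in-paper argument to compare against. Your self-contained proof is correct and is the standard one. The reduction of the mixed increment variance to the single scale $|t-s|^{2H_*}$ with $H_*=\min_k H_k$ on $|t-s|\le 1$, the use of Gaussian moment equivalence to feed Kolmogorov--Chentsov with arbitrarily large $m$, and the limit $m\to\infty$ to reach every $\gamma<H_*$ are all sound; your closing remark that the \emph{smallest} Hurst index governs small-lag (pathwise) behaviour while the \emph{largest} governs large-lag (memory) behaviour is exactly the right way to organize the two halves. Two minor points of precision. First, Kolmogorov--Chentsov produces a H\"older-continuous \emph{modification}; to speak of ``almost every sample path'' one should either fix the continuous version of each $B^{H_k}$ from the outset (as is implicit in the paper) or note that a finite sum of continuous modifications is a modification of the sum. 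Second, your asymptotic $\rho(n)\sim\bigl(\sum_{k:H_k=H^*}a_k^2H_k(2H_k-1)\bigr)n^{2H^*-2}$ is only a genuine equivalence when the leading coefficient is nonzero, i.e.\ when $H^*\neq\tfrac12$; you do handle the $H_k=\tfrac12$ components correctly afterwards (their increments are uncorrelated at positive lags), but it would be cleaner to state the expansion termwise, $\rho(n)=\sum_k a_k^2 H_k(2H_k-1)n^{2H_k-2}+O(n^{2H_k-4})$, and then sum, rather than to assert a single dominant-term asymptotic before the case split. Neither point is a gap in substance.
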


\begin{Proposition}[Mixed self-similarity]
For any $h > 0$, the processes:
\[
\{M_{ht}^H(a), t \geq 0\} \quad \text{and} \quad \{M_t^H(a_1 h^{H_1}, \ldots, a_N h^{H_N}), t \geq 0\}
\]
have the same finite-dimensional distributions.
\end{Proposition}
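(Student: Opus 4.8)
The plan is to exploit the Gaussianity of the GMFBM and reduce the equality of finite-dimensional distributions to an equality of covariance functions. Since $M_t^H(a) = \sum_{k=1}^N a_k B_t^{H_k}$ is a finite linear combination of independent fractional Brownian motions, each of which is a centered Gaussian process, both $\{M_{ht}^H(a), t \geq 0\}$ and $\{M_t^H(a_1 h^{H_1}, \ldots, a_N h^{H_N}), t \geq 0\}$ are themselves centered Gaussian processes. For Gaussian processes the finite-dimensional distributions are completely determined by the mean and covariance functions; as both means vanish identically, it suffices to check that the two covariance functions coincide.

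First I would compute the covariance of the time-scaled process by applying the covariance formula established above to the arguments $ht$ and $hs$:
\[
\mathbb{E}[M_{ht}^H(a) M_{hs}^H(a)] = \frac{1}{2} \sum_{k=1}^N a_k^2 \left( (ht)^{2H_k} + (hs)^{2H_k} - |ht-hs|^{2H_k} \right).
\]
The key algebraic step is to factor the scaling parameter out of each summand, using the homogeneity of the exponent $2H_k$: one has $(ht)^{2H_k} = h^{2H_k} t^{2H_k}$ and $|ht-hs|^{2H_k} = h^{2H_k}|t-s|^{2H_k}$, so every term in the $k$-th summand carries the common factor $h^{2H_k}$. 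I would then compute the covariance of the coefficient-rescaled process $M_t^H(b)$ with $b_k = a_k h^{H_k}$; the same covariance formula produces a sum with coefficients $b_k^2 = a_k^2 h^{2H_k}$, which is precisely the factored expression obtained in the previous step. A term-by-term comparison shows the two covariance functions are identical for all $s,t \geq 0$, and since both processes are centered Gaussian, their finite-dimensional distributions agree.

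I do not anticipate a serious obstacle here: the only point requiring genuine care is the appeal to Gaussianity, which is what licenses reducing equality of laws to equality of covariances rather than attempting a direct distributional argument. An alternative route would invoke the self-similarity of each individual component, $\{B_{ht}^{H_k}, t \geq 0\} \overset{d}{=} \{h^{H_k} B_t^{H_k}, t \geq 0\}$, together with the independence of the $B^{H_k}$; but this requires tracking the joint law of the rescaled components, whereas the covariance computation handles the mixing automatically and is more transparent.
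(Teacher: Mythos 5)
Your proof is correct. The paper states this proposition without proof (it is imported from the literature on mixed fractional Brownian motion, cf.\ Zili and Th\"ale), and your argument --- reduce equality of finite-dimensional distributions to equality of covariance functions via Gaussianity and centeredness, then factor $h^{2H_k}$ out of each summand of the covariance formula so that it is absorbed into the coefficients $b_k = a_k h^{H_k}$ --- is exactly the standard derivation and fills the gap cleanly.
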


\subsection{Dyadic Approximations and Rough Path Construction}

The construction of rough paths above Gaussian processes typically proceeds via dyadic linear approximations, as developed in \cite{coutin2002}. 

\begin{Definition}[Dyadic approximation]
For a process $X_t$, define its $m$-th dyadic approximation by:
\[
X_t^m = X_{t_{l-1}^m} + 2^m(t - t_{l-1}^m)(X_{t_l^m} - X_{t_{l-1}^m}), \quad t \in [t_{l-1}^m, t_l^m],
\]
where $t_k^m = k2^{-m}T$ for $k = 0, \ldots, 2^m$.
\end{Definition}

For smooth paths $X^m$, the iterated integrals can be defined classically:
\[
\mathbb{X}_{s,t}^{m,(2)} = \int_s^t (X_u^m - X_s^m) \otimes dX_u^m,
\]
and similarly for higher orders. The fundamental question is whether the sequence $\mathbb{X}^m = (1, X^m, \mathbb{X}^{m,(2)}, \ldots)$ converges in the $p$-variation topology.

\begin{Theorem}[Coutin-Qian for fBm \cite{coutin2002}]\label{thm:coutin-qian}
For fractional Brownian motion $B^H$ with $H > \frac{1}{4}$, the smooth rough paths $\mathbb{B}^{H,m}$ associated to the dyadic approximations converge almost surely in the $p$-variation topology for any $p > 1/H$. The limit $\mathbb{B}^H$ is a geometric rough path above $B^H$.
\end{Theorem}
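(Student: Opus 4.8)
The plan is to reduce the convergence statement to a pair of moment estimates on the iterated integrals of the dyadic approximations and then to upgrade these to almost sure $p$-variation convergence by a Kolmogorov-type criterion for rough paths. Since $H > \tfrac14$ forces $1/H < 4$, we may fix $p$ with $1/H < p < 4$, so that $\lfloor p \rfloor \le 3$ and it suffices to control the levels $k = 1, 2, 3$ of $\mathbb{B}^{H,m} = (1, B^{H,m}, \mathbb{B}^{H,m,(2)}, \mathbb{B}^{H,m,(3)})$. Each approximation $B^{H,m}$ is piecewise linear, hence of bounded variation, so its iterated integrals are classically defined and the resulting $\mathbb{B}^{H,m}$ is automatically a smooth, in particular geometric, rough path; the content of the theorem is the existence of a limit and its geometricity.

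The first and central step is to establish, uniformly in $m$, the moment bound
\[
\mathbb{E}\left[ \left| \mathbb{B}^{H,m,(k)}_{s,t} \right|^2 \right] \le C\, |t-s|^{2kH}, \qquad k = 1, 2, 3,
\]
together with a rate estimate between consecutive dyadic levels,
\[
\mathbb{E}\left[ \left| \mathbb{B}^{H,m+1,(k)}_{s,t} - \mathbb{B}^{H,m,(k)}_{s,t} \right|^2 \right] \le C\, 2^{-m\theta}\, |t-s|^{2kH}
\]
for some $\theta > 0$. Because each $\mathbb{B}^{H,m,(k)}_{s,t}$ lives in the $k$-th Wiener chaos of the Gaussian family generated by $B^H$, Gaussian hypercontractivity promotes these $L^2$ bounds to bounds on all $L^q$ norms with the same scaling in $|t-s|$, which is exactly what the $p$-variation criterion will require. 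The estimates themselves are computed from the explicit covariance of fBm increments, the dyadic version of the formula recalled in the increment-properties proposition, so that the level-$k$ bound becomes a question about the convergence and scaling of multiple sums of products of increment covariances over the dyadic grid.

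The hard part is the level-two estimate for $\tfrac14 < H \le \tfrac12$, where the genuine object is the antisymmetric area of the (multidimensional) increments, an element of the second Wiener chaos. For independent coordinates its variance equals a two-dimensional Young-type integral of the covariance $R(s,t) = \tfrac12(s^{2H}+t^{2H}-|t-s|^{2H})$ against itself; over the dyadic grid this becomes a double sum of increment covariances $\mathbb{E}[\Delta_i B^H\, \Delta_j B^H]$, which for fBm decay like $|i-j|^{2H-2}$. The decisive structural fact is that $R$ has finite two-dimensional $\rho$-variation with $\rho = 1/(2H)$, and the Young 2D estimate controlling the area requires $\rho < 2$, that is, exactly $H > \tfrac14$; this yields the scaling $\mathbb{E}[|\mathbb{B}^{H,m,(2)}_{s,t}|^2] \le C|t-s|^{4H}$ uniformly in $m$, and its failure for $H \le \tfrac14$ is the source of the sharp threshold. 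The level-three bound, relevant only for $H$ near $\tfrac14$, rests on the same covariance input together with the chaos factorization, but it is the level-two area that carries the geometry and is the real obstacle; everything above it is comparatively mechanical.

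Finally, with the uniform moment and rate bounds in hand, I would invoke a Kolmogorov--Garsia--Rodemich--Rumsey criterion for multiplicative functionals: the rate estimate shows that $(\mathbb{B}^{H,m})_m$ is Cauchy in $L^q$ for the homogeneous $p$-variation distance $d_p$, and Borel--Cantelli together with the summable rate $2^{-m\theta}$ yields almost sure convergence of $\mathbb{B}^{H,m}$ to a limit $\mathbb{B}^H$ in the $d_p$ topology, for every $p > 1/H$. Geometricity is then immediate: each $\mathbb{B}^{H,m}$ takes values in the free nilpotent group $G^{\lfloor p\rfloor}(V)$, equivalently its components satisfy the shuffle relations, this group is closed in $T^{\lfloor p\rfloor}(V)$, and $d_p$-convergence implies convergence of the $\mathbb{B}^{H,m}_{s,t}$, so the limit also lies in $G^{\lfloor p\rfloor}(V)$ and is therefore a geometric rough path above $B^H$.
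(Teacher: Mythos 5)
The paper does not actually prove this statement: it is quoted verbatim as background (Theorem~\ref{thm:coutin-qian}) with the proof delegated entirely to \cite{coutin2002}, so there is no in-paper argument to compare against. Your outline is nonetheless a faithful reconstruction of the standard Coutin--Qian proof and of the skeleton the paper itself reuses in Section~\ref{sec:existence} for the mixed case --- uniform chaos-level second-moment bounds from the $|i-j|^{2H-2}$ covariance decay, a geometric rate between consecutive dyadic levels, hypercontractivity, a Kolmogorov/Garsia-type upgrade to almost sure $d_p$-convergence, and geometricity from closedness of the free nilpotent group --- and it correctly identifies the level-two double sum (equivalently, finiteness of the 2D $\rho$-variation of $R$ with $\rho=1/(2H)<2$) as the exact source of the threshold $H>\tfrac14$; the only imprecision worth noting is that the intermediate rate bound cannot hold with both the full exponent $|t-s|^{2kH}$ and geometric decay $2^{-m\theta}$ simultaneously, and one must sacrifice an $\varepsilon$ in the H\"older exponent when interpolating.
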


Our goal is to extend this result to the mixed case, dealing with the additional challenges posed by the interaction between different fractional components.

\subsection{Notation and Conventions}

Throughout this paper, we employ the following notation:
\begin{itemize}
\item $C, C_p, C_{H}$ denote generic constants that may change from line to line
\item $a \lesssim b$ means $a \leq C b$ for some constant $C > 0$
\item $\lfloor p \rfloor$ denotes the integer part of $p$
\item $\Delta_{[s,t]}^n = \{(u_1, \ldots, u_n) \in [s,t]^n : s \leq u_1 \leq \cdots \leq u_n \leq t\}$
\item For a multi-index $\alpha = (\alpha_1, \ldots, \alpha_n)$, we denote $|\alpha| = n$
\item $\otimes$ denotes the tensor product
\item $\mathbb{E}$ denotes mathematical expectation
\end{itemize}

We assume that all stochastic processes are defined on a complete probability space $(\Omega, \mathcal{F}, \mathbb{P})$ and that all vector spaces are finite-dimensional unless otherwise specified. The time interval is fixed as $[0,T]$ for some $T > 0$.

\section{Existence of Geometric Rough Paths for MFBM}\label{sec:existence}

This section contains our main result: the existence of geometric rough paths above generalized mixed fractional Brownian motion. We extend the approach of Coutin and Qian \cite{coutin2002} to the mixed case, dealing with the additional complexities introduced by multiple fractional components.

\subsection{Main Result}

\begin{Theorem}[Existence of geometric rough path for GMFBM]\label{thm:main}
Let $M_t^H(a) = \sum_{k=1}^N a_k B_t^{H_k}$ be a generalized mixed fractional Brownian motion with $\min\{H_1, \ldots, H_N\} > \frac{1}{4}$. Let $M_t^m$ be the $m$-th dyadic linear approximation of $M_t^H(a)$, and let $\mathbb{M}^m$ be the smooth rough path above $M_t^m$ defined by iterated Young integrals.

Then, for any $p > \frac{1}{\min\{H_k\}}$, the sequence $(\mathbb{M}^m)_{m \geq 1}$ converges almost surely in the $p$-variation topology to a geometric rough path $\mathbb{M}$ above $M_t^H(a)$. Moreover, the limit $\mathbb{M}$ is independent of the choice of dyadic approximation.
\end{Theorem}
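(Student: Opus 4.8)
The plan is to realize $\mathbb{M}$ as the pushforward of the joint rough path lift of the $\mathbb{R}^N$-valued Gaussian process $\mathbf{B}_t = (B^{H_1}_t, \ldots, B^{H_N}_t)$ under the linear map $L\colon \mathbb{R}^N \to \mathbb{R}$, $L(x) = \sum_k a_k x_k$. Since $L$ is linear it induces a rough-path morphism $L_* = \bigoplus_k L^{\otimes k}$ on the truncated tensor algebra, which commutes with the tensor product and hence preserves Chen's identity, sends smooth (geometric) rough paths to smooth (geometric) ones, and is Lipschitz for the $p$-variation metric. Because iterated integrals are natural under linear images of the driving path, the smooth lift of $M^m = L(\mathbf{B}^m)$ is exactly $L_*\mathbb{B}^m$. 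Consequently it suffices to prove that the dyadic smooth rough paths $\mathbb{B}^m$ above $\mathbf{B}^m$ converge almost surely in $p$-variation to a geometric rough path $\mathbb{B}$, and then set $\mathbb{M} := L_*\mathbb{B}$, $\mathbb{M}^m = L_*\mathbb{B}^m$.

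Because $\min H_k > \frac14$ we may fix $p \in (1/\min H_k,\, 4)$, so $\lfloor p \rfloor \le 3$ and only the first three tensor levels must be controlled. The essential structural point is that, the $B^{H_k}$ being independent, the covariance of $\mathbf{B}$ is block-diagonal, $\mathbb{E}[\mathbf{B}^{(i)}_s \mathbf{B}^{(j)}_t] = \delta_{ij}\,R^{H_i}(s,t)$. Expanding the level-$n$ iterated integral of $\mathbf{B}^m$ into scalar components produces, for each multi-index $(k_1,\ldots,k_n)$, a term $\int_{\Delta^n_{[s,t]}} dB^{H_{k_1},m}_{u_1} \cdots dB^{H_{k_n},m}_{u_n}$. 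I would split these into \emph{diagonal} terms ($k_1 = \cdots = k_n$), which are precisely the single-fBm iterated integrals controlled by the Coutin--Qian estimates of Theorem~\ref{thm:coutin-qian}, and \emph{mixed} terms involving at least two distinct indices. For a mixed term, conditioning successively on the independent factors reduces its second moment to an iterated $2$D Young integral against the product covariances, giving $\mathbb{E}|\cdots|^2 \lesssim |t-s|^{2(H_{k_1} + \cdots + H_{k_n})}$. Crucially $H_{k_1} + \cdots + H_{k_n} \ge n\min H_k > n/p$, so every mixed term is \emph{more} regular than the critical diagonal scaling $n/p$ and poses no obstruction to level-$n$ regularity.

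With these size bounds in hand, convergence rests on the telescoping estimate for consecutive dyadic levels: for each $n \le 3$ and each component I would show
\[
\mathbb{E}\big| \mathbb{B}^{m+1,(n)}_{s,t} - \mathbb{B}^{m,(n)}_{s,t} \big|^2 \lesssim 2^{-m\theta}\, |t-s|^{2n\beta}
\]
for some $\theta > 0$ and $\beta$ slightly below $\min H_k$, the diagonal contribution supplied by Coutin--Qian and the mixed contribution by the same conditioning-plus-$2$D-Young argument applied to the difference of integrators $B^{H,m+1} - B^{H,m}$. Since the iterated integrals of the piecewise-linear $\mathbf{B}^m$ are polynomials of degree $\le n$ in a Gaussian family, they lie in a fixed finite sum of Wiener chaoses, so Gaussian hypercontractivity upgrades the $L^2$ bounds to $L^q$ for every $q$. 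Feeding the resulting moment bounds into a Kolmogorov-type / Garsia--Rodemich--Rumsey criterion for the $p$-variation metric yields $\mathbb{E}[d_p(\mathbb{B}^{m+1},\mathbb{B}^m)^q] \lesssim 2^{-m q \theta'}$, and Borel--Cantelli gives almost-sure $p$-variation convergence of $(\mathbb{B}^m)$ to a limit $\mathbb{B}$. Chen's identity and the shuffle relations are closed under $p$-variation convergence, so $\mathbb{B}$—and hence $\mathbb{M} = L_*\mathbb{B}$—is a geometric $p$-rough path above $M^H(a)$; independence of the approximation follows because the very same telescoping estimate shows that any two admissible approximating sequences differ by an amount that vanishes in the $p$-variation limit.

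The step I expect to be the main obstacle is the uniform control of the \emph{mixed} iterated integrals and, above all, their dyadic increments at level three, where three possibly distinct Hurst parameters interact. Although independence makes each such term individually more regular than its diagonal counterpart, the conditioning argument must be organized so that the nested $2$D Young integrals against the different covariances $R^{H_{k_i}}$ are estimated with the \emph{correct} joint exponent $2(H_{k_1} + H_{k_2} + H_{k_3})$ and with a genuine geometric gain $2^{-m\theta}$ in the mesh—rather than a mere $O(1)$ bound—uniformly over $(s,t)$. Tracking this gain through the nested integrals, and verifying that $\theta$ stays strictly positive all the way down to the threshold $\min H_k = \frac14$, is the delicate part.
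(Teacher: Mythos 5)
Your proposal is correct in outline and its core estimates coincide with the paper's, but it is organized along a genuinely different route. The paper works directly with $M^m=\sum_k a_k B^{H_k,m}$: it expands $\mathbb{M}^{m,(2)}$ into diagonal L\'evy areas (handled by the Coutin--Qian estimates) plus cross terms $\int_s^t (B_u^{H_i,m}-B_s^{H_i,m})\,dB_u^{H_j,m}$ (handled by independence and Young's inequality, since $H_i+H_j>\tfrac12$), then runs Garsia--Rodemich--Rumsey for the uniform $p$-variation bound (Proposition~\ref{prop:pvar}) and a hypercontractivity-plus-sewing argument for the Cauchy property (Proposition~\ref{prop:cauchy}). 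You instead lift the $\mathbb{R}^N$-valued process $\mathbf{B}=(B^{H_1},\dots,B^{H_N})$, whose covariance is block-diagonal by independence, and push forward under the linear map $L$. This buys you a one-line proof that the limit is geometric and lies above $M^H(a)$ (functoriality of $L_*$ and its Lipschitz continuity in $d_p$), a cleaner derivation of the independence of the approximation, and the same diagonal/mixed dichotomy, now indexed by multi-indices at every tensor level; the analytic content is identical, namely Coutin--Qian on the diagonal and conditioning plus two-dimensional integration against the product covariances off the diagonal. One respect in which your plan is actually more complete than the paper's argument: since $\min\{H_k\}$ may lie in $(\tfrac14,\tfrac13]$, one must take $p\in(3,4)$ and control the \emph{third} tensor level, including its mixed terms and their dyadic increments, whereas Lemma~\ref{lem:moments}, Proposition~\ref{prop:pvar} and Proposition~\ref{prop:cauchy} only ever discuss levels one and two; you correctly identify the level-three mixed terms, and the geometric gain $2^{-m\theta}$ in the telescoping estimate, as the crux. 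That estimate is the one piece you defer rather than prove, so what you have is a sound plan rather than a finished proof; but the plan is viable, being essentially the independent-components instance of the Friz--Victoir two-dimensional $\rho$-variation framework with $\rho=\max_k (2H_k)^{-1}<2$.
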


\subsection{Preliminary Estimates}

We begin by establishing key estimates for the covariance structure of GMFBM increments.

\begin{Lemma}[Covariance bound for mixed increments]\label{lem:covariance}
Let $0 \leq s \leq t \leq u \leq v \leq T$. Then for any $i,j \in \{1,\ldots,N\}$, we have:
\[
\left| \mathbb{E}[(B_t^{H_i} - B_s^{H_i})(B_v^{H_j} - B_u^{H_j})] \right| \lesssim |t-s|^{H_i}|v-u|^{H_j} \left( \frac{|t-u|}{|t-s| \vee |v-u|} \right)^{H_i + H_j - 2}
\]
when $|t-u| \geq |t-s| \vee |v-u|$, and is zero otherwise.
\end{Lemma}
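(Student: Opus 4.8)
The plan is to reduce everything to the single-component estimate and then exploit the explicit covariance formula for a single fractional Brownian motion. First I would dispose of the off-diagonal case: for $i \neq j$ the driving processes $B^{H_i}$ and $B^{H_j}$ are independent and centered, so $\E[(B_t^{H_i}-B_s^{H_i})(B_v^{H_j}-B_u^{H_j})] = 0$ and the asserted bound holds trivially (the left-hand side being identically zero). The content of the lemma therefore lies entirely in the diagonal case $i=j=:k$.

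For the diagonal case I would introduce the interval lengths $h_1 = t-s$, $h_2 = v-u$ and the gap $g = u-t = |t-u|$, so that the separation hypothesis reads $g \geq h_1 \vee h_2 =: L$. Specializing the covariance-of-increments formula recalled in Section~\ref{sec:preliminaries} to a single fractional Brownian motion gives
\[
2\,\E[(B_t^{H_k}-B_s^{H_k})(B_v^{H_k}-B_u^{H_k})] = f(g+h_1+h_2) + f(g) - f(g+h_2) - f(g+h_1),
\]
where $f(x) = x^{2H_k}$. The right-hand side is exactly the mixed second difference of $f$ in the increments $h_1$ and $h_2$ based at $g$, which I would rewrite in the integral form
\[
2\,\E\big[(B_t^{H_k}-B_s^{H_k})(B_v^{H_k}-B_u^{H_k})\big] = \int_0^{h_1}\!\!\int_0^{h_2} f''(g+x+y)\,dy\,dx, \qquad f''(z) = 2H_k(2H_k-1)\,z^{2H_k-2}.
\]

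The decisive step is a monotonicity bound: since $2H_k - 2 < 0$ and $g+x+y \geq g$ throughout the domain of integration, one has $|f''(g+x+y)| \leq 2H_k|2H_k-1|\,g^{2H_k-2}$, and integrating out yields the ``dimensional'' estimate
\[
\big|\E[(B_t^{H_k}-B_s^{H_k})(B_v^{H_k}-B_u^{H_k})]\big| \lesssim h_1 h_2\, g^{2H_k-2}
\]
(with both sides vanishing when $H_k = \tfrac12$, consistently with independence of Brownian increments). To match the homogeneous form of the statement I would insert the separation hypothesis through the elementary inequality $h_1 h_2 \leq L^2$: since $1-H_k > 0$ this gives $(h_1 h_2)^{1-H_k} \leq L^{2-2H_k}$, and factoring $h_1 h_2 = h_1^{H_k} h_2^{H_k}(h_1 h_2)^{1-H_k}$ converts the bound into
\[
h_1 h_2\, g^{2H_k-2} \leq h_1^{H_k} h_2^{H_k}\, g^{2H_k-2} L^{2-2H_k} = h_1^{H_k} h_2^{H_k}\left(\frac{g}{L}\right)^{2H_k-2},
\]
which is precisely the claimed right-hand side with $i=j=k$.

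I expect the only genuinely delicate point to be this final exponent-matching step: the second-difference argument naturally produces the estimate $h_1 h_2\, g^{2H_k-2}$, and one must invoke the separation hypothesis $g \geq L$ at exactly the right place to recast it in the scale-invariant form $h_1^{H_k} h_2^{H_k}(g/L)^{2H_k-2}$ demanded by the lemma. I would favor the double-integral representation over iterated mean-value-theorem arguments because it makes the sign and size of $f''$ transparent and degenerates gracefully at the critical value $H_k = \tfrac12$.
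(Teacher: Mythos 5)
Your proposal is correct and follows the same overall decomposition as the paper: independence kills the off-diagonal terms, and the diagonal terms reduce to the single-fBm covariance estimate. The difference is that the paper simply cites the Coutin--Qian bound for that diagonal estimate, whereas you derive it from scratch by writing $2\,\E[(B_t^{H_k}-B_s^{H_k})(B_v^{H_k}-B_u^{H_k})]$ as the mixed second difference of $x\mapsto x^{2H_k}$ and representing it as $\int_0^{h_1}\int_0^{h_2} f''(g+x+y)\,dy\,dx$; the monotonicity of $|f''|$ then gives $h_1h_2\,g^{2H_k-2}$, and your factorization $h_1h_2=h_1^{H_k}h_2^{H_k}(h_1h_2)^{1-H_k}\le h_1^{H_k}h_2^{H_k}L^{2-2H_k}$ recovers the scale-invariant form. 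This is a genuine improvement in self-containedness, and your observation that the estimate degenerates correctly at $H_k=\tfrac12$ is a good sanity check. The one part of the statement neither you nor the paper actually establishes is the final clause ``and is zero otherwise'': for $i=j$ and $H_k\neq\tfrac12$ the covariance of nearby increments is certainly not zero, and the paper's own ``Case 3'' tacitly concedes this by claiming only that it is ``of lower order.'' You should either restrict your lemma to the separated regime (which is all your integral representation needs, since it requires $g>0$) or replace the ``zero otherwise'' claim with an explicit bound, e.g.\ the Cauchy--Schwarz estimate $|t-s|^{H_k}|v-u|^{H_k}$, for the non-separated case.
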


\begin{proof}
We consider three cases:

\textbf{Case 1: $i = j$}. For the same fractional Brownian motion component, we use the classical covariance estimate for fBm \cite[Proposition 2.1]{coutin2002}:
\begin{align*}
&\left| \mathbb{E}[(B_t^{H_i} - B_s^{H_i})(B_v^{H_i} - B_u^{H_i})] \right| \\
&= \frac{1}{2} \left( |t-v|^{2H_i} + |s-u|^{2H_i} - |t-u|^{2H_i} - |s-v|^{2H_i} \right) \\
&\lesssim |t-s|^{H_i}|v-u|^{H_i} \left( \frac{|t-u|}{|t-s| \vee |v-u|} \right)^{2H_i - 2}.
\end{align*}

\textbf{Case 2: $i \neq j$}. For different components, since $B^{H_i}$ and $B^{H_j}$ are independent, we have:
\[
\mathbb{E}[(B_t^{H_i} - B_s^{H_i})(B_v^{H_j} - B_u^{H_j})] = \mathbb{E}[B_t^{H_i} - B_s^{H_i}] \cdot \mathbb{E}[B_v^{H_j} - B_u^{H_j}] = 0.
\]

\textbf{Case 3: Boundary cases}. When $|t-u| < |t-s| \vee |v-u|$, the intervals $[s,t]$ and $[u,v]$ are too close for the scaling argument to apply directly, but in this case the covariance is of lower order and can be absorbed into the constant.
\end{proof}

\begin{Lemma}[Mixed second moment estimate]\label{lem:moments}
For $0 \leq s \leq t$, let $\Delta M = M_t^H(a) - M_s^H(a)$. Then:
\[
\mathbb{E}[|\Delta M|^2] = \sum_{k=1}^N a_k^2 |t-s|^{2H_k} \lesssim |t-s|^{2\min\{H_k\}}.
\]
Moreover, for the second-order process, we have:
\[
\mathbb{E}[|\mathbb{M}_{s,t}^{m,(2)}|^2] \lesssim |t-s|^{4\min\{H_k\}}.
\]
\end{Lemma}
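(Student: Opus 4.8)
The plan is to dispatch the two claims in turn. The first equality $\mathbb{E}[|\Delta M|^2]=\sum_{k}a_k^2|t-s|^{2H_k}$ is nothing but the increment identity already recorded in the Increment properties proposition, so only the inequality needs an argument. Writing $H_{\min}=\min_k H_k$ and using $|t-s|\le T$, each summand satisfies
\[
|t-s|^{2H_k}=|t-s|^{2H_{\min}}\,|t-s|^{2(H_k-H_{\min})}\le T^{2(H_k-H_{\min})}\,|t-s|^{2H_{\min}},
\]
because the exponent $2(H_k-H_{\min})\ge0$ makes $r\mapsto r^{2(H_k-H_{\min})}$ nondecreasing on $[0,T]$; summing over the finitely many $k$ absorbs the constants $a_k^2T^{2(H_k-H_{\min})}$ into the implied constant.

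For the second claim I would exploit that $M^H(a)$ is real-valued, so for the smooth path $M^m$ the second level collapses to a square,
\[
\mathbb{M}^{m,(2)}_{s,t}=\int_s^t (M^m_u-M^m_s)\,dM^m_u=\tfrac12\,(M^m_t-M^m_s)^2 .
\]
Since $M^m_t-M^m_s$ is a centered Gaussian (a linear interpolation of the jointly Gaussian field $(B^{H_1},\dots,B^{H_N})$), the Gaussian fourth-moment identity gives $\mathbb{E}[|\mathbb{M}^{m,(2)}_{s,t}|^2]=\tfrac34\big(\mathbb{E}[(M^m_t-M^m_s)^2]\big)^2$, so everything reduces to the uniform-in-$m$ increment bound $\mathbb{E}[(M^m_t-M^m_s)^2]\lesssim|t-s|^{2H_{\min}}$. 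On a single dyadic cell $[t^m_{l-1},t^m_l]$ one has $M^m_t-M^m_s=\theta\,(M_{t^m_l}-M_{t^m_{l-1}})$ with $\theta=(t-s)/(2^{-m}T)\in[0,1]$, so by Part 1 applied at scale $2^{-m}T$,
\[
\mathbb{E}[(M^m_t-M^m_s)^2]=\theta^2\,\mathbb{E}[(M_{t^m_l}-M_{t^m_{l-1}})^2]\lesssim\theta^2\,(2^{-m}T)^{2H_{\min}}\le(t-s)^{2H_{\min}},
\]
where the last inequality uses $\theta^{2-2H_{\min}}\le1$; telescoping across cells for general $s,t$ keeps the constant independent of $m$, and squaring delivers $|t-s|^{4H_{\min}}$.

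The genuine difficulty lies in this uniform-in-$m$ control: one must verify that passing to the piecewise-linear interpolant does not inflate the $L^2$ modulus of continuity as $m\to\infty$. I would therefore also present the robust component-wise route, which survives a vector-valued reformulation and is the one in which Lemma \ref{lem:covariance} is genuinely used. Expanding $M^m=\sum_k a_k B^{H_k,m}$ yields $\mathbb{M}^{m,(2)}_{s,t}=\sum_{i,j}a_ia_j\,I^m_{ij}$ with $I^m_{ij}=\int_s^t (B^{H_i,m}_u-B^{H_i,m}_s)\otimes dB^{H_j,m}_u$: the diagonal terms are bounded by $|t-s|^{4H_i}$ through the single-fBm estimate underlying Theorem \ref{thm:coutin-qian}, while for $i\neq j$ independence factorizes the second moment,
\[
\mathbb{E}[|I^m_{ij}|^2]=\int_s^t\!\!\int_s^t \mathbb{E}\big[(B^{H_i,m}_u-B^{H_i,m}_s)(B^{H_i,m}_v-B^{H_i,m}_s)\big]\,\mathbb{E}\big[\dot B^{H_j,m}_u\,\dot B^{H_j,m}_v\big]\,du\,dv .
\]
The second kernel, the covariance of the discretized derivative, is singular as $m\to\infty$ (its diagonal blocks blow up), so the target bound $\mathbb{E}[|I^m_{ij}|^2]\lesssim|t-s|^{2(H_i+H_j)}$ must be obtained uniformly in $m$ by splitting $[s,t]^2$ into dyadic blocks, applying the increment-covariance scaling of Lemma \ref{lem:covariance} off the diagonal and Part 1 on it, and summing a geometric series in the block index; this summation converges precisely when $H_i+H_j>\tfrac12$, which is guaranteed by $H_{\min}>\tfrac14$. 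Since both $4H_i$ and $2(H_i+H_j)$ dominate $4H_{\min}$, the $|t-s|\le T$ device of Part 1 again collapses the finitely many contributions onto $|t-s|^{4H_{\min}}$, identifying this uniform cross-term estimate as the main obstacle.
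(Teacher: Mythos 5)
Your proof is correct, and the first part coincides with the paper's argument (your explicit factor $T^{2(H_k-H_{\min})}$ is actually a welcome clarification, since $|t-s|^{2H_k}\le|t-s|^{2H_{\min}}$ fails for $|t-s|>1$ and the paper silently absorbs this into the constant). For the second part you diverge from the paper in two ways. First, your scalar shortcut $\mathbb{M}^{m,(2)}_{s,t}=\tfrac12(M^m_t-M^m_s)^2$ plus the Gaussian fourth-moment identity is valid for the one-dimensional process of Definition \ref{def:gmfbm}, and your single-cell/telescoping argument for the uniform-in-$m$ increment bound is sound; but in that setting the second level carries no Lévy area and the lemma is trivial, so this route cannot be the main argument --- the estimate is only needed for the vector-valued (tensor) second level, which is the case the paper implicitly treats. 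Second, for that case the paper uses the same diagonal-plus-cross decomposition as you, but bounds the cross terms pathwise by a Young estimate $|I_{ij}|\le C\|B^{H_i,m}\|_{H_i\text{-H\"older}}\|B^{H_j,m}\|_{H_j\text{-H\"older}}|t-s|^{H_i+H_j}$ before taking expectations, whereas you compute $\mathbb{E}[|I^m_{ij}|^2]$ directly as a double integral against the two covariance kernels and sum over dyadic blocks using Lemma \ref{lem:covariance}, with convergence exactly when $H_i+H_j>\tfrac12$. Your route is the more robust one: as you point out, the $H_i$-H\"older norms of the dyadic interpolants are not uniformly bounded in $m$ (their second moments grow logarithmically), so the paper's pathwise bound does not by itself yield a constant independent of $m$, while your direct $L^2$ computation is the standard Coutin--Qian mechanism and does. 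The block-summation step is only sketched, but at a level of detail comparable to the paper's own proof; carried out, it closes the one genuine gap in the published argument.
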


\begin{proof}
\textbf{First moment estimate:}
By the independence of components and the fBm increment property:
\begin{align*}
\mathbb{E}[|\Delta M|^2] &= \mathbb{E}\left[ \left( \sum_{k=1}^N a_k (B_t^{H_k} - B_s^{H_k}) \right)^2 \right] \\
&= \sum_{k=1}^N a_k^2 \mathbb{E}[|B_t^{H_k} - B_s^{H_k}|^2] + 2 \sum_{1 \leq i < j \leq N} a_i a_j \mathbb{E}[(B_t^{H_i} - B_s^{H_i})(B_t^{H_j} - B_s^{H_j})] \\
&= \sum_{k=1}^N a_k^2 |t-s|^{2H_k} + 0 \quad \text{(by independence)} \\
&\leq \left( \sum_{k=1}^N a_k^2 \right) \cdot |t-s|^{2\min\{H_k\}} = C |t-s|^{2\min\{H_k\}}.
\end{align*}

\textbf{Second moment estimate:}
The second level process decomposes as:
\[
\mathbb{M}_{s,t}^{m,(2)} = \sum_{k=1}^N a_k^2 \mathbb{B}_{s,t}^{H_k,m,(2)} + \sum_{1 \leq i < j \leq N} a_i a_j \left( I_{ij} + I_{ji} \right)
\]
where $I_{ij} = \int_s^t (B_u^{H_i,m} - B_s^{H_i,m}) dB_u^{H_j,m}$.

For the diagonal terms, from \cite[Theorem 2]{coutin2002}, we have:
\[
\mathbb{E}[|\mathbb{B}_{s,t}^{H_k,m,(2)}|^2] \lesssim |t-s|^{4H_k}.
\]

For the cross terms, we use the Young integral isometry:
\[
|I_{ij}| \leq C \|B^{H_i,m}\|_{H_i\text{-H\"older}} \|B^{H_j,m}\|_{H_j\text{-H\"older}} |t-s|^{H_i + H_j}.
\]
Taking expectations and using Gaussian moment estimates:
\[
\mathbb{E}[|I_{ij}|^2] \lesssim |t-s|^{2(H_i + H_j)}.
\]

Combining all terms:
\begin{align*}
\mathbb{E}[|\mathbb{M}_{s,t}^{m,(2)}|^2] &\lesssim \sum_{k=1}^N |t-s|^{4H_k} + \sum_{1 \leq i < j \leq N} |t-s|^{2(H_i + H_j)} \\
&\leq N^2 \cdot |t-s|^{4\min\{H_k\}} = C |t-s|^{4\min\{H_k\}}.
\end{align*}
\end{proof}

\subsection{Convergence and Geometric Properties}

\begin{Proposition}[Uniform $p$-variation bound]\label{prop:pvar}
There exists a constant $C > 0$ such that for all $m \geq 1$ and all partitions $\mathcal{D}$ of $[0,T]$:
\[
\sum_{[u,v] \in \mathcal{D}} |M_v^m - M_u^m|^p \leq C \quad \text{and} \quad \sum_{[u,v] \in \mathcal{D}} |\mathbb{M}_{u,v}^{m,(2)}|^{p/2} \leq C
\]
almost surely, for any $p > \frac{1}{\min\{H_k\}}$.
\end{Proposition}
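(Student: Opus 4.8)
The plan is to deduce both uniform bounds from the second-moment estimates of \Cref{lem:moments} by first upgrading them to arbitrary-order moments through Gaussian hypercontractivity, and then feeding these into a Kolmogorov / Garsia--Rodemich--Rumsey type variation criterion whose constants depend only on the moment data. The guiding principle is that the worst regularity scale $\min\{H_k\}$ controls both levels, and that passing to the second level trades Hölder exponent $\min\{H_k\}$ for exponent $2\min\{H_k\}$ while simultaneously replacing $p$-variation by $p/2$-variation, so that the single threshold $p>1/\min\{H_k\}$ governs both sums.

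First I would record that the estimates of \Cref{lem:moments} hold \emph{uniformly in $m$} for the dyadic interpolants themselves, i.e. for all $0\le s\le t\le T$,
\[
\mathbb{E}\big[|M_t^m-M_s^m|^2\big]\lesssim |t-s|^{2\min\{H_k\}},\qquad \mathbb{E}\big[|\mathbb{M}_{s,t}^{m,(2)}|^2\big]\lesssim |t-s|^{4\min\{H_k\}},
\]
with constants independent of $m$. For the first level this is a short check that piecewise-linear interpolation does not degrade the scaling: on an interval $[u,v]$ contained in a single dyadic cell $[t_{l-1}^m,t_l^m]$ the increment $M_v^m-M_u^m$ is the fraction $\tfrac{v-u}{2^{-m}T}$ of the cell increment, and since $\min\{H_k\}<1$ one has $\big(\tfrac{v-u}{2^{-m}T}\big)^2(2^{-m}T)^{2\min\{H_k\}}\le |v-u|^{2\min\{H_k\}}$; intervals spanning several cells are then handled by superadditivity and the covariance decorrelation of \Cref{lem:covariance}. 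The second-level bound is exactly the content of \Cref{lem:moments}, inherited from \cite[Theorem 2]{coutin2002}, and is already $m$-uniform.

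Next I would invoke hypercontractivity. Since $M^m$ takes values in the first Wiener chaos and $\mathbb{M}^{m,(2)}$, as a second-order iterated integral of a Gaussian process, lies in the fixed finite sum $\bigoplus_{j=0}^{2}\mathcal{H}_j$ of Wiener chaoses, the equivalence of $L^q$-norms inside a fixed chaos promotes the $L^2$ bounds to: for every $q\ge 2$,
\[
\mathbb{E}\big[|M_t^m-M_s^m|^q\big]\lesssim_q |t-s|^{q\min\{H_k\}},\qquad \mathbb{E}\big[|\mathbb{M}_{s,t}^{m,(2)}|^q\big]\lesssim_q |t-s|^{2q\min\{H_k\}},
\]
with constants independent of $m$. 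Applying the variation criterion to the first level with $q$ large yields a Hölder exponent approaching $\min\{H_k\}$, hence (via the embedding of Hölder regularity into finite variation) an $L^{q}$-bound on $\sum_{[u,v]\in\mathcal{D}}|M_v^m-M_u^m|^p$ that is uniform in $m$ and $\mathcal{D}$ whenever $p>1/\min\{H_k\}$. For the second level the natural object is the $p/2$-variation, and the exponent $2\min\{H_k\}$ gives finiteness precisely when $p/2>1/(2\min\{H_k\})$, i.e.\ again $p>1/\min\{H_k\}$.

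The main obstacle is twofold. The delicate part is maintaining $m$-uniformity of all constants: one must verify that the interplay between the fine dyadic scale $2^{-m}$ and coarse increments never produces a blow-up, and that the cross terms $I_{ij}$ of \Cref{lem:moments}, which mix distinct regularities $H_i,H_j$, are dominated by the diagonal worst case (using $|t-s|^{H_i+H_j}\le |t-s|^{2\min\{H_k\}}$ for $|t-s|\le 1$). The second, more technical, point is that the criterion must deliver genuine $p$-variation at the sharp threshold rather than merely Hölder regularity, while simultaneously controlling the higher-chaos object $\mathbb{M}^{m,(2)}$. Finally, I would note an interpretive subtlety: what is actually established is a uniform-in-$m$ bound in $L^q$ for every finite $q$, which yields the asserted almost-sure bound with a finite (a priori random) constant; this is the form genuinely used in the convergence argument of \Cref{thm:main} via a Borel--Cantelli summation across scales, a literal single deterministic $C$ being too strong for these unbounded variation functionals.
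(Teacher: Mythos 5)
Your proposal follows essentially the same route as the paper's proof: take the second-moment bounds of Lemma \ref{lem:moments}, upgrade them via Gaussian regularity/hypercontractivity into a Garsia--Rodemich--Rumsey-type pathwise H\"older estimate, and sum over partitions with the exponent bookkeeping $p\min\{H_k\}>1$ governing both levels (the second level trading exponent $2\min\{H_k\}$ against $p/2$-variation). Your added care about $m$-uniformity of the constants, the fixed-chaos structure of $\mathbb{M}^{m,(2)}$, and the observation that the resulting constant is a.s.\ finite but random are points the paper's terser proof glosses over, but the underlying argument is the same.
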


\begin{proof}
\textbf{First level bound:}
From Lemma \ref{lem:moments}, we have $\mathbb{E}[|M_v^m - M_u^m|^2] \lesssim |v-u|^{2\min\{H_k\}}$. Since $M^m$ is Gaussian, by the Garsia-Rodemich-Rumsey lemma \cite[Theorem 2.1.1]{lyons2002}, for any $p > \frac{1}{\min\{H_k\}}$:
\[
|M_v^m - M_u^m| \lesssim |v-u|^{\min\{H_k\} - \frac{1}{p}} \quad \text{almost surely}.
\]
Therefore:
\[
\sum_{[u,v] \in \mathcal{D}} |M_v^m - M_u^m|^p \lesssim \sum_{[u,v] \in \mathcal{D}} |v-u|^{p\min\{H_k\} - 1} \leq T \cdot \sup_{[u,v] \in \mathcal{D}} |v-u|^{p\min\{H_k\} - 1}.
\]

\textbf{Second level bound:}
From Lemma \ref{lem:moments}, $\mathbb{E}[|\mathbb{M}_{u,v}^{m,(2)}|^2] \lesssim |v-u|^{4\min\{H_k\}}$. Again by Gaussian regularity:
\[
|\mathbb{M}_{u,v}^{m,(2)}| \lesssim |v-u|^{2\min\{H_k\} - \frac{2}{p}} \quad \text{almost surely}.
\]
Then:
\[
\sum_{[u,v] \in \mathcal{D}} |\mathbb{M}_{u,v}^{m,(2)}|^{p/2} \lesssim \sum_{[u,v] \in \mathcal{D}} |v-u|^{p\min\{H_k\} - 1} \leq C.
\]
\end{proof}

\begin{Proposition}[Cauchy property]\label{prop:cauchy}
The sequence $(\mathbb{M}^m)_{m \geq 1}$ is Cauchy in the $p$-variation topology almost surely.
\end{Proposition}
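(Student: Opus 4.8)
The plan is to establish the Cauchy property by controlling the $p$-variation distance $d_p(\mathbb{M}^{m+1}, \mathbb{M}^m)$ between consecutive dyadic approximations and showing that these distances are almost surely summable in $m$. Because the dyadic grids are nested, on each level-$m$ interval $[t^m_{l-1}, t^m_l]$ the difference of the two interpolants $M^{m+1} - M^m$ vanishes at both endpoints and equals the second-order increment of $M$ at the inserted midpoint, so it is a tent-shaped correction whose amplitude I expect to decay geometrically. This is the same skeleton as the Coutin-Qian argument \cite{coutin2002}; the genuinely new feature is the presence of several independent fractional components with distinct regularities.

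First I would derive second-moment estimates for the increments of the differences at each level. At the first level $(M^{m+1} - M^m)$ is a centered Gaussian, and using the increment covariances of Lemma \ref{lem:covariance} together with the tent description above, its increments pick up an extra geometric factor relative to the level-wise bound of Lemma \ref{lem:moments}: schematically,
\[
\E\big[\,\big|(M^{m+1} - M^m)_{s,t}\big|^2\,\big] \lesssim 2^{-2m\min\{H_k\}}\,|t-s|^{2\min\{H_k\}}.
\]
At the second level I would reuse the decomposition of $\mathbb{M}^{(2)}$ into diagonal pieces $\sum_k a_k^2\,\mathbb{B}^{H_k,(2)}$ and cross pieces $a_i a_j(I_{ij} + I_{ji})$ from the proof of Lemma \ref{lem:moments}. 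The diagonal differences are handled by the single-fBm estimates of \cite[Theorem 2]{coutin2002}, while the cross differences are bounded through the Young-integral inequality combined with Gaussian moment control.

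Next I would upgrade these $L^2$ bounds to all $L^q$ moments by Gaussian hypercontractivity, since $M^m$ lives in the first Wiener chaos and $\mathbb{M}^{(2)}$ in the sum of the first and second chaos, where all $L^q$ norms are equivalent. A Garsia-Rodemich-Rumsey / Kolmogorov argument, exactly as in Proposition \ref{prop:pvar}, then converts the geometrically decaying moment bounds into almost-sure estimates of the form $d_p(\mathbb{M}^{m+1}, \mathbb{M}^m) \lesssim 2^{-m\beta}$ for some $\beta > 0$ determined by $\min\{H_k\}$ and $p > 1/\min\{H_k\}$. Summing this geometric series and applying the Borel-Cantelli lemma yields almost-sure summability, and hence the Cauchy property in the $p$-variation metric.

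The main obstacle is the second-level cross term. For distinct components the refinement integral $\int_s^t (B^{H_i,m+1} - B^{H_i,m})\,dB^{H_j,m+1}$ pairs an integrand built from the $H_i$-component against an integrator built from the distinct $H_j$-component, so it is not immediate that the worse scale, governed by $\min\{H_k\}$, still produces a decay that survives summation over the $2^m$ dyadic cells. Verifying this is precisely where the threshold $\min\{H_k\} > \tfrac14$ enters: it forces $4\min\{H_k\} > 1$, which is exactly the positivity needed for the second-moment exponent $|t-s|^{4\min\{H_k\}}$ of Lemma \ref{lem:moments} to combine with the dyadic count into a convergent geometric series. The independence of the components, which kills all first-order cross-covariances, is what ultimately keeps these interactions under control.
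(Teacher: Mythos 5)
Your strategy is sound, but it is genuinely different from the paper's. You telescope over consecutive dyadic levels: exploit the nested grids to write $M^{m+1}-M^m$ as a tent-shaped correction supported on each level-$m$ cell, prove second-moment bounds carrying a geometric factor in $m$, upgrade to $L^q$ by hypercontractivity, pass to almost-sure bounds by a Garsia--Rodemich--Rumsey/Kolmogorov argument, and sum via Borel--Cantelli. This is the original Coutin--Qian route \cite{coutin2002}. The paper instead works with the defect of additivity $\delta\mathbb{M}^{m,n}_{s,u,t}$ and invokes hypercontractivity together with the sewing lemma. Your route has a concrete advantage: it makes explicit where the decay in $m$ comes from, whereas the paper's displayed moment bound $\mathbb{E}[|\delta\mathbb{M}^{m,n}_{s,u,t}|^q]\lesssim |t-s|^{q(2\min\{H_k\}+\delta/2)}$ contains no dependence on $m,n$ at all and so, as written, can only deliver uniform boundedness rather than a Cauchy estimate. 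Your decomposition at level two is also the right one: the diagonal L\'evy areas are the genuinely rough objects, covered by the single-fBm estimates, while the cross terms are Young integrals because $H_i+H_j>\tfrac12$ whenever all $H_k>\tfrac14$.

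Two caveats. First, the schematic bound $\mathbb{E}[|(M^{m+1}-M^m)_{s,t}|^2]\lesssim 2^{-2m\min\{H_k\}}|t-s|^{2\min\{H_k\}}$ is too strong: at $|t-s|\sim 2^{-m}$ it predicts $2^{-4m\min\{H_k\}}$, while the tent correction at the inserted midpoint genuinely has variance of order $2^{-2m\min\{H_k\}}$. The correct and sufficient statement is the interpolated one: for every $\gamma<\min\{H_k\}$ there is $\varepsilon>0$ with $\mathbb{E}[|(M^{m+1}-M^m)_{s,t}|^2]\lesssim 2^{-2m\varepsilon}|t-s|^{2\gamma}$, trading a little H\"older regularity for the geometric factor; this still yields $d_p(\mathbb{M}^{m+1},\mathbb{M}^m)\lesssim 2^{-m\beta}$ since $p>1/\min\{H_k\}$ is an open condition. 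Second, the threshold enters the cross terms through $H_i+H_j>\tfrac12$ (the Young regime), not through $4\min\{H_k\}>1$, which is what the worst diagonal L\'evy area requires. Neither point breaks the plan, but the level-two difference estimates with geometric decay are the real content of the proof and still need to be written out.
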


\begin{proof}
We analyze the difference $\mathbb{M}^{m} - \mathbb{M}^{n}$.

\textbf{First level:} Since $M_t^m$ converges uniformly to $M_t$:
\[
\sup_{t \in [0,T]} |M_t^m - M_t^n| \to 0 \quad \text{as } m,n \to \infty.
\]

\textbf{Second level:} Consider the defect of additivity:
\[
\delta\mathbb{M}_{s,u,t}^{m,n} = \mathbb{M}_{s,t}^{m,(2)} - \mathbb{M}_{s,t}^{n,(2)} - (\mathbb{M}_{s,u}^{m,(2)} - \mathbb{M}_{s,u}^{n,(2)}) - (\mathbb{M}_{u,t}^{m,(2)} - \mathbb{M}_{u,t}^{n,(2)}).
\]

Using Gaussian hypercontractivity \cite[Theorem 2.7.1]{nualart1995}:
\[
\mathbb{E}[|\delta\mathbb{M}_{s,u,t}^{m,n}|^q] \leq C_q \left( \mathbb{E}[|\delta\mathbb{M}_{s,u,t}^{m,n}|^2] \right)^{q/2} \lesssim |t-s|^{q(2\min\{H_k\} + \delta/2)}.
\]

By the sewing lemma \cite[Lemma 2.4]{lyons2002}, this implies convergence.
\end{proof}

\subsection{Proof of the Main Theorem}

\begin{proof}[Proof of Theorem \ref{thm:main}]
We now complete the proof by combining all previous results.

\textbf{Step 1: Construction and boundedness}
For each $m \geq 1$, define the dyadic approximation $\mathbb{M}^m$ as in Section \ref{sec:preliminaries}. From Proposition \ref{prop:pvar}, the sequence $(\mathbb{M}^m)$ is uniformly bounded in $p$-variation.

\textbf{Step 2: Convergence}
From Proposition \ref{prop:cauchy}, $(\mathbb{M}^m)$ is Cauchy in $p$-variation topology. Since the space of $p$-rough paths is complete \cite[Theorem 3.1.1]{lyons2002}, there exists a limit $\mathbb{M}$ such that:
\[
\lim_{m \to \infty} d_p(\mathbb{M}^m, \mathbb{M}) = 0 \quad \text{almost surely}.
\]

\textbf{Step 3: Rough path properties}
The limit $\mathbb{M}$ satisfies:
\begin{itemize}
\item \textbf{Chen's identity:} $\mathbb{M}_{s,u} = \mathbb{M}_{s,t} \otimes \mathbb{M}_{t,u}$ by continuity of the tensor product
\item \textbf{Finite $p$-variation:} $\|\mathbb{M}\|_{p\text{-var}} \leq \liminf \|\mathbb{M}^m\|_{p\text{-var}} < \infty$
\item \textbf{Geometric nature:} $\mathbb{M}$ is limit of smooth rough paths
\end{itemize}

\textbf{Step 4: Enhanced path}
For the first level:
\[
\mathbb{M}_{0,t}^{(1)} = \lim_{m \to \infty} \mathbb{M}_{0,t}^{m,(1)} = \lim_{m \to \infty} (M_t^m - M_0^m) = M_t,
\]
so $\mathbb{M}$ lies above $M_t^H(a)$.

\textbf{Step 5: Independence of approximation}
For two different dyadic approximations $(\mathbb{M}^m)$ and $(\tilde{\mathbb{M}}^m)$, the interlaced sequence converges, forcing $\mathbb{M} = \tilde{\mathbb{M}}$.

This completes the proof that $\mathbb{M}$ is a geometric rough path above $M_t^H(a)$.
\end{proof}

\subsection{Sharpness and Examples}

\begin{Theorem}[Sharpness of the condition]\label{thm:sharpness}
The condition $\min\{H_k\} > \frac{1}{4}$ is sharp.
\end{Theorem}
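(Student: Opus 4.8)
The plan is to prove sharpness by exhibiting divergence: I will show that whenever $\min\{H_k\}\le\tfrac14$, the second level of the dyadic approximations fails to converge in $L^2$, so that no geometric rough path above $M^H(a)$ can arise as a $p$-variation limit of the $\mathbb{M}^m$ (for any finite $p$), contradicting the conclusion of Theorem \ref{thm:main} and thereby confirming that the threshold cannot be relaxed. Relabel the components so that $H_1=\min_k H_k\le\tfrac14$ and, using the decomposition from Lemma \ref{lem:moments},
\[
\mathbb{M}^{m,(2)}_{s,t} = \sum_{k=1}^N a_k^2\,\mathbb{B}^{H_k,m,(2)}_{s,t} + \sum_{1\le i<j\le N} a_i a_j\bigl(I_{ij}^m + I_{ji}^m\bigr),
\]
the strategy is to isolate the critical diagonal term $a_1^2\,\mathbb{B}^{H_1,m,(2)}$ and prove that the remaining terms cannot cancel its blow-up.

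First I would record the orthogonality that independence supplies, evaluated over the full interval $[0,T]$. Since the dyadic approximation matches the path at the endpoints, the symmetric part of each self-area $\mathbb{B}^{H_k,m,(2)}_{0,T}$ equals $\tfrac12(B^{H_k}_T)^{\otimes 2}$ independently of $m$; hence every difference $\mathbb{B}^{H_k,m,(2)}_{0,T}-\mathbb{B}^{H_k,n,(2)}_{0,T}$ is the pure antisymmetric area increment, a mean-zero element of the second Wiener chaos of $B^{H_k}$ alone. Conditioning on $B^{H_1}$ and using $\mathbb{E}[B^{H_j}_v-B^{H_j}_u\mid B^{H_1}]=0$ for $j\neq 1$, one checks that each cross term $I_{ij}^m$ (including those with an index equal to $1$) and each off-diagonal self-area difference is $L^2$-orthogonal to $\mathbb{B}^{H_1,m,(2)}_{0,T}-\mathbb{B}^{H_1,n,(2)}_{0,T}$. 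Consequently, for all $m,n$,
\[
\mathbb{E}\bigl[\bigl|\mathbb{M}^{m,(2)}_{0,T}-\mathbb{M}^{n,(2)}_{0,T}\bigr|^2\bigr] \;\ge\; a_1^4\,\mathbb{E}\bigl[\bigl|\mathbb{B}^{H_1,m,(2)}_{0,T}-\mathbb{B}^{H_1,n,(2)}_{0,T}\bigr|^2\bigr].
\]

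Next I would invoke the sharpness direction of the single-fractional theory: Coutin and Qian \cite{coutin2002} show that the dyadic Lévy area of one fractional Brownian motion converges in $L^2$ precisely when $H>\tfrac14$, while for $H\le\tfrac14$ the summed dyadic variance of the area increments diverges, so $(\mathbb{B}^{H_1,m,(2)}_{0,T})_m$ is not Cauchy in $L^2$. Combined with the displayed lower bound, this forces the mixed sequence $(\mathbb{M}^{m,(2)}_{0,T})_m$ to be non-Cauchy in $L^2$: there exist $\varepsilon>0$ and arbitrarily large $m,n$ with $\mathbb{E}[|\mathbb{M}^{m,(2)}_{0,T}-\mathbb{M}^{n,(2)}_{0,T}|^2]\ge\varepsilon$. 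To conclude, I would note that $p$-variation convergence of the $\mathbb{M}^m$ would in particular force convergence in probability of the endpoint value $\mathbb{M}^{m,(2)}_{0,T}$; since these values live in a fixed finite sum of Wiener chaoses (orders $0$ and $2$), hypercontractivity upgrades convergence in probability to $L^2$-convergence, contradicting the non-Cauchy property. Hence no geometric rough path can be produced once $\min\{H_k\}\le\tfrac14$, which is exactly the claimed sharpness.

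The main obstacle I anticipate is the divergence input of the third step, namely the genuine $L^2$-blow-up of the single-fBm dyadic area at and below the threshold; rather than reprove it I would quote the quantitative area estimate underlying \cite{coutin2002}. The delicate point is the orthogonality in the second step: one must verify that the higher-regularity components ($H_k>H_1$) and all mixed cross terms genuinely add in $L^2$ rather than interfere, and the cleanest route is the chaos/conditioning argument above, which shows that distinct independent fractional noises produce $L^2$-orthogonal iterated-integral contributions. A secondary subtlety is the passage from $L^2$-divergence to non-existence of a $p$-variation limit, for which I rely on the equivalence of all modes of convergence on a fixed Wiener chaos.
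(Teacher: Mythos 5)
Your proposal is correct and follows the same route as the paper: reduce sharpness to the known divergence of the dyadic L\'evy area for the single fBm component with $H\le\frac14$ from Coutin--Qian, and conclude that the mixed second level cannot converge. The paper's own proof is essentially a one-line citation of that divergence, whereas you additionally supply the needed orthogonality/conditioning step (the independent higher-regularity self-areas and all cross terms are $L^2$-orthogonal to the critical antisymmetric difference $\mathbb{B}^{H_1,m,(2)}_{0,T}-\mathbb{B}^{H_1,n,(2)}_{0,T}$, so no cancellation can occur) together with the finite-chaos argument upgrading convergence in probability to $L^2$; this makes rigorous exactly the point the paper leaves implicit rather than taking a different approach.
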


\begin{proof}
If $\min\{H_k\} \leq \frac{1}{4}$, then for the component with smallest Hurst parameter, the second level diverges \cite[Theorem 2]{coutin2002}, preventing convergence of the full rough path.
\end{proof}

\begin{Example}[Two-component MFBM]
For $M_t = aB_t^{H} + bB_t^{K}$ with $H, K > \frac{1}{4}$, the geometric rough path exists with second level:
\[
\mathbb{M}_{s,t}^{(2)} = a^2 \mathbb{B}_{s,t}^{H,(2)} + b^2 \mathbb{B}_{s,t}^{K,(2)} + ab\left( \int_s^t (B_u^{H} - B_s^{H}) dB_u^{K} + \int_s^t (B_u^{K} - B_s^{K}) dB_u^{H} \right).
\]
\end{Example}

\section{Skorohod Integral Representation}\label{sec:skorohod}

In this section, we establish a Skorohod integral representation for the geometric rough path constructed above the generalized mixed fractional Brownian motion. This representation connects our pathwise construction to the Malliavin calculus framework and provides additional insights into the probabilistic structure of the rough path.

\subsection{Malliavin Calculus Framework}

We begin by recalling essential elements of Malliavin calculus for fractional Brownian motion \cite{nualart1995,gubinelli2015}. Let $(\Omega, \mathcal{F}, \mathbb{P})$ be the complete probability space supporting our GMFBM $M_t^H(a) = \sum_{k=1}^N a_k B_t^{H_k}$.

\begin{Definition}[Cameron-Martin spaces \cite{nualart1995}]
For each Hurst parameter $H_k$, let $\mathcal{H}_{H_k}$ be the reproducing kernel Hilbert space of $B^{H_k}$, defined as the completion of the space of step functions with respect to the inner product:
\[
\langle \mathbf{1}_{[0,t]}, \mathbf{1}_{[0,s]} \rangle_{\mathcal{H}_{H_k}} = R_{H_k}(t,s) = \frac{1}{2}(t^{2H_k} + s^{2H_k} - |t-s|^{2H_k}).
\]
The combined Cameron-Martin space for the GMFBM is:
\[
\mathcal{H} = \bigoplus_{k=1}^N \mathcal{H}_{H_k}.\]
\end{Definition}

\begin{Definition}[Multiple Wiener integrals \cite{nualart1995}]
For each $B^{H_k}$, we denote by $I_n^{H_k}$ the multiple Wiener integral of order $n$ with respect to $B^{H_k}$. Due to the independence of the components, the Wiener chaos decomposition of $L^2(\Omega)$ is given by:
\[
L^2(\Omega) = \bigoplus_{n_1,\ldots,n_N \geq 0} \mathcal{H}_{n_1,\ldots,n_N},
\]
where $\mathcal{H}_{n_1,\ldots,n_N}$ is the closed linear subspace generated by products of multiple integrals $\prod_{k=1}^N I_{n_k}^{H_k}(f_k)$ with $f_k \in \mathcal{H}_{H_k}^{\otimes n_k}$.
\end{Definition}

\begin{Definition}[Skorohod integral \cite{nualart1995}]
The Skorohod integral $\delta(u)$ of a stochastic process $u$ in the domain Dom $\delta$ is defined as the adjoint of the Malliavin derivative $D$. For adapted processes, the Skorohod integral coincides with the Itô integral.
\end{Definition}

\subsection{Skorohod Representation of the Rough Path}

We now present the main result of this section, which provides an explicit Skorohod integral representation for the geometric rough path above GMFBM.

\begin{Theorem}[Skorohod representation]\label{thm:skorohod}
Let $\mathbb{M} = (1, \mathbb{M}^{(1)}, \mathbb{M}^{(2)})$ be the geometric rough path above the GMFBM $M_t^H(a)$ with $\min\{H_k\} > \frac{1}{4}$. Then for any $0 \leq s \leq t \leq T$, the second level process admits the following representation:
\[
\mathbb{M}_{s,t}^{(2), i,j} = \frac{1}{2} M_{s,t}^{(1), i} M_{s,t}^{(1), j} + \frac{1}{2} \sum_{k=1}^N a_k^2 I_2^{H_k}(\mathbf{1}_{[s,t]}^{\otimes 2}) \delta_{ij} + A_{s,t}^{i,j},
\]
where the cross terms $A_{s,t}^{i,j}$ are given by:
\[
A_{s,t}^{i,j} = \sum_{1 \leq k \neq l \leq N} a_k a_l \left( \int_s^t (B_u^{H_k,i} - B_s^{H_k,i}) \delta B_u^{H_l,j} + \int_s^t (B_u^{H_l,j} - B_s^{H_l,j}) \delta B_u^{H_k,i} \right),
\]
and the integrals are interpreted as Skorohod integrals \cite{nualart1995}.
\end{Theorem}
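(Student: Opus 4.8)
The plan is to decompose the second-level rough path $\mathbb{M}_{s,t}^{(2)}$ into diagonal and off-diagonal pieces according to the components of the GMFBM, and then translate each piece from its pathwise (Young/Stratonovich) form into Skorohod form using the standard relation between symmetric pathwise integrals and Skorohod integrals on each Wiener chaos. Recall from the Example in Section~\ref{sec:existence} that the second level splits as
\[
\mathbb{M}_{s,t}^{(2),i,j} = \sum_{k=1}^N a_k^2\,\mathbb{B}_{s,t}^{H_k,(2),i,j} + \sum_{1\le k\neq l\le N} a_k a_l \int_s^t (B_u^{H_k,i}-B_s^{H_k,i})\,dB_u^{H_l,j},
\]
where the integrals are the limiting (geometric) iterated integrals coming from Theorem~\ref{thm:main}. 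The diagonal terms $\mathbb{B}_{s,t}^{H_k,(2),i,j}$ are the iterated integrals of a single fBm, and the cross terms couple two independent components.

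First I would treat the diagonal terms. Since $\mathbb{M}$ is geometric, the symmetric part of $\mathbb{B}_{s,t}^{H_k,(2),i,j}$ is forced by the first-level increments: the shuffle relation gives $\mathbb{B}_{s,t}^{H_k,(2),i,j} + \mathbb{B}_{s,t}^{H_k,(2),j,i} = (B_t^{H_k,i}-B_s^{H_k,i})(B_t^{H_k,j}-B_s^{H_k,j})$. Taking $i=j$ and combining across components produces the term $\tfrac12 M_{s,t}^{(1),i}M_{s,t}^{(1),j}$ once summed with the cross-term symmetric parts. The genuinely chaos-two contribution of a single fBm, namely the Stratonovich iterated integral $\int_s^t (B_u^{H_k}-B_s^{H_k})\,dB_u^{H_k}$, is expressed via the standard identity relating the symmetric (Stratonovich) integral to the Skorohod integral plus a trace correction; on the second Wiener chaos this is precisely $\tfrac12 I_2^{H_k}(\mathbf 1_{[s,t]}^{\otimes 2})$, which accounts for the $\delta_{ij}$ term in the statement. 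Here I would invoke the chaos expansion from the Multiple Wiener integrals definition and the fact that the squared increment $(B_t^{H_k}-B_s^{H_k})^2$ decomposes as $I_2^{H_k}(\mathbf 1_{[s,t]}^{\otimes 2}) + \|\mathbf 1_{[s,t]}\|_{\mathcal H_{H_k}}^2$.

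Next I would handle the cross terms. For $k\neq l$ the two fBms are independent, so the pathwise integral $\int_s^t (B_u^{H_k,i}-B_s^{H_k,i})\,dB_u^{H_l,j}$ involves an integrand measurable with respect to $B^{H_k}$ and an integrator $B^{H_l}$ in a different independent factor. Because the integrand carries zero Malliavin derivative in the $B^{H_l}$ direction, the Stratonovich and Skorohod integrals against $B^{H_l}$ coincide (no trace correction arises between independent chaoses), yielding exactly the Skorohod integral $\int_s^t (B_u^{H_k,i}-B_s^{H_k,i})\,\delta B_u^{H_l,j}$. Symmetrizing over the roles of $k$ and $l$ and reinserting the coefficients $a_k a_l$ gives the stated form of $A_{s,t}^{i,j}$. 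The main work is to justify that the geometric limit rough path's cross integral actually agrees with the Skorohod integral; I would do this by passing the identity through the dyadic approximations $M^m$, where all integrals are classical Riemann–Stieltjes integrals and the Stratonovich–Skorohod correction is explicit and finite, and then taking $m\to\infty$ using the $L^2$ convergence guaranteed by Lemma~\ref{lem:moments} together with the $p$-variation convergence of Theorem~\ref{thm:main}.

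The hard part will be controlling the trace/correction terms uniformly in the approximation and confirming they converge to the claimed Hilbert-space norms $\|\mathbf 1_{[s,t]}\|_{\mathcal H_{H_k}}^2$ rather than diverging; this is exactly where the constraint $\min\{H_k\}>\tfrac14$ is needed, since for $H_k\le\tfrac14$ the second-chaos object $I_2^{H_k}(\mathbf 1_{[s,t]}^{\otimes 2})$ fails to have the requisite $L^2$ regularity and the Stratonovich correction ceases to be well defined as a limit. I would therefore state the convergence of the correction terms as a lemma (relying on the covariance bound of Lemma~\ref{lem:covariance} and Gaussian hypercontractivity as in Proposition~\ref{prop:cauchy}) and then assemble the diagonal identity, the trace term, and the symmetrized cross terms to obtain the displayed representation.
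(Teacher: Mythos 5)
Your proposal follows essentially the same route as the paper's proof: decompose $\mathbb{M}^{(2)}$ into diagonal and cross terms along the independent fBm components, identify the diagonal terms with the single-fBm Skorohod representation (the $(B_t-B_s)^2 = I_2(\mathbf{1}_{[s,t]}^{\otimes 2}) + \|\mathbf{1}_{[s,t]}\|_{\mathcal{H}}^2$ correction), use independence to equate the pathwise and Skorohod cross integrals without trace correction, and justify everything by passing through the dyadic approximations with $L^2$/hypercontractivity control. Your argument is, if anything, more explicit than the paper's about why the cross terms carry no trace correction (vanishing Malliavin derivative in the integrator's direction), where the paper simply cites the literature.
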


\begin{proof}
We proceed in several steps, adapting the approach of \cite{coutin2002} to the mixed case.

\textbf{Step 1: Dyadic approximation representation}

For the $m$-th dyadic approximation $M_t^m$, the second level process is given by the Young integral:
\[
\mathbb{M}_{s,t}^{m,(2), i,j} = \int_s^t (M_u^{m,i} - M_s^{m,i}) dM_u^{m,j}.
\]

This can be rewritten using the Itô-Stratonovich correction formula \cite[Chapter 1]{nualart1995}:
\[
\mathbb{M}_{s,t}^{m,(2), i,j} = \frac{1}{2} M_{s,t}^{m,(1), i} M_{s,t}^{m,(1), j} + \frac{1}{2} A_{s,t}^{m,i,j},
\]
where $A_{s,t}^{m,i,j}$ is the Lévy area process.

\textbf{Step 2: Chaos decomposition}

Expanding in terms of the independent fBm components, we have:
\[
M_t^{m,i} = \sum_{k=1}^N a_k B_t^{H_k, m,i},
\]
where $B_t^{H_k, m,i}$ denotes the $m$-th dyadic approximation of the $i$-th component of $B^{H_k}$.

The second level process decomposes as:
\[
\mathbb{M}_{s,t}^{m,(2), i,j} = \sum_{k=1}^N a_k^2 \mathbb{B}_{s,t}^{H_k, m,(2), i,j} + \sum_{1 \leq k \neq l \leq N} a_k a_l \int_s^t (B_u^{H_k, m,i} - B_s^{H_k, m,i}) dB_u^{H_l, m,j}.
\]

\textbf{Step 3: Limit identification}

As $m \to \infty$, the diagonal terms converge to the classical Skorohod representation for single fBm \cite[Theorem 4.1]{coutin2002}:
\[
\lim_{m \to \infty} \mathbb{B}_{s,t}^{H_k, m,(2), i,j} = \frac{1}{2} B_{s,t}^{H_k, i} B_{s,t}^{H_k, j} + \frac{1}{2} I_2^{H_k}(\mathbf{1}_{[s,t]}^{\otimes 2}) \delta_{ij},
\]
where the second term is the Skorohod integral representation of the Lévy area for fBm.

For the cross terms, when $k \neq l$, we have by the convergence of Young integrals to Skorohod integrals \cite[Proposition 3.4]{gubinelli2015}:
\[
\lim_{m \to \infty} \int_s^t (B_u^{H_k, m,i} - B_s^{H_k, m,i}) dB_u^{H_l, m,j} = \int_s^t (B_u^{H_k, i} - B_s^{H_k, i}) \delta B_u^{H_l, j},
\]
where the right-hand side denotes the Skorohod integral, which is well-defined since $H_k + H_l > \frac{1}{2}$.

\textbf{Step 4: Verification of convergence}

The convergence holds in $L^p(\Omega)$ for all $p \geq 1$ by the Gaussian hypercontractivity theorem \cite[Theorem 2.7.1]{nualart1995}, and the limit satisfies the algebraic properties of a geometric rough path by continuity. The symmetry of the cross terms follows from the commutativity of the tensor product in the second level.
\end{proof}

\subsection{Properties of the Representation}

\begin{Proposition}[Isometry property]\label{prop:isometry}
For the diagonal terms, we have the isometry \cite[Theorem 1.1.2]{nualart1995}:
\[
\mathbb{E}\left[ \left| I_2^{H_k}(\mathbf{1}_{[s,t]}^{\otimes 2}) \right|^2 \right] = 2 \|\mathbf{1}_{[s,t]}^{\otimes 2}\|_{\mathcal{H}_{H_k}^{\otimes 2}}^2 = |t-s|^{4H_k}.
\]
For the cross terms with $k \neq l$, the variance satisfies:
\[
\mathbb{E}\left[ \left| \int_s^t (B_u^{H_k} - B_s^{H_k}) \delta B_u^{H_l} \right|^2 \right] \lesssim |t-s|^{2(H_k + H_l)}.\]
\end{Proposition}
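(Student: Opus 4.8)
The plan is to prove the two variance estimates in \Cref{prop:isometry} separately, since they rely on different mechanisms. For the diagonal terms, I would exploit the exact $L^2$-isometry for multiple Wiener integrals, while for the cross terms I would combine independence of the components with the regularity gain $H_k + H_l > \frac{1}{2}$ that makes the Skorohod integral a genuine element of the second Wiener chaos with controllable norm.

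First I would establish the diagonal identity. Recall the standard isometry for the double Wiener integral $I_2^{H_k}$: for any symmetric $f \in \mathcal{H}_{H_k}^{\otimes 2}$ one has $\mathbb{E}[|I_2^{H_k}(f)|^2] = 2\|f\|_{\mathcal{H}_{H_k}^{\otimes 2}}^2$, as cited from \cite[Theorem 1.1.2]{nualart1995}. Applying this to $f = \mathbf{1}_{[s,t]}^{\otimes 2}$, the squared tensor norm factorizes as $\|\mathbf{1}_{[s,t]}^{\otimes 2}\|_{\mathcal{H}_{H_k}^{\otimes 2}}^2 = \|\mathbf{1}_{[s,t]}\|_{\mathcal{H}_{H_k}}^4$. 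The defining inner product on $\mathcal{H}_{H_k}$ gives $\|\mathbf{1}_{[s,t]}\|_{\mathcal{H}_{H_k}}^2 = R_{H_k}(t,t) + R_{H_k}(s,s) - 2R_{H_k}(t,s) = |t-s|^{2H_k}$, which is exactly the fBm increment variance. Substituting yields $\mathbb{E}[|I_2^{H_k}(\mathbf{1}_{[s,t]}^{\otimes 2})|^2] = 2\,|t-s|^{4H_k}$, matching the claimed formula once the factor of $2$ is tracked consistently.

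For the cross terms I would work in the chaos expansion of the Skorohod integral. Writing $F_u = B_u^{H_k} - B_s^{H_k}$ and noting that $B^{H_k}$ and $B^{H_l}$ are independent, the Skorohod integral $\delta^{H_l}(F_{\cdot}\mathbf{1}_{[s,t]})$ admits the decomposition $\delta^{H_l}(F\mathbf{1}_{[s,t]}) = \int_s^t F_u\,\delta B_u^{H_l} + \langle D^{H_l}F, \mathbf{1}_{[s,t]}\rangle_{\mathcal{H}_{H_l}}$; since $F$ depends only on $B^{H_k}$, its $B^{H_l}$-Malliavin derivative vanishes and the correction term is zero. Thus the integral lies in a mixed chaos and its variance can be computed via the standard $L^2$-bound for Skorohod integrals, $\mathbb{E}[|\delta^{H_l}(u)|^2] \le \mathbb{E}[\|u\|_{\mathcal{H}_{H_l}}^2] + \mathbb{E}[\|Du\|_{\mathcal{H}_{H_l}^{\otimes 2}}^2]$. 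The first term is $\mathbb{E}[\int_{[s,t]^2} F_uF_v\,d\mu_{H_l}(u,v)]$, which after taking the $B^{H_k}$-expectation produces $\mathbb{E}[F_uF_v]\lesssim |t-s|^{2H_k}$ against a kernel of total mass $\lesssim |t-s|^{2H_l}$; the derivative term contributes at the same order. Collecting the pieces gives the bound $\mathbb{E}[|\int_s^t F_u\,\delta B_u^{H_l}|^2]\lesssim |t-s|^{2(H_k+H_l)}$.

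The main obstacle will be making the cross-term estimate fully rigorous, because the pairing $\langle\mathbf{1}_{[u,t]}\otimes\mathbf{1}_{[s,t]},\cdot\rangle$ against the $\mathcal{H}_{H_l}$-inner product is only a finite measure when $H_l > \frac{1}{2}$; for $H_l < \frac{1}{2}$ the kernel $|u-v|^{2H_l-2}$ is singular and $\mathcal{H}_{H_l}$ must be handled as a space of distributions rather than functions. I would circumvent this by controlling the $\mathcal{H}_{H_k}\otimes\mathcal{H}_{H_l}$-norm of the integrand directly through the covariance structure of \Cref{lem:covariance}, which encodes precisely the scaling $|t-s|^{2(H_k+H_l)}$, rather than attempting a pointwise kernel computation. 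The key point is that the condition $H_k + H_l > \frac{1}{2}$, guaranteed by $\min\{H_k\} > \frac{1}{4}$, ensures the relevant tensor-product norm is finite and scales correctly, so the implied constant depends only on the Hurst parameters and not on $s,t$.
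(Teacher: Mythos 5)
Your proposal is correct and follows essentially the same route as the paper: the Wiener--integral isometry for the diagonal term, and for the cross term the observation that independence of $B^{H_k}$ and $B^{H_l}$ kills the Malliavin correction (the paper's ``trace terms''), leaving an $L^2$-norm that is then bounded via the covariance structure to get the scaling $|t-s|^{2(H_k+H_l)}$. Two of your side remarks are worth keeping: with the standard isometry $\mathbb{E}[|I_2(f)|^2]=2\|f\|^2$ and $\|\mathbf{1}_{[s,t]}\|^2_{\mathcal{H}_{H_k}}=|t-s|^{2H_k}$ the diagonal second moment comes out as $2\,|t-s|^{4H_k}$, so the displayed equality in the statement hides a factor of $2$ (or an unstated symmetrization convention); and your caveat about $\mathcal{H}_{H_l}$ being a space of distributions when $H_l<\tfrac12$ flags a real gap that the paper glosses over by invoking ``Young integral estimates,'' and which genuinely requires the condition $H_k+H_l>\tfrac12$ together with a H\"older-type embedding into $\mathcal{H}_{H_l}$ to close.
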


\begin{proof}
The first statement follows from the standard isometry property of multiple Wiener integrals \cite[Proposition 1.1.4]{nualart1995}. 

For the cross terms, we use the Malliavin calculus representation and the fact that:
\[
\mathbb{E}\left[ \left( \int_s^t (B_u^{H_k} - B_s^{H_k}) \delta B_u^{H_l} \right)^2 \right] = \mathbb{E}\left[ \left( \int_s^t (B_u^{H_k} - B_s^{H_k}) dB_u^{H_l} \right)^2 \right] + \text{trace terms},
\]
where the trace terms vanish due to the independence of the components \cite[Corollary 6.3.2]{nualart1995}. The bound then follows from Young integral estimates.
\end{proof}

\begin{Proposition}[Chaos expansion]\label{prop:chaos}
The second level process $\mathbb{M}_{s,t}^{(2)}$ admits a finite chaos expansion:
\[
\mathbb{M}_{s,t}^{(2)} \in \bigoplus_{n=0}^4 \mathcal{H}_n,
\]
where $\mathcal{H}_n$ denotes the $n$-th Wiener chaos with respect to the combined noise. Specifically:
\begin{itemize}
\item $\mathcal{H}_0$: Constant terms
\item $\mathcal{H}_1$: First order terms in individual fBms
\item $\mathcal{H}_2$: Second order terms including diagonal Lévy areas
\item $\mathcal{H}_3$: Third order cross terms
\item $\mathcal{H}_4$: Fourth order products
\end{itemize}
\end{Proposition}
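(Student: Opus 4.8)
The plan is to read off the chaos expansion directly from the explicit Skorohod representation of Theorem~\ref{thm:skorohod}, treating the three groups of terms separately and mapping each into the combined Wiener chaos decomposition $L^2(\Omega) = \bigoplus_{n_1,\ldots,n_N \geq 0}\mathcal{H}_{n_1,\ldots,n_N}$, where the $n$-th homogeneous chaos is $\mathcal{H}_n = \bigoplus_{n_1+\cdots+n_N=n}\mathcal{H}_{n_1,\ldots,n_N}$. Since $\min\{H_k\} > \tfrac14$ guarantees, via the second-moment bounds of Proposition~\ref{prop:isometry}, that every term appearing in the representation is a genuine element of $L^2(\Omega)$, each admits a well-defined orthogonal chaos projection, and it suffices to identify the highest chaos level contributed by each group.

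First I would handle the symmetric product term $\tfrac12 M_{s,t}^{(1),i} M_{s,t}^{(1),j}$. Because $M_{s,t}^{(1),i} = \sum_{k} a_k (B_t^{H_k,i}-B_s^{H_k,i})$ is a finite linear combination of increments of independent fBms, it is a pure first-chaos element $I_1(\phi_i)$ for a suitable $\phi_i \in \mathcal{H}$, so the product formula $I_1(\phi_i) I_1(\phi_j) = I_2(\phi_i \widetilde{\otimes}\, \phi_j) + \langle \phi_i,\phi_j\rangle_{\mathcal{H}}$ places this term in $\mathcal{H}_0 \oplus \mathcal{H}_2$. The diagonal Lévy-area terms $\tfrac12\sum_k a_k^2 I_2^{H_k}(\mathbf{1}_{[s,t]}^{\otimes 2})\delta_{ij}$ are second-order multiple Wiener integrals by definition, hence lie in $\mathcal{H}_2$. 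For the cross terms $A_{s,t}^{i,j}$, the integrand $B_u^{H_k,i}-B_s^{H_k,i} = I_1^{H_k,i}(\mathbf{1}_{[s,u]})$ is first chaos, and since the Skorohod integral $\delta$ raises the chaos order by exactly one, each cross integral lies in the mixed bi-chaos $\mathcal{H}_1^{(k)} \otimes \mathcal{H}_1^{(l)} \subset \mathcal{H}_2$. Summing the three groups yields $\mathbb{M}_{s,t}^{(2)} \in \mathcal{H}_0 \oplus \mathcal{H}_2 \subseteq \bigoplus_{n=0}^{4}\mathcal{H}_n$, which establishes the asserted finite chaos expansion.

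The main obstacle is not the finiteness of the expansion, which follows routinely as above, but the reconciliation of the computed projections with the itemized breakdown. The representation of Theorem~\ref{thm:skorohod} produces contributions only in $\mathcal{H}_0$ (the Itô–Stratonovich constant) and in $\mathcal{H}_2$ (diagonal areas, off-diagonal products, and cross Skorohod integrals); nothing in it generates first-, third-, or fourth-order chaos, so those projections vanish. I would therefore expect the sharp statement to be $\mathbb{M}_{s,t}^{(2)} \in \mathcal{H}_0 \oplus \mathcal{H}_2$, and I would flag that the listed $\mathcal{H}_3$ and $\mathcal{H}_4$ components are empty under this representation. If genuinely nontrivial higher chaos were intended, one would have to re-examine the object being expanded, for instance by replacing the pairwise Skorohod integrals with the full Stratonovich iterated integral together with its trace corrections; but even then the Wick/product structure of a double integral of first-chaos data caps the total order at two. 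The cleanest resolution is thus to prove the loose upper bound $\bigoplus_{n=0}^{4}\mathcal{H}_n$, which holds a fortiori, while correcting the descriptive claim to the tight decomposition $\mathcal{H}_0 \oplus \mathcal{H}_2$.
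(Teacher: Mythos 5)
Your proposal follows the same basic route as the paper's own proof: both read the chaos expansion directly off the explicit Skorohod representation of Theorem~\ref{thm:skorohod} and invoke the fact that a multiple Wiener integral of order $n$ lives in the $n$-th chaos. The difference is that you actually carry out the chaos bookkeeping, whereas the paper's proof stops at the assertion that ``the cross terms involve products of first-order integrals, which generate terms up to the fourth Wiener chaos.'' Your accounting is the correct one: the product $M_{s,t}^{(1),i}M_{s,t}^{(1),j}$ of two first-chaos elements lands in $\mathcal{H}_0\oplus\mathcal{H}_2$ by the product formula, the diagonal terms $I_2^{H_k}(\mathbf{1}_{[s,t]}^{\otimes 2})$ are second chaos by definition, and each cross Skorohod integral of a first-chaos integrand is total order two in the combined decomposition, so $\mathbb{M}_{s,t}^{(2)}\in\mathcal{H}_0\oplus\mathcal{H}_2$. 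Indeed, since $\mathbb{M}^{(2)}$ is a quadratic functional of a centered Gaussian family, nothing above the second chaos can appear; a product of two first-order integrals cannot generate fourth-chaos terms, so the paper's justification of the $\mathcal{H}_3$ and $\mathcal{H}_4$ entries does not hold up. The stated containment $\bigoplus_{n=0}^{4}\mathcal{H}_n$ is of course still true a fortiori, and your resolution --- prove the loose upper bound while noting that the sharp statement is $\mathcal{H}_0\oplus\mathcal{H}_2$ and that the itemized $\mathcal{H}_1$, $\mathcal{H}_3$, $\mathcal{H}_4$ components are empty --- is the right one. In short, your argument is not merely adequate: it corrects a substantive inaccuracy in the paper's own proof and statement.
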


\begin{proof}
The result follows from the explicit representation in Theorem \ref{thm:skorohod} and the fact that multiple Wiener integrals of order $n$ live in the $n$-th Wiener chaos \cite[Theorem 1.1.1]{nualart1995}. The cross terms involve products of first-order integrals, which generate terms up to the fourth Wiener chaos.
\end{proof}

\subsection{Applications to Rough Differential Equations}

The Skorohod representation enables us to study rough differential equations driven by GMFBM using probabilistic methods.

\begin{Theorem}[RDE with Skorohod representation]\label{thm:rde-sk}
Consider the rough differential equation:
\[
dY_t = f(Y_t) d\mathbb{M}_t, \quad Y_0 = y_0,
\]
where $f: \mathbb{R}^d \to L(\mathbb{R}^m, \mathbb{R}^d)$ is a $C_b^3$ vector field, and $\mathbb{M}$ is the geometric rough path above GMFBM.

Then the solution $Y_t$ admits a Stroock-Taylor expansion in terms of multiple Skorohod integrals \cite[Theorem 10.6]{friz2010}.
\end{Theorem}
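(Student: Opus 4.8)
The plan is to combine the well-posedness furnished by Lyons' universal limit theorem with the Malliavin differentiability of solutions of Gaussian rough differential equations, and then to invoke the Stroock--Taylor formula. First I would note that, since $\mathbb{M}$ is a geometric $p$-rough path above $M_t^H(a)$ by Theorem \ref{thm:main} and $f \in C_b^3$, the It\^o--Lyons map $\Phi : \mathbb{M} \mapsto Y$ is well-defined and locally Lipschitz continuous (for $p < 3$, i.e. in the regime $\min_k H_k > \tfrac13$), producing a unique solution $Y_t = \Phi_t(\mathbb{M})$ that depends continuously on the driving signal. This reduces the statement to the analysis of $Y_t$ as a functional on the Gaussian probability space carrying the independent components $B^{H_1}, \dots, B^{H_N}$, whose Cameron--Martin space is $\mathcal{H} = \bigoplus_{k=1}^N \mathcal{H}_{H_k}$.

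The central step is to establish that $Y_t$ is Malliavin smooth. The structural fact that makes this possible is that $\mathcal{H}$ enjoys complementary Young regularity against the sample paths exactly under $\min_k H_k > \tfrac14$: each $\mathcal{H}_{H_k}$ embeds continuously into the paths of finite $\rho_k$-variation with $\rho_k = (H_k + \tfrac12)^{-1}$, while $B^{H_k}$ has finite $p_k$-variation for $p_k$ just above $H_k^{-1}$, so the Young cross-integrability condition $\tfrac{1}{p_k} + \tfrac{1}{\rho_k} > 1$ reduces to $2H_k + \tfrac12 > 1$, that is $H_k > \tfrac14$. This is precisely the threshold of Theorem \ref{thm:main}, and it is what permits the Cameron--Martin directional derivatives of $\Phi$ to be computed by Young/rough integration of the linearised equation. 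Differentiating the RDE in a direction $h \in \mathcal{H}$ yields a linear RDE for $D_h Y_t$ driven by the same $\mathbb{M}$ and governed by the Jacobian $J_t$ of the flow; iterating $k$ times, at the cost of one derivative of $f$ per iteration, gives $D^k Y_t \in L^q(\Omega ; \mathcal{H}^{\otimes k})$ with finite moments of all orders, the moment control coming from the Gaussian integrability of $\|\mathbb{M}\|_{p\text{-var}}$ \cite{friz2010}.

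With $Y_t \in \mathbb{D}^{\infty}$ in hand, the Stroock--Taylor formula \cite[Theorem 10.6]{friz2010} applies directly: setting $c_n = \tfrac{1}{n!}\,\mathbb{E}[D^n Y_t] \in \mathcal{H}^{\otimes n}$ for the deterministic symmetric kernels, one obtains the $L^2$-convergent expansion
\[
Y_t = \sum_{n \geq 0} I_n(c_n),
\]
where $I_n$ is the $n$-th multiple integral on the combined Gaussian space. Independence of the $B^{H_k}$ splits $\mathcal{H}^{\otimes n}$ along multi-indices $(n_1,\dots,n_N)$ with $n_1 + \cdots + n_N = n$, so each $I_n(c_n)$ factors into sums of products $\prod_{k=1}^N I_{n_k}^{H_k}(\cdot)$ of the componentwise multiple Wiener integrals of Section \ref{sec:skorohod}. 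Rewriting each multiple Wiener integral iteratively as a nested Skorohod integral, via the adjoint relation between $\delta$ and $D$, yields the claimed representation in terms of multiple Skorohod integrals; at second order the kernels $c_2$ reproduce, at the level of the L\'evy area, the Skorohod expressions already isolated in Theorem \ref{thm:skorohod}, which serves as a consistency check.

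The main obstacle is the Malliavin-smoothness step, specifically obtaining $L^q$ moment bounds for the iterated derivatives rather than mere first-order differentiability. The integrability of $\|\mathbb{M}\|_{p\text{-var}}$ is only Gaussian-type, so the bounds on the iterated Jacobians cannot follow from naive Gr\"onwall arguments and must instead be obtained through a localisation along the accumulated local $p$-variation of $\mathbb{M}$. A second, more delicate point is that the hypothesis $f \in C_b^3$ caps the number of Malliavin derivatives one may legitimately produce: the genuinely infinite Stroock--Taylor series requires $f \in C_b^{\infty}$, so under $C_b^3$ the honest conclusion is a finite expansion $Y_t = \sum_{n=0}^{2} I_n(c_n) + R_t$ with a controlled remainder $R_t$, and I would either strengthen the hypothesis to $C_b^{\infty}$ or state the result in this truncated form. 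The several regularity scales $H_k$ enter only through the componentwise Young thresholds, all met under $\min_k H_k > \tfrac14$, so the mixing contributes no obstruction beyond the bookkeeping of the multi-index chaos decomposition.
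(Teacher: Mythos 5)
Your argument is correct in outline but takes a genuinely different route from the paper's. The paper obtains the expansion by iterating the integral equation directly: it writes $Y_t = y_0 + \sum_\alpha f_\alpha(y_0)\, \delta M^{\alpha_n}\cdots\delta M^{\alpha_1}$, a Chen--Fliess-type stochastic Taylor series whose coefficients are the iterated vector-field contractions at $y_0$ and whose basis elements are iterated Skorohod integrals over the simplex, with convergence asserted from the $p$-variation bounds on $\mathbb{M}$ and the regularity of $f$. You instead go through Malliavin calculus: complementary Young regularity of $\mathcal{H}=\bigoplus_k \mathcal{H}_{H_k}$ under $\min_k H_k>\tfrac14$, Malliavin smoothness of the It\^o--Lyons map, the genuine Stroock formula $Y_t=\sum_n I_n\bigl(\tfrac{1}{n!}\mathbb{E}[D^nY_t]\bigr)$, and finally the multi-index factorization of the chaos over the independent components. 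Your route is heavier --- it hinges on Cass--Litterer--Lyons-type integrability of the iterated Jacobians, which you rightly single out as the hard step --- but it delivers more: deterministic symmetric kernels, $L^2$ convergence, and a consistency check against the second-level representation of Theorem \ref{thm:skorohod}. Your observation that $f\in C_b^3$ only licenses finitely many derivatives, hence an honest conclusion that is a truncated expansion with remainder, is a genuine point and it applies equally to the paper's own proof, which needs $f_\alpha(y_0)$ for arbitrarily long words $\alpha$ (hence $f\in C_b^\infty$) to make sense of the full series; strengthening the hypothesis or truncating, as you propose, is the right fix. Likewise your implicit restriction to $p<3$ is not a loss: since the paper constructs $\mathbb{M}$ only up to level two, that is in fact the regime in which the statement can be read literally.
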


\begin{proof}
The existence and uniqueness follow from Lyons' universal limit theorem applied to the geometric rough path $\mathbb{M}$. The Stroock-Taylor expansion is obtained by iterating the Skorohod representation and using the chaos decomposition of the solution \cite[Section 10.3]{friz2010}.

Specifically, the solution can be expanded as:
\[
Y_t = y_0 + \sum_{\alpha} f_\alpha(y_0) \int_0^t \int_0^{t_1} \cdots \int_0^{t_{n-1}} \delta M_{t_n}^{\alpha_n} \cdots \delta M_{t_1}^{\alpha_1},
\]
where the sum is over multi-indices $\alpha = (\alpha_1, \ldots, \alpha_n)$ and the integrals are iterated Skorohod integrals. The convergence of this series follows from the estimates on the $p$-variation of the rough path and the regularity of the vector field $f$.
\end{proof}

\subsection{Example: Two-Component Case}

\begin{Example}[Detailed Skorohod representation for $N=2$]
Consider $M_t = aB_t^H + bB_t^K$ with $H, K > \frac{1}{4}$. The second level process has the explicit representation:
\begin{align*}
\mathbb{M}_{s,t}^{(2), i,j} = & \frac{1}{2} M_{s,t}^{(1), i} M_{s,t}^{(1), j} + \frac{a^2}{2} I_2^H(\mathbf{1}_{[s,t]}^{\otimes 2}) \delta_{ij} + \frac{b^2}{2} I_2^K(\mathbf{1}_{[s,t]}^{\otimes 2}) \delta_{ij} \\
& + ab \int_s^t (B_u^H - B_s^H) \delta B_u^K \delta_{ij} + ab \int_s^t (B_u^K - B_s^K) \delta B_u^H \delta_{ij}.
\end{align*}
The cross terms are symmetric due to the commutativity of the tensor product in the second level. The variance of the cross terms satisfies:
\[
\mathbb{E}\left[ \left| \int_s^t (B_u^H - B_s^H) \delta B_u^K \right|^2 \right] \asymp |t-s|^{2(H+K)}.
\]
\end{Example}

This Skorohod representation provides a powerful tool for analyzing the probabilistic properties of rough differential equations driven by mixed fractional Brownian motions and connects our pathwise construction to the Malliavin calculus framework.

\section{Applications to Rough Differential Equations}\label{sec:applications}

In this section, we apply our construction of geometric rough paths above generalized mixed fractional Brownian motion to study rough differential equations and their fundamental properties. The existence of a geometric rough path enables us to use Lyons' universal limit theorem to solve RDEs driven by GMFBM.

\subsection{Main Existence and Uniqueness Result}

\begin{Theorem}[RDE driven by GMFBM]\label{thm:rde}
Let $\mathbb{M}$ be the geometric rough path above the GMFBM $M_t^H(a) = \sum_{k=1}^N a_k B_t^{H_k}$ with $\min\{H_k\} > \frac{1}{4}$. Consider the rough differential equation:
\begin{equation}\label{rde-main}
dY_t = f(Y_t) d\mathbb{M}_t, \quad Y_0 = y_0 \in \mathbb{R}^e,
\end{equation}
where $f: \mathbb{R}^e \to L(\mathbb{R}^d, \mathbb{R}^e)$ is a $C_b^3$ vector field.

Then there exists a unique solution $Y_t$ to (\ref{rde-main}) that depends continuously on the driving rough path $\mathbb{M}$ in the $p$-variation topology for any $p > \frac{1}{\min\{H_k\}}$.
\end{Theorem}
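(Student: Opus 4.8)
The plan is to reduce the statement to a direct application of Lyons' universal limit theorem (the fundamental theorem of rough path theory recalled in Section~\ref{sec:preliminaries}), since the genuinely hard analytic work---constructing $\mathbb{M}$ as a geometric $p$-rough path above $M^H(a)$---has already been carried out in Theorem~\ref{thm:main}. The key observation is that once $\mathbb{M}$ is known to be geometric with finite $p$-variation, the universal limit theorem treats it as a single $\mathbb{R}^d$-valued driver and is insensitive to the internal multi-component structure; all of the interaction between the fractional scales was absorbed into the construction of the iterated integrals in Theorem~\ref{thm:main} and need not be revisited here.

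First I would fix $p \in (1/\min\{H_k\}, \lfloor p \rfloor + 1)$ and recall from Theorem~\ref{thm:main} that $\mathbb{M} \in \Omega_p(\mathbb{R}^d)$ is geometric with $\|\mathbb{M}\|_{p\text{-var}} < \infty$ almost surely. Next I would verify the regularity hypothesis on the vector field: the universal limit theorem requires $f \in \mathrm{Lip}(\gamma)$ for some $\gamma > p$, which in the second-level regime $p \in [2,3)$ is supplied by the assumption $f \in C_b^3$. With these two ingredients in place, the universal limit theorem yields simultaneously the existence and uniqueness of the solution $Y$ to~\eqref{rde-main}, realized pathwise through the continuous Itô--Lyons solution map $\mathbb{M} \mapsto Y$.

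For the continuous-dependence claim I would invoke the local Lipschitz continuity of the Itô--Lyons map in the $p$-variation metric $d_p$: there is a constant depending on $\|\mathbb{M}\|_{p\text{-var}}$, $T$, and $\|f\|_{C_b^3}$ such that $d_p(Y, \tilde{Y}) \lesssim d_p(\mathbb{M}, \tilde{\mathbb{M}})$ for driving rough paths lying in a fixed $p$-variation ball. Combined with the almost-sure convergence $d_p(\mathbb{M}^m, \mathbb{M}) \to 0$ from Theorem~\ref{thm:main}, this shows that the solutions $Y^m$ driven by the smooth dyadic approximations converge to $Y$, so $Y$ is the canonical solution and inherits the stability of the map. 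The proof then terminates essentially immediately.

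The main obstacle is not analytic depth but a bookkeeping point that must be handled with care: the order of the truncated rough path and the required smoothness of $f$ are both tied to $\lfloor p \rfloor$. When $\min\{H_k\} > 1/3$ one may take $p < 3$, so a level-two rough path together with $f \in C_b^3$ suffices and the statement holds as written. When $1/4 < \min\{H_k\} \le 1/3$, however, one is forced into $p \in [3,4)$, the object $\mathbb{M}$ must carry its third level (as provided by the Coutin--Qian construction underlying Theorem~\ref{thm:main}), and the universal limit theorem then demands $f \in \mathrm{Lip}(\gamma)$ with $\gamma > 3$. In that regime I would strengthen the hypothesis to $f \in C_b^{\lfloor p \rfloor + 1}$, or equivalently restrict the $C_b^3$ version to $\min\{H_k\} > 1/3$; in either case the remainder of the argument is unchanged.
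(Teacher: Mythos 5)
Your proposal is correct and follows essentially the same route as the paper: both reduce the theorem to Lyons' universal limit theorem applied to the geometric rough path from Theorem~\ref{thm:main}, using the uniform $p$-variation bounds, the convergence of the dyadic approximations, and the local Lipschitz continuity of the It\^o--Lyons map for existence, uniqueness, and stability. Your closing remark about the regularity bookkeeping is a genuine improvement on the paper's own argument: when $\tfrac{1}{4} < \min\{H_k\} \le \tfrac{1}{3}$ one is forced to take $p \ge 3$, the universal limit theorem requires $f \in \mathrm{Lip}(\gamma)$ with $\gamma > p$, and $C_b^3$ no longer suffices --- a point the paper's proof silently ignores, so your proposed fix (strengthening to $C_b^{\lfloor p \rfloor + 1}$ or restricting to $\min\{H_k\} > \tfrac{1}{3}$) is needed for the statement to hold as claimed.
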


\begin{proof}
The proof follows from Lyons' universal limit theorem \cite[Theorem 3.1.1]{lyons2002} applied to our geometric rough path $\mathbb{M}$. Specifically:

\textbf{Step 1: Approximation scheme}

Let $\mathbb{M}^m$ be the sequence of smooth rough paths associated with the dyadic approximations of the GMFBM. For each $m$, consider the ordinary differential equation:
\[
dY_t^m = f(Y_t^m) dM_t^m, \quad Y_0^m = y_0.
\]
Since $M_t^m$ is piecewise linear and of bounded variation, these ODEs have unique solutions by classical theory \cite[Chapter 1]{lyons2002}.

\textbf{Step 2: Uniform estimates}

Using the uniform $p$-variation bounds established in Proposition \ref{prop:pvar}, we show that the sequence $(Y^m, \mathbb{M}^m)$ is bounded in the $p$-variation rough path topology. The $C_b^3$ regularity of $f$ ensures that the solution map is locally Lipschitz continuous in the $p$-variation metric \cite[Theorem 3.1.1]{lyons2002}. Specifically, there exists a constant $C_f$ such that:
\[
\|Y^m\|_{p\text{-var}} \leq C_f(1 + \|\mathbb{M}^m\|_{p\text{-var}}).
\]

\textbf{Step 3: Convergence}

Since $\mathbb{M}^m \to \mathbb{M}$ in $p$-variation almost surely by Theorem \ref{thm:main}, and the solution map is continuous in the rough path topology \cite[Theorem 3.1.1]{lyons2002}, we have $Y^m \to Y$ uniformly, where $Y$ is the unique solution to the RDE driven by $\mathbb{M}$.

\textbf{Step 4: Uniqueness}

Uniqueness follows from the local Lipschitz property of the solution map in the $p$-variation topology. If $Y$ and $\tilde{Y}$ are two solutions, then:
\[
\|Y - \tilde{Y}\|_{p\text{-var}} \leq L_f \|\mathbb{M}\|_{p\text{-var}} \|Y - \tilde{Y}\|_{p\text{-var}},
\]
which implies $Y = \tilde{Y}$ for sufficiently small intervals, and by patching, globally \cite[Section 3.2]{friz2010}.
\end{proof}

\subsection{Itô-Stratonovich Correction for Mixed Case}

An important feature of rough paths theory is its ability to handle different stochastic calculi in a unified framework. In the mixed case, we obtain a generalized Itô-Stratonovich correction.

\begin{Proposition}[Itô-Stratonovich correction]\label{prop:ito-strat}
Let $f \in C_b^3(\mathbb{R}^e, L(\mathbb{R}^d, \mathbb{R}^e))$. The solution $Y_t$ of the RDE (\ref{rde-main}) interpreted in the Stratonovich sense (i.e., using the geometric rough path $\mathbb{M}$) satisfies the following Itô-type equation:
\[
dY_t = f(Y_t) dM_t + \frac{1}{2} \sum_{i,j=1}^d \sum_{k=1}^N a_k^2 \partial_j f^i(Y_t) f^j(Y_t) d\langle B^{H_k} \rangle_t + C_{s,t},
\]
where the cross terms $C_{s,t}$ involve covariations between different fractional components and satisfy:
\[
|C_{s,t}| \lesssim |t-s|^{2\min\{H_k\}}.
\]
\end{Proposition}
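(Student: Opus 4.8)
The plan is to compare the Stratonovich solution $Y$ of \eqref{rde-main}, which by construction is driven by the geometric rough path $\mathbb{M}$, against the Itô-type dynamics obtained by replacing the geometric second level with its Skorohod counterpart. The engine is the local Davie/Euler expansion of the rough-path solution; written to second level,
\[
Y_{s,t} = f(Y_s)\, M_{s,t} + \sum_{i,j=1}^d \partial_j f^i(Y_s)\, f^j(Y_s)\, \mathbb{M}_{s,t}^{(2),i,j} + R_{s,t},
\]
where the remainder $R_{s,t}$ collects the higher-level terms of $\mathbb{M}$ and, by the $p$-variation bounds of Proposition \ref{prop:pvar} together with the $C_b^3$ regularity of $f$, is of strictly higher order than the level-two contribution when summed against a partition. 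First I would fix this expansion and isolate the second-level term, since the entire correction lives there: the first-level term $f(Y_s)M_{s,t}$ is common to both interpretations.

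Next I would substitute the Skorohod decomposition of $\mathbb{M}^{(2)}$ from Theorem \ref{thm:skorohod} and read off its three structural pieces. The symmetric product $\tfrac12 M^i_{s,t}M^j_{s,t}$ reassembles, together with $f(Y_s)M_{s,t}$, into the pathwise term $f(Y_t)\,dM_t$ via the product rule; the diagonal trace terms $\tfrac12 a_k^2 I_2^{H_k}(\mathbf{1}_{[s,t]}^{\otimes 2})\delta_{ij}$ encode the self-bracket of each independent component; and the off-diagonal terms $A_{s,t}^{i,j}$ are the genuine cross-component contributions. Contracting the diagonal trace piece against $\partial_j f^i(Y_s) f^j(Y_s)$ and summing over $i,j,k$ produces exactly the drift correction $\tfrac12 \sum_{i,j}\sum_k a_k^2\, \partial_j f^i(Y_t) f^j(Y_t)\, d\langle B^{H_k}\rangle_t$, where $d\langle B^{H_k}\rangle_t$ is the scalar bracket of the $k$-th component---literally $dt$ when $H_k=\tfrac12$, and otherwise read off intrinsically from the symmetric trace term supplying the Stratonovich-minus-Itô defect of that component. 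Conceptually, only the symmetric part of $\mathbb{M}^{(2)}$ feeds the drift; the antisymmetric Lévy-area part is the common ``genuinely rough'' contribution and produces no correction.

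Finally I would collect the off-diagonal part into $C_{s,t}$ and establish the stated bound. By Proposition \ref{prop:isometry} each cross integral obeys $\mathbb{E}\big[|A_{s,t}^{i,j}|^2\big] \lesssim |t-s|^{2(H_k+H_l)}$; since $H_k+H_l \ge 2\min\{H_k\}$ and $|t-s|\le T$, this yields $\big(\mathbb{E}|C_{s,t}|^2\big)^{1/2} \lesssim |t-s|^{2\min\{H_k\}}$. I would upgrade this $L^2$ estimate to the almost-sure pathwise bound $|C_{s,t}|\lesssim |t-s|^{2\min\{H_k\}}$ through Gaussian hypercontractivity (all the $A_{s,t}^{i,j}$ lie in a fixed finite sum of Wiener chaoses, as in Proposition \ref{prop:cauchy}) followed by a Kolmogorov/Garsia--Rodemich--Rumsey continuity argument, exactly as in Proposition \ref{prop:pvar}. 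Assembling the three pieces into the Davie expansion and passing to the partition limit then gives the claimed Itô-type equation.

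The hard part will be giving rigorous meaning to the bracket $\langle B^{H_k}\rangle$ for components with $H_k\neq\tfrac12$: the classical quadratic variation vanishes for $H_k>\tfrac12$ and is infinite for $H_k<\tfrac12$, so the correction cannot be read as a pathwise quadratic variation and must be defined intrinsically through the symmetric trace term $I_2^{H_k}$ of the Skorohod representation. Thus for $H_k>\tfrac12$ the diagonal correction collapses and the only surviving non-pathwise piece is the cross contribution $C_{s,t}$, whereas for $H_k<\tfrac12$ one must verify that the trace terms remain well-defined second-chaos elements with the regularity recorded in Proposition \ref{prop:isometry}, so that their (centered) accumulation along the dyadic mesh is controlled despite $2H_k<1$. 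Controlling the interaction between components of genuinely different regularity---ensuring the cross terms remain subdominant at order $|t-s|^{2\min\{H_k\}}$ and that the remainder $R_{s,t}$ is summable against the partition---is the principal technical obstacle.
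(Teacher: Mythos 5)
Your proposal is correct and follows essentially the same route as the paper: both extract the correction from the explicit second-level representation of $\mathbb{M}^{(2)}$ (Theorem \ref{thm:skorohod}), read off the drift from the diagonal trace terms, and collect the off-diagonal contributions into $C_{s,t}$ bounded via Proposition \ref{prop:isometry} by $|t-s|^{2\min\{H_k\}}$. Your version is considerably more detailed (explicit Davie expansion, $L^2$-to-pathwise upgrade via hypercontractivity and Garsia--Rodemich--Rumsey) and, usefully, you identify explicitly the point the paper's proof passes over in silence --- that $d\langle B^{H_k}\rangle_t$ has no classical meaning for $H_k \neq \tfrac12$ and must be defined intrinsically through the symmetric second-chaos term, and that only the symmetric part of $\mathbb{M}^{(2)}$ (not the Lévy area, as the paper's wording suggests) feeds the drift correction.
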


\begin{proof}
The proof follows from the explicit representation of the second level process $\mathbb{M}^{(2)}$ and the fact that for a geometric rough path, the RDE solution coincides with the Stratonovich interpretation \cite[Section 5.2]{lyons2002}. The correction terms come from the Lévy area contributions in the second level process.

More precisely, the Itô-Stratonovich correction is given by:
\[
\frac{1}{2} \sum_{i,j=1}^d \partial_j f^i(Y_t) f^j(Y_t) d[\mathbb{M}^{(1), i}, \mathbb{M}^{(1), j}]_t,
\]
where the quadratic variation decomposes as:
\[
d[\mathbb{M}^{(1), i}, \mathbb{M}^{(1), j}]_t = \sum_{k=1}^N a_k^2 d[B^{H_k,i}, B^{H_k,j}]_t + \text{cross terms}.
\]
The cross terms vanish in expectation but contribute to the higher-order structure due to the dependence in the second level process.
\end{proof}

\subsection{Regularity of Solutions}

\begin{Theorem}[Hölder regularity of solutions]\label{thm:holder}
Let $Y_t$ be the solution of the RDE (\ref{rde-main}). Then almost every sample path of $Y_t$ is locally Hölder continuous of order $\gamma$ for any:
\[
\gamma < \min\{H_1, \ldots, H_N\}.
\]
In particular, if $\min\{H_k\} > \frac{1}{2}$, the solution has long-range dependence properties inherited from the driving noise.
\end{Theorem}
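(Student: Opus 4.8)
The plan is to transfer the Hölder regularity of the driving signal to the solution of the RDE via the continuity of the Itô–Lyons solution map, exploiting that a local bound on the increments of the driving rough path controls the local increments of $Y$. Concretely, I would establish the estimate $\gamma < \min\{H_k\}$ by combining the Hölder regularity of the sample paths of $M_t^H(a)$ (recorded in the H\"older regularity proposition of Section \ref{sec:preliminaries}, which gives local H\"older continuity of every order $\gamma < \min\{H_k\}$) with the Lipschitz dependence of the solution on the driving path.

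First I would fix $p > \tfrac{1}{\min\{H_k\}}$ and recall from Theorem \ref{thm:main} and Proposition \ref{prop:pvar} that $\mathbb{M}$ is a geometric $p$-rough path with almost surely finite $p$-variation; in particular, on any subinterval $[s,t] \subseteq [0,T]$ we have the local control $\|\mathbb{M}\|_{p\text{-var};[s,t]}^p \lesssim |t-s|^{p\min\{H_k\}-1+\varepsilon}$ for the first and second levels, which is exactly the Young-type local estimate coming from the Gaussian regularity bounds $|\,M_v-M_u| \lesssim |v-u|^{\min\{H_k\}-1/p}$ and $|\mathbb{M}_{u,v}^{(2)}| \lesssim |v-u|^{2\min\{H_k\}-2/p}$ derived in the proof of Proposition \ref{prop:pvar}. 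Second, I would invoke the standard a priori estimate from Lyons' universal limit theorem \cite[Theorem 3.1.1]{lyons2002} in its local form: for a $C_b^3$ vector field $f$, the solution $Y$ to \eqref{rde-main} satisfies $\|Y\|_{p\text{-var};[s,t]} \lesssim \|\mathbb{M}\|_{p\text{-var};[s,t]}$ once $|t-s|$ is small enough that the local rough path norm is bounded by a fixed constant depending on $f$.

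The key step is then to convert these $p$-variation increment bounds into a pointwise H\"older bound. Since $\|Y\|_{p\text{-var};[s,t]}$ dominates the increment $|Y_t - Y_s|$, the local estimate yields $|Y_t - Y_s| \lesssim |t-s|^{\min\{H_k\}-1/p+\varepsilon/p}$ on sufficiently small intervals, and letting $p \downarrow \tfrac{1}{\min\{H_k\}}$ (equivalently taking $\varepsilon$ small and $p$ large) recovers every exponent $\gamma < \min\{H_k\}$. I would phrase this via the embedding of finite $p$-variation paths with uniformly controlled local norms into $\gamma$-H\"older spaces for $\gamma = \min\{H_k\}-1/p$, patching the small-interval estimates over a finite cover of $[0,T]$ to obtain local H\"older continuity on all of $[0,T]$. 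The concluding remark about long-range dependence when $\min\{H_k\} > \tfrac12$ is then immediate, since the solution inherits the memory structure of the dominant fractional component through the driving noise.

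The main obstacle I expect is the passage to the sharp exponent: the a priori bound only gives $\gamma = \min\{H_k\} - 1/p$ for each admissible $p$, so one must be careful that the implicit constants in the local estimate do not blow up as $p \to 1/\min\{H_k\}$ in a way that destroys the supremum over $\gamma$. This is handled by noting that for each fixed $\gamma < \min\{H_k\}$ one can choose a \emph{single} $p$ with $\gamma < \min\{H_k\}-1/p$ and work entirely at that $p$, so the constant depends on $\gamma$ but is finite for every $\gamma$ strictly below the threshold, which is exactly what the statement asserts.
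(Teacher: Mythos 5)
Your proposal is correct and follows essentially the same route as the paper: transfer the local H\"older/$p$-variation control of the rough path $\mathbb{M}$ to the solution $Y$ via the local Lipschitz continuity of the It\^o--Lyons solution map from Lyons' universal limit theorem, then patch over small intervals. If anything, your handling of the exponent is more careful than the paper's sketch, which asserts $\|\mathbb{M}\|_{p\text{-var};[s,t]} \lesssim |t-s|^{\min\{H_k\}}$ outright, whereas your choice of a single admissible $p$ for each fixed $\gamma < \min\{H_k\}$ correctly accounts for the $1/p$ loss and matches the strict inequality in the statement.
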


\begin{proof}
The regularity follows from the local Lipschitz continuity of the solution map in the $p$-variation topology and the Hölder regularity of the GMFBM sample paths established in Section \ref{sec:preliminaries}. More precisely, if $M_t^H(a)$ is Hölder continuous of order $\alpha$, then the solution $Y_t$ is Hölder continuous of the same order modulo the nonlinear effects of the vector field $f$ \cite[Theorem 3.1.1]{lyons2002}.

The key estimate is:
\[
|Y_t - Y_s| \leq C_f \|\mathbb{M}\|_{p\text{-var};[s,t]} (1 + \|Y\|_{\infty;[s,t]}) \leq C_{f,\mathbb{M}} |t-s|^{\min\{H_k\}},
\]
where the second inequality follows from the Hölder regularity of the rough path $\mathbb{M}$ and the fact that $\|\mathbb{M}\|_{p\text{-var};[s,t]} \lesssim |t-s|^{\min\{H_k\}}$ by our construction.
\end{proof}

\subsection{Applications in Mathematical Finance}

The ability to solve RDEs driven by GMFBM opens up new modeling possibilities in mathematical finance, particularly for assets exhibiting multi-scale rough volatility.

\begin{Example}[Rough volatility model with multiple scales]\label{ex:volatility}
Consider a stock price $S_t$ modeled by:
\begin{align*}
dS_t &= \sigma_t S_t dW_t, \\
\sigma_t &= f(Y_t), \\
dY_t &= \mu(Y_t) dt + g(Y_t) d\mathbb{M}_t,
\end{align*}
where $W_t$ is a standard Brownian motion independent of $\mathbb{M}_t$, $\mathbb{M}_t$ is a geometric rough path above a GMFBM capturing volatility factors at different time scales, and $f, g, \mu$ are appropriate functions.

This model can capture:
\begin{itemize}
\item Short-term volatility dynamics (through components with $H_k \approx 0.1$)
\item Medium-term mean reversion (through components with $H_k \approx 0.3$)
\item Long-term trends (through components with $H_k \approx 0.5$)
\end{itemize}
all within a single consistent framework \cite{bayer2020}.
\end{Example}

\begin{Proposition}[Option pricing under GMFBM drivers]\label{prop:pricing}
In the rough volatility model above, European option prices satisfy a partial differential-integral equation involving fractional operators corresponding to the different Hurst parameters in the GMFBM. The characteristic function of $\log S_T$ admits a representation in terms of the solution to a rough differential equation.
\end{Proposition}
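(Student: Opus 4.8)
The plan is to exploit the conditional Gaussianity of the log-price once the volatility trajectory is fixed. Since $W$ is independent of $\mathbb{M}$, I would first condition on the $\sigma$-algebra $\mathcal{F}^{\mathbb{M}}$ generated by the driving rough path. Writing $X_T = \log S_T = \log S_0 - \tfrac12\int_0^T \sigma_t^2\,dt + \int_0^T \sigma_t\,dW_t$ with $\sigma_t = f(Y_t)$, and noting that conditionally on $\mathcal{F}^{\mathbb{M}}$ the stochastic integral $\int_0^T \sigma_t\,dW_t$ is centred Gaussian with variance $V_T := \int_0^T f(Y_t)^2\,dt$, the conditional characteristic function is
\[
\E\!\left[e^{iu X_T}\mid \mathcal{F}^{\mathbb{M}}\right] = S_0^{iu}\exp\!\Big(-\tfrac12\,(u^2 + iu)\,V_T\Big).
\]
Taking expectations yields the Romano--Touzi / Hull--White mixing representation, and for a payoff $\Phi$ the price becomes an average of Black--Scholes prices over the law of the realised variance $V_T$. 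This conditioning is the structural backbone of both claims.

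For the characteristic function claim I would then observe that $V_T$ is a deterministic continuous functional of the solution $Y$ to the RDE \eqref{rde-main}, which exists, is unique, and depends continuously on $\mathbb{M}$ in the $p$-variation topology by Theorem \ref{thm:rde}. Hence
\[
\phi_{\log S_T}(u) = S_0^{iu}\,\E\!\left[\exp\!\Big(-\tfrac12\,(u^2+iu)\,\Psi(\mathbb{M})\Big)\right], \qquad \Psi(\mathbb{M}) := \int_0^T f\big(Y_t^{\mathbb{M}}\big)^2\,dt,
\]
where $Y^{\mathbb{M}}$ is the RDE solution driven by $\mathbb{M}$. Because $\mathbb{M}\mapsto Y^{\mathbb{M}}$, and hence $\mathbb{M}\mapsto\Psi(\mathbb{M})$, are continuous, this is exactly the asserted representation of the characteristic function through the RDE solution; integrability of the exponential is automatic since $\Psi \geq 0$ and $\mathrm{Re}(u^2+iu)=u^2\geq 0$.

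The delicate point is the PIDE claim, since the GMFBM is not Markovian and $(S_t, Y_t)$ alone does not form a Markov process. The plan is to pass to an infinite-dimensional Markovian lift: using the Mandelbrot--Van Ness / Carmona--Coutin representation I would write each component $B^{H_k}_t = \int_0^\infty Z_t^{(k)}(x)\,\nu_k(dx)$, where $Z_t^{(k)}(x)$ solves the Ornstein--Uhlenbeck dynamics $dZ_t^{(k)}(x) = -x\,Z_t^{(k)}(x)\,dt + dW_t^{(k)}$ and $\nu_k$ is the power-law spectral measure associated with $H_k$. In these augmented coordinates the joint system is Markovian, and the value function $P\big(t, s, (z^{(k)}(\cdot))_k\big)$ satisfies a backward Kolmogorov equation whose spatial generator contains, for each $k$, the non-local operator obtained by integrating the OU generators against $\nu_k$; this is precisely the ``fractional operator corresponding to $H_k$''. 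The $N$ distinct memory kernels produce $N$ distinct fractional operators, and the $a_k^2$-weights from the covariance structure (Lemma \ref{lem:moments}) enter as their coefficients.

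I expect the main obstacle to be exactly this last step: giving the non-local operators a precise meaning and establishing that $P$ genuinely solves the associated equation in a well-defined (classical or viscosity) sense. The difficulty is twofold — the lift lives in an infinite-dimensional weighted space, so one must control the domain of the generator and the regularity of $P$ in these coordinates, and the $C_b^3$ regularity of $f,g$ yields Lipschitz dependence of $Y$ on $\mathbb{M}$ but not obviously the smoothness required for a classical PDE. A cleaner fallback, which I would adopt if the classical formulation proves too rigid, is to phrase the result as a path-dependent (functional) PDE in Dupire's sense for the non-anticipative functional $P_t = \E[\Phi(S_T)\mid \mathcal{F}_t]$, where the horizontal derivative encodes the fractional memory; the mixing representation of the first step then supplies the terminal data and guarantees existence of the conditional expectation being characterised.
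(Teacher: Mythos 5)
Your conditioning step is exactly the paper's: both proofs write the price as $\mathbb{E}\bigl[\mathbb{E}[(S_T-K)^+\mid\mathcal{F}^{\mathbb{M}}]\bigr]$, evaluate the inner expectation by Black--Scholes with integrated variance $V_T=\int_0^T f(Y_t)^2\,dt$, and then invoke Theorem \ref{thm:rde} to view $Y$ (hence $V_T$) as a functional of the rough path. Where you diverge is in how the two remaining claims are handled. For the characteristic function, the paper gestures at ``solving a backward RDE via the Feynman--Kac formula,'' whereas you give the explicit forward mixing formula $\phi_{\log S_T}(u)=S_0^{iu}\,\mathbb{E}[\exp(-\tfrac12(u^2+iu)\Psi(\mathbb{M}))]$ with $\Psi$ a continuous functional of the RDE solution; your version is more concrete, correctly computed (the conditional Gaussianity and the $-\tfrac12(u^2+iu)V_T$ exponent check out), and arguably a cleaner realization of the statement ``representation in terms of the solution to an RDE.'' For the PIDE claim, the paper's proof simply does not address it, while you propose the Markovian lift via the Carmona--Coutin Ornstein--Uhlenbeck representation of each $B^{H_k}$, which is the standard and essentially correct route to producing $N$ distinct non-local operators weighted by the $a_k^2$. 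You are right that this is the delicate point: the lift is infinite-dimensional, the domain of the generator and the regularity of the value function are nontrivial, and a classical formulation may have to be weakened to a viscosity or path-dependent (Dupire) formulation. That gap is real, but it is a gap you share with --- and are more candid about than --- the paper itself, whose proof asserts the PIDE without argument. In short: same backbone, a more explicit characteristic-function representation on your side, and a genuinely additional (if incomplete) plan for the fractional PIDE that the paper omits.
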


\begin{proof}
The option price can be expressed as:
\[
C(T,K) = \mathbb{E}[(S_T - K)^+] = \mathbb{E}\left[ \mathbb{E}[(S_T - K)^+ \mid \mathcal{F}^{\mathbb{M}}] \right],
\]
where the inner expectation is computed using the Black-Scholes formula with integrated variance:
\[
V_T = \int_0^T \sigma_t^2 dt = \int_0^T f(Y_t)^2 dt.
\]
The process $Y_t$ satisfies an RDE driven by $\mathbb{M}$, and the characteristic function can be obtained by solving a backward RDE via the Feynman-Kac formula for rough differential equations \cite[Section 10]{friz2010}.
\end{proof}

\subsection{Numerical Approximation Schemes}

The rough path perspective also suggests natural numerical schemes for RDEs driven by GMFBM.

\begin{Proposition}[Davie approximation scheme]\label{prop:davie}
The solution $Y_t$ of (\ref{rde-main}) can be approximated by the scheme:
\[
Y_{t_{k+1}} = Y_{t_k} + f(Y_{t_k}) M_{t_k,t_{k+1}} + \frac{1}{2} Df(Y_{t_k})f(Y_{t_k}) \mathbb{M}_{t_k,t_{k+1}}^{(2)} + R_{t_k,t_{k+1}},
\]
where the remainder satisfies $|R_{s,t}| \lesssim |t-s|^{3\min\{H_k\}}$ and $Df$ denotes the derivative of $f$.
\end{Proposition}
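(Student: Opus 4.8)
The plan is to recognize the stated recursion as the rough-path Euler (Davie) scheme and to identify the remainder $R_{s,t}$ with the one-step error of the rough integral that defines the solution, then control that error through the sewing lemma. Since $\mathbb{M}$ is geometric above $M_t^H(a)$ and $f \in C_b^3$, Theorem \ref{thm:rde} guarantees that the solution $Y$ of \eqref{rde-main} is a controlled path whose increments admit the local expansion
\[
Y_{s,t} = f(Y_s)\, \mathbb{M}_{s,t}^{(1)} + Df(Y_s) f(Y_s)\, \mathbb{M}_{s,t}^{(2)} + R_{s,t},
\]
in which $\int_s^t f(Y_u)\,d\mathbb{M}_u$ is the sewing limit of the compensated Riemann sums built from exactly these two terms. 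Thus the quantity $\Xi_{s,t} := f(Y_s)\mathbb{M}_{s,t}^{(1)} + Df(Y_s)f(Y_s)\mathbb{M}_{s,t}^{(2)}$ is precisely the increment prescribed by the Davie recursion, and the whole problem reduces to bounding $R_{s,t} = Y_{s,t} - \Xi_{s,t}$.

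First I would collect the a priori scalings. From Theorem \ref{thm:holder} the solution satisfies $|Y_{s,t}| \lesssim |t-s|^{\min\{H_k\}}$, while Proposition \ref{prop:pvar} supplies $|\mathbb{M}_{s,t}^{(1)}| \lesssim |t-s|^{\min\{H_k\}}$ and $|\mathbb{M}_{s,t}^{(2)}| \lesssim |t-s|^{2\min\{H_k\}}$. These give $\Xi$ the correct homogeneity and furnish the raw estimates for the sewing argument. The heart of the proof is then the defect of additivity $(\delta\Xi)_{s,u,t} := \Xi_{s,t} - \Xi_{s,u} - \Xi_{u,t}$ for $s \leq u \leq t$. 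Inserting Chen's identity $\mathbb{M}_{s,t}^{(1)} = \mathbb{M}_{s,u}^{(1)} + \mathbb{M}_{u,t}^{(1)}$ and $\mathbb{M}_{s,t}^{(2)} = \mathbb{M}_{s,u}^{(2)} + \mathbb{M}_{u,t}^{(2)} + \mathbb{M}_{s,u}^{(1)} \otimes \mathbb{M}_{u,t}^{(1)}$ and cancelling, one finds the exact expression
\[
(\delta\Xi)_{s,u,t} = -\bigl[f(Y_u) - f(Y_s) - Df(Y_s)f(Y_s)\mathbb{M}_{s,u}^{(1)}\bigr]\mathbb{M}_{u,t}^{(1)} - \bigl[Df(Y_u)f(Y_u) - Df(Y_s)f(Y_s)\bigr]\mathbb{M}_{u,t}^{(2)}.
\]
The decisive observation is that the Chen cross term $\mathbb{M}_{s,u}^{(1)} \otimes \mathbb{M}_{u,t}^{(1)}$ is exactly absorbed by the first-order Taylor expansion of $f(Y_u) - f(Y_s)$ around $Y_s$: writing $Y_{s,u} = f(Y_s)\mathbb{M}_{s,u}^{(1)} + O(|u-s|^{2\min\{H_k\}})$ and using a second-order Taylor estimate on $f$ (whence the $C_b^3$ hypothesis), the first bracket is $O(|u-s|^{2\min\{H_k\}})$, and the second bracket is $O(|u-s|^{\min\{H_k\}})$ by Lipschitz continuity of $Df\cdot f$. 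Multiplying by $\mathbb{M}_{u,t}^{(1)}$ and $\mathbb{M}_{u,t}^{(2)}$ respectively yields $|(\delta\Xi)_{s,u,t}| \lesssim |t-s|^{3\min\{H_k\}}$.

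Finally I would invoke the sewing lemma \cite[Lemma 2.4]{lyons2002}: once $3\min\{H_k\} > 1$, the defect bound forces $R_{s,t} = Y_{s,t} - \Xi_{s,t}$ to inherit the same order, giving $|R_{s,t}| \lesssim |t-s|^{3\min\{H_k\}}$, which is the assertion. The main obstacle is the bookkeeping in the defect computation, namely tracking the cancellation between the Chen cross term and the vector-field Taylor expansion and verifying that every leftover contribution genuinely scales like $|t-s|^{3\min\{H_k\}}$ rather than a lower power; this is exactly where the geometric (Stratonovich) nature of $\mathbb{M}$ and the regularity of $f$ enter. I would also record that the estimate is sharp and self-contained in the regime $\min\{H_k\} > \frac{1}{3}$, where $p < 3$ and the level-two truncation suffices to close the sewing step; for $\min\{H_k\} \in (\frac14,\frac13)$ the third level of the rough path must be incorporated into $\Xi$ before the same argument applies.
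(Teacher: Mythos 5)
Your argument is correct and is the standard Davie--Friz--Victoir Euler estimate carried out in full: identify the germ $\Xi_{s,t}=f(Y_s)\mathbb{M}_{s,t}^{(1)}+Df(Y_s)f(Y_s)\mathbb{M}_{s,t}^{(2)}$, compute its coboundary via Chen's identity, observe the cancellation between the Chen cross term $\mathbb{M}_{s,u}^{(1)}\otimes\mathbb{M}_{u,t}^{(1)}$ and the first-order Taylor term of $f(Y_u)-f(Y_s)$, and conclude by the sewing lemma. The paper does not carry out this computation at all; it disposes of the key estimate in one line by citing the rough-path Taylor theorem of Friz--Victoir (the proof attached to Proposition \ref{prop:convergence}), so your write-up is a self-contained unpacking of exactly the result being cited rather than a genuinely different route. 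Your closing caveat --- that the level-two germ closes the sewing step only when $3\min\{H_k\}>1$, i.e.\ $\min\{H_k\}>\tfrac13$, and that for $\min\{H_k\}\in(\tfrac14,\tfrac13)$ the third level must be added to $\Xi$ --- is accurate and more careful than the paper, which silently restricts to $\min\{H_k\}>\tfrac13$ only in the subsequent convergence-rate proposition.

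One discrepancy deserves explicit mention rather than silent correction: your germ carries the coefficient $1$ on $Df(Y_s)f(Y_s)\mathbb{M}_{s,t}^{(2)}$, whereas the statement (and the paper's key estimate) writes $\tfrac12$. With the paper's normalization $\mathbb{M}_{s,t}^{(2)}=\int_s^t(M_u-M_s)\otimes dM_u$, your own defect computation shows the coefficient must be $1$: with $\tfrac12$ the Chen cross term is only half cancelled, leaving an uncancelled contribution of order $|t-s|^{2\min\{H_k\}}$ in $\delta\Xi$, which destroys the sewing bound unless $\min\{H_k\}>\tfrac12$. So either the paper's $\tfrac12$ is a typo or $\mathbb{M}^{(2)}$ is implicitly meant as twice the symmetrized object; you should state that you are proving the corrected scheme. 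A second, minor point: the pathwise bounds from Proposition \ref{prop:pvar} give H\"older exponents $\min\{H_k\}-1/p$, not $\min\{H_k\}$, so the remainder exponent is really $3(\min\{H_k\}-1/p)$ (equivalently $3/p$ for any admissible $p$); this imprecision is inherited from the statement itself and is not a defect of your argument.
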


\begin{Proposition}[Convergence rates]\label{prop:convergence}
For a partition of $[0,T]$ with mesh size $|\pi|$, the numerical scheme above achieves convergence rate:
\[
\max_k |Y_{t_k} - Y_{t_k}^{\pi}| \lesssim |\pi|^{3\min\{H_k\} - 1}.
\]
In particular, when $\min\{H_k\} > \frac{1}{3}$, we obtain positive convergence rates.
\end{Proposition}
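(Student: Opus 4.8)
The plan is to run the standard local-to-global error analysis for one-step rough path schemes: combine the one-step consistency estimate with the Lipschitz stability of the exact solution flow, then sum the propagated local errors over the partition. Write $\gamma = \min\{H_k\}$ and let $\pi = \{0 = t_0 < t_1 < \cdots < t_N = T\}$ with mesh $|\pi| = \max_k |t_{k+1}-t_k|$, and fix $p \in (1/\gamma, 3)$, which is available precisely because $\gamma > \tfrac13$. Proposition \ref{prop:davie} already supplies the local consistency error: if $\Psi_{s,t}$ denotes the Davie one-step map $y \mapsto y + f(y)M_{s,t} + \tfrac12 Df(y)f(y)\mathbb{M}^{(2)}_{s,t}$ and $\Phi_{s,t}$ the exact solution flow from time $s$ to $t$, then for any starting point $z$ in the relevant bounded region one has $|\Phi_{t_k,t_{k+1}}(z) - \Psi_{t_k,t_{k+1}}(z)| = |R_{t_k,t_{k+1}}| \lesssim |t_{k+1}-t_k|^{3\gamma}$, uniformly in $z$ thanks to the $C_b^3$ bounds on $f$. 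Summing these $N \sim T/|\pi|$ local errors naively already yields $N\,|\pi|^{3\gamma} \sim |\pi|^{3\gamma-1}$, the claimed rate; the real content is to show that error propagation does not destroy this.

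First I would record the stability of the exact flow. Set $\omega(s,t) = \|\mathbb{M}\|^{p}_{p\text{-var};[s,t]}$, which is a superadditive control (so $\sum_k \omega(t_k,t_{k+1}) \le \omega(0,T)$), and which by Proposition \ref{prop:pvar} and Theorem \ref{thm:main} satisfies $\omega(s,t) \lesssim |t-s|^{\gamma p}$ with $\omega(0,T) = \|\mathbb{M}\|^p_{p\text{-var}}$ almost surely finite. The universal limit theorem underlying Theorem \ref{thm:rde} gives the Lipschitz flow estimate $|\Phi_{s,t}(y) - \Phi_{s,t}(y')| \le \exp\!\big(C_f\,\omega(s,t)\big)\,|y-y'|$ with $C_f$ depending only on the $C_b^3$ norm of $f$. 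The crucial point is that this uses the exponent-one control $\omega$, not $\|\mathbb{M}\|_{p\text{-var}}$ itself, so the accumulated Lipschitz factors telescope into a single finite constant rather than blowing up.

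Next I would close the estimate by telescoping the global error through the exact flow. Using the semigroup property $\Phi_{t_k,t_N} = \Phi_{t_{k+1},t_N}\circ\Phi_{t_k,t_{k+1}}$ and inserting the numerical iterates $Y^\pi_{t_k} = \Psi_{t_{k-1},t_k}\circ\cdots\circ\Psi_{t_0,t_1}(y_0)$, one writes
\[
Y_{t_N}-Y^\pi_{t_N} = \sum_{k=0}^{N-1}\Big[\Phi_{t_{k+1},t_N}\big(\Phi_{t_k,t_{k+1}}(Y^\pi_{t_k})\big) - \Phi_{t_{k+1},t_N}\big(\Psi_{t_k,t_{k+1}}(Y^\pi_{t_k})\big)\Big].
\]
Applying the Lipschitz bound for $\Phi_{t_{k+1},t_N}$ (constant $\le \exp(C_f\,\omega(0,T))$) together with the local error estimate gives
\[
\max_k|Y_{t_k}-Y^\pi_{t_k}| \le e^{C_f\,\omega(0,T)}\sum_{k=0}^{N-1}|t_{k+1}-t_k|^{3\gamma} \le e^{C_f\|\mathbb{M}\|^p_{p\text{-var}}}\,T\,|\pi|^{3\gamma-1},
\]
using $\sum_k|t_{k+1}-t_k|^{3\gamma} \le |\pi|^{3\gamma-1}\sum_k|t_{k+1}-t_k| = T|\pi|^{3\gamma-1}$. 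The prefactor is a.s. finite by Theorem \ref{thm:main}, which establishes the stated rate; positivity of the exponent, hence genuine convergence, is exactly the threshold $3\gamma-1>0$, i.e. $\min\{H_k\}>\tfrac13$.

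The main obstacle is the stability step in the rough (rather than smooth ODE) regime: the one-step map involves the second-level term $\mathbb{M}^{(2)}$, which is not additive but obeys Chen's relation, so one must verify the flow Lipschitz estimate with the \emph{exponent-one} control $\omega$ and confirm that the local defect $R$ is genuinely $O(|t-s|^{3\gamma})$ uniformly in the (bounded) numerical iterates — this is where the $C_b^3$ regularity of $f$ and an a priori bound on $\|Y^\pi\|_\infty$ are needed. For the mixed process the additional subtlety is that the off-diagonal Lévy-area pieces of $\mathbb{M}^{(2)}$ from Theorem \ref{thm:skorohod}, encoding the interaction between distinct fractional components, must be shown to contribute to $\omega$ with the same scaling $|t-s|^{\gamma p}$, so that the Gronwall constant stays a.s. finite; once this is in place, the summation and the threshold computation are routine.
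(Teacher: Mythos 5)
Your proposal is correct and follows essentially the same route as the paper: both rest on the one-step truncation error $|R_{s,t}| \lesssim |t-s|^{3\min\{H_k\}}$ from Proposition \ref{prop:davie} and then accumulate it over the $\sim T/|\pi|$ subintervals to obtain the rate $|\pi|^{3\min\{H_k\}-1}$. The paper's proof stops at stating the local estimate, whereas you additionally carry out the flow-stability and telescoping step with the superadditive control $\omega$, which is precisely the error-propagation argument the paper leaves implicit.
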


\begin{proof}
The convergence rate follows from the local truncation error of the Taylor expansion and the regularity of the rough path. The key estimate is:
\[
|Y_t - Y_s - f(Y_s)M_{s,t} - \frac{1}{2}Df(Y_s)f(Y_s)\mathbb{M}_{s,t}^{(2)}| \lesssim |t-s|^{3\min\{H_k\}},
\]
which is a consequence of the rough paths version of Taylor's theorem \cite[Theorem 4.2]{friz2010}.
\end{proof}

\subsection{Stability and Sensitivity Analysis}

\begin{Theorem}[Continuous dependence on parameters]\label{thm:stability}
The solution $Y_t$ of the RDE (\ref{rde-main}) depends continuously on:
\begin{itemize}
\item The Hurst parameters $H_1, \ldots, H_N$
\item The mixing coefficients $a_1, \ldots, a_N$
\item The initial condition $y_0$
\item The vector field $f$ in the $C_b^3$ topology
\end{itemize}
\end{Theorem}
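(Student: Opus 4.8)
The plan is to factor the solution as the image $Y = \Phi(y_0, f, \mathbb{M}(H,a))$ of the Itô--Lyons solution map $\Phi$, and then to exploit that $\Phi$ is already known to be continuous in each of its three arguments, so that the only genuinely new ingredient is the continuous dependence of the driving rough path $\mathbb{M}(H,a)$ on $(H,a)$ in the $p$-variation topology. Continuity in the initial condition $y_0$ and in the vector field $f$ (in the $C_b^3$ topology) is immediate from the local Lipschitz estimate for $\Phi$ furnished by Lyons' universal limit theorem \cite[Theorem 3.1.1]{lyons2002}, exactly as used in the proof of Theorem \ref{thm:rde}; I would record these as direct corollaries and spend no further effort on them. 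The substance is therefore to prove that $(H,a) \mapsto \mathbb{M}(H,a)$ is continuous into $\Omega_p$, and then to compose with $\Phi$.

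To set up the parameter dependence cleanly I would first fix a compact set $K \subset (\tfrac14,1)^N \times (\R^N\setminus\{0\})$ together with a single exponent $p$ satisfying $p > 1/h_0$, where $h_0 := \min_{(H,a)\in K}\min_k H_k > \tfrac14$; all estimates are then carried out in the one fixed space $\Omega_p$. I would realize every component $B^{H_k}$ on a single probability space as a deterministic, $H_k$-continuous functional of a fixed Gaussian white noise (via the Mandelbrot--Van Ness / Volterra representation), driving distinct components by independent noises so as to preserve independence across $k$. Then for each fixed dyadic level $m$ the smooth lift $\mathbb{M}^m(H,a)$ is, almost surely, a jointly continuous function of $(H,a)$, because the piecewise-linear approximation and its finitely many iterated Young integrals depend continuously on the finitely many nodal values, which in turn depend continuously on $(H,a)$.

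The core step is to upgrade the estimates of Section \ref{sec:existence} to be uniform over $K$. Inspecting Lemmas \ref{lem:covariance} and \ref{lem:moments} and Propositions \ref{prop:pvar} and \ref{prop:cauchy}, the implied constants depend on $(H,a)$ only through $\max_k a_k^2$, through $N$, and through the separation $h_0 > \tfrac14$; since $K$ is compact and bounded away from the critical threshold, each such constant admits a uniform bound $C_K$. In particular, the cross-term integrability used in the second-level bound requires $H_i+H_j>\tfrac12$, which holds uniformly on $K$ since $H_i+H_j \geq 2h_0 > \tfrac12$. Consequently the Cauchy estimate of Proposition \ref{prop:cauchy} holds with a constant independent of $(H,a)\in K$, which shows that the dyadic lifts $\mathbb{M}^m(H,a)$ converge to $\mathbb{M}(H,a)$ \emph{uniformly} in $(H,a)\in K$ in the $d_p$ metric. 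A uniform limit of almost surely continuous maps is continuous, so $(H,a)\mapsto\mathbb{M}(H,a)$ is continuous into $\Omega_p$; composing with the continuous map $\Phi$ and combining with the elementary dependence on $y_0$ and $f$ yields the joint continuity asserted in the theorem.

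The main obstacle I expect is precisely this uniformity: the exponent $p$ and the Garsia--Rodemich--Rumsey and hypercontractivity constants entering Propositions \ref{prop:pvar}--\ref{prop:cauchy} degenerate as any $H_k \downarrow \tfrac14$, so the argument is necessarily confined to compact parameter sets separated from the threshold, and the claimed continuity is genuinely local in $(H,a)$ — which is all the theorem requires. A secondary technical point is choosing a representation of the family $\{B^{H_k}\}$ that is simultaneously continuous in $H$ and gives the correct independence across components; the Volterra representation driven by independent white noises handles both, but one must check that the joint continuity in $(H,a)$ of the finite-level lifts survives the passage to the $p$-variation norm, which is exactly where the uniform Cauchy bound does the real work.
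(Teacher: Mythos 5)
Your proposal is correct and shares the paper's overall architecture --- factor the solution as $Y=\Phi(y_0,f,\mathbb{M}(H,a))$, dispose of the $y_0$- and $f$-dependence via the local Lipschitz continuity of the It\^o--Lyons map from Lyons' universal limit theorem, and reduce everything to the continuity of $(H,a)\mapsto\mathbb{M}(H,a)$ in $\Omega_p$ --- but you treat that last, decisive step quite differently, and more honestly. The paper simply asserts a quantitative Lipschitz bound $d_p(\mathbb{M},\tilde{\mathbb{M}})\leq C(|H-\tilde H|+|a-\tilde a|)$ without constructing a coupling; as stated this is not even well posed, since rough paths above GMFBMs with different Hurst parameters live a priori on different probability spaces. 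You supply exactly the missing ingredients: a joint realization of the whole family $\{B^{H_k}\}$ via the Volterra/Mandelbrot--Van~Ness representation driven by common independent white noises, almost sure continuity in $(H,a)$ of each finite-level dyadic lift $\mathbb{M}^m(H,a)$, and a verification that the constants in Lemmas \ref{lem:covariance}--\ref{lem:moments} and Propositions \ref{prop:pvar}--\ref{prop:cauchy} are uniform over compact parameter sets bounded away from the threshold $\tfrac14$, so that the convergence $\mathbb{M}^m\to\mathbb{M}$ is uniform in $(H,a)$ and continuity passes to the limit. This buys a genuine proof of the qualitative continuity the theorem actually asserts, at the price of not recovering the paper's (unsubstantiated) Lipschitz rate. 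The one point you should still nail down is the passage from uniform \emph{moment} bounds on $d_p(\mathbb{M}^m,\mathbb{M}^{m+1})$ to \emph{almost sure} uniform convergence over the uncountable compact set $K$: this needs a Borel--Cantelli or Kolmogorov--chaining argument in the parameter (e.g.\ summable bounds on $\sup_{(H,a)\in K}$ via a net in $K$ together with the equicontinuity of the level-$m$ lifts), which you gesture at but do not spell out; with that added, your argument is complete and strictly stronger than the paper's.
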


\begin{proof}
This follows from the continuous dependence of the rough path $\mathbb{M}$ on the Hurst parameters and mixing coefficients, combined with the continuity of the solution map in Lyons' theory \cite[Theorem 3.1.1]{lyons2002}. The key observation is that the $p$-variation distance between rough paths above GMFBMs with different parameters can be controlled uniformly on compact sets.

Specifically, if $\mathbb{M}$ and $\tilde{\mathbb{M}}$ are rough paths above GMFBMs with parameters $(H,a)$ and $(\tilde{H},\tilde{a})$ respectively, then:
\[
d_p(\mathbb{M}, \tilde{\mathbb{M}}) \leq C(|H - \tilde{H}| + |a - \tilde{a}|).
\]
The result then follows from the local Lipschitz continuity of the solution map.
\end{proof}

\subsection{Example: Linear RDE with GMFBM}

\begin{Example}[Explicit solution for linear RDE]\label{ex:linear}
Consider the linear RDE:
\[
dY_t = A Y_t d\mathbb{M}_t, \quad Y_0 = I,
\]
where $A \in L(\mathbb{R}^d, L(\mathbb{R}^d))$ and $\mathbb{M}$ is the geometric rough path above a GMFBM.

The solution is given by the rough path exponential \cite[Section 7]{friz2010}:
\[
Y_t = \exp\left(A M_t + \frac{1}{2} A^2 \mathbb{M}_t^{(2)} + \cdots\right) = \sum_{n=0}^{\infty} \int_{0 \leq t_1 \leq \cdots \leq t_n \leq t} A^{n} d\mathbb{M}_{t_1} \otimes \cdots \otimes d\mathbb{M}_{t_n},
\]
where the higher-order terms involve the full signature of the rough path. When $A$ commutes with itself at all times, this simplifies to the classical matrix exponential.
\end{Example}

This section demonstrates that the geometric rough path construction for GMFBM developed in this paper provides a solid foundation for studying a wide range of stochastic systems driven by mixed fractional noises, with applications spanning mathematical finance, physics, and engineering.

\section{Signature of Mixed Fractional Brownian Motion}\label{sec:signature}

In this section, we study the signature of mixed fractional Brownian motion, which provides a powerful algebraic framework for analyzing the path properties of GMFBM. The signature captures all the essential information about a path in a coordinate-free manner and has found numerous applications in machine learning, finance, and stochastic analysis \cite{lyons2014,friz2010,chevyrev2018}.

\subsection{Definition and Basic Properties}

\begin{Definition}[Signature of GMFBM]
Let $\mathbb{M}$ be the geometric rough path above the GMFBM $M_t^H(a)$. The signature of $\mathbb{M}$ over the interval $[s,t]$ is defined as the formal series:
\[
S(\mathbb{M})_{s,t} = \sum_{n=0}^{\infty} \int_{s < t_1 < \cdots < t_n < t} d\mathbb{M}_{t_1} \otimes \cdots \otimes d\mathbb{M}_{t_n} \in T((\mathbb{R}^d)),
\]
where $T((\mathbb{R}^d))$ denotes the space of formal tensor series over $\mathbb{R}^d$, and the integrals are interpreted in the rough path sense.
\end{Definition}

\begin{Proposition}[Convergence and existence]\label{prop:sig-conv}
For GMFBM with $\min\{H_k\} > \frac{1}{4}$, the signature $S(\mathbb{M})_{0,T}$ converges in the $p$-variation topology for any $p > \frac{1}{\min\{H_k\}}$. Moreover, $S(\mathbb{M})_{s,t}$ is a group-like element in the tensor algebra, satisfying Chen's identity:
\[
S(\mathbb{M})_{s,u} = S(\mathbb{M})_{s,t} \otimes S(\mathbb{M})_{t,u} \quad \text{for all } 0 \leq s \leq t \leq u \leq T.
\]
\end{Proposition}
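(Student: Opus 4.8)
The plan is to obtain all three assertions from the structural fact, established in Theorem~\ref{thm:main}, that $\mathbb{M}$ is a geometric $p$-rough path: since $\min\{H_k\} > \tfrac14$ we may fix $p \in (1/\min\{H_k\}, 4)$, so that $\lfloor p \rfloor \le 3$ and $\mathbb{M}$ is completely specified by its levels $\mathbb{M}^{(1)}, \dots, \mathbb{M}^{(\lfloor p\rfloor)}$. The signature is nothing but the canonical extension of this truncated functional to all tensor degrees, and convergence, Chen's identity and group-likeness will each be read off from a different property of that extension.

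First I would invoke Lyons' extension theorem \cite{lyons1998,lyons2002} to construct the higher levels $\mathbb{M}^{(n)}_{s,t}$ for $n > \lfloor p\rfloor$. Taking the almost surely finite control $\omega(s,t) = \|\mathbb{M}\|_{p\text{-var};[s,t]}^{p}$ furnished by Proposition~\ref{prop:pvar}, the extension theorem produces a unique multiplicative functional $(1, \mathbb{M}^{(1)}, \mathbb{M}^{(2)}, \dots)$ together with the factorial estimate
\[
\left|\mathbb{M}^{(n)}_{s,t}\right| \le \frac{\omega(s,t)^{n/p}}{\beta\,(n/p)!}, \qquad n \ge 1,
\]
for a constant $\beta$ depending only on $p$. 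This bound is the crux of the convergence claim: because $(n/p)!$ grows super-exponentially while $\omega(s,t)^{n/p}$ grows only geometrically in $n$, the formal series $S(\mathbb{M})_{s,t} = \sum_{n\ge 0} \mathbb{M}^{(n)}_{s,t}$ converges absolutely in any admissible tensor norm on $T((\mathbb{R}^d))$, and each level inherits finite $p$-variation from $\omega$. The same estimate holds a fortiori for every $p' > p$, which yields convergence for all $p > 1/\min\{H_k\}$ as stated. To phrase the convergence in the $p$-variation topology, I would record that the extension map is continuous on $\Omega_p(V)$, so the almost sure convergence $\mathbb{M}^m \to \mathbb{M}$ of Theorem~\ref{thm:main} propagates level-by-level to $S(\mathbb{M}^m) \to S(\mathbb{M})$.

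Chen's identity is then immediate: the extension theorem delivers a genuinely multiplicative functional, so $S(\mathbb{M})_{s,u} = S(\mathbb{M})_{s,t} \otimes S(\mathbb{M})_{t,u}$ holds at each truncation and hence for the full series in $T((\mathbb{R}^d))$. For group-likeness I would argue by approximation. Each smooth rough path $\mathbb{M}^m$ is the step-$\lfloor p\rfloor$ lift of a bounded-variation path, whose signature is classically group-like; equivalently its coordinates obey the shuffle relations $\langle S(\mathbb{M}^m), u\rangle \langle S(\mathbb{M}^m), v\rangle = \langle S(\mathbb{M}^m), \operatorname{sh}(u,v)\rangle$ for all words $u,v$. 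Each such relation involves only finitely many coordinates and is continuous in the $p$-variation topology, so it passes to the limit $S(\mathbb{M}^m)\to S(\mathbb{M})$ established above; consequently $S(\mathbb{M})_{s,t}$ satisfies every shuffle identity and is therefore group-like.

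The principal obstacle is the first step — securing the extension together with its factorial decay — because Theorem~\ref{thm:main} supplies only the finitely many levels below $\lfloor p\rfloor$, and the entire signature must be reconstructed from them; controlling the higher iterated integrals uniformly is exactly what the factorial estimate accomplishes. Once that decay is in place, convergence is automatic and both Chen's identity and group-likeness follow as soft consequences of multiplicativity and of the closedness of the shuffle relations under $p$-variation limits.
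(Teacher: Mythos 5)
Your proposal is correct, and for the convergence claim it is in substance the same as the paper's argument: the paper's ``factorial decay estimate'' $\bigl|\mathbb{M}^{(n)}_{s,t}\bigr| \le \|\mathbb{M}\|_{p\text{-var};[s,t]}^{n}/(n!)^{1/p}$ is precisely the bound coming from Lyons' extension theorem, which you invoke explicitly; you add the useful observation (which the paper leaves implicit) that the extension theorem is what \emph{defines} the levels $\mathbb{M}^{(n)}$ for $n>\lfloor p\rfloor$ in the first place, and that continuity of the extension map transports the convergence $\mathbb{M}^m\to\mathbb{M}$ to the full signatures. Where you genuinely diverge is the algebraic part. The paper verifies Chen's identity by a direct manipulation of the iterated-integral series over adjacent intervals (mislabelled there as the ``shuffle product formula''; it is really just the splitting of the simplex $\{s<v_1<\cdots<v_k<u\}$ at time $t$) and then asserts that group-likeness ``follows from Chen's identity.'' That last step is a gap in the paper: Chen's identity expresses multiplicativity in the time variable, whereas group-likeness is a statement about the algebraic structure of $S(\mathbb{M})_{s,t}$ at a fixed pair $(s,t)$, namely that its coordinates satisfy the shuffle relations; a multiplicative functional need not be group-like. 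Your route repairs this: you get Chen's identity for free from the multiplicativity delivered by the extension theorem, and you establish group-likeness separately by noting that each shuffle relation involves only finitely many signature coordinates, holds for the bounded-variation approximations $\mathbb{M}^m$ by the classical Chen--Ree theorem, and passes to the limit under $p$-variation convergence. This is the standard and correct way to see that limits of smooth rough paths are weakly geometric, and your version of the proof is the more reliable of the two.
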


\begin{proof}
We prove convergence and the group-like property separately.

\textbf{Step 1: Convergence proof}
From Theorem \ref{thm:main}, $\mathbb{M}$ is a $p$-rough path with $p > \frac{1}{\min\{H_k\}}$. By the factorial decay estimate for rough paths \cite[Theorem 2.1.1]{lyons2002}, we have for each $n \geq 1$:
\[
\left\| \int_{s < t_1 < \cdots < t_n < t} d\mathbb{M}_{t_1} \otimes \cdots \otimes d\mathbb{M}_{t_n} \right\| \leq \frac{\|\mathbb{M}\|_{p\text{-var};[s,t]}^n}{(n!)^{1/p}}.
\]

Since $\|\mathbb{M}\|_{p\text{-var};[s,t]} < \infty$ almost surely, the series converges absolutely. More precisely, for any $\epsilon > 0$, there exists $N_0$ such that for all $N > M \geq N_0$:
\[
\left\| \sum_{n=M}^N \int_{s < t_1 < \cdots < t_n < t} d\mathbb{M}_{t_1} \otimes \cdots \otimes d\mathbb{M}_{t_n} \right\| \leq \sum_{n=M}^N \frac{C^n}{(n!)^{1/p}} < \epsilon,
\]
where $C = \|\mathbb{M}\|_{p\text{-var};[0,T]}$. The convergence follows from the ratio test and Stirling's approximation.

\textbf{Step 2: Group-like property proof}
Chen's identity follows from the multiplicative property of rough paths. For any $0 \leq s \leq t \leq u \leq T$, we have:
\begin{align*}
S(\mathbb{M})_{s,t} \otimes S(\mathbb{M})_{t,u} &= \left(\sum_{n=0}^\infty \int_{s < t_1 < \cdots < t_n < t} d\mathbb{M}_{t_1} \otimes \cdots \otimes d\mathbb{M}_{t_n}\right) \\
&\quad \otimes \left(\sum_{m=0}^\infty \int_{t < u_1 < \cdots < u_m < u} d\mathbb{M}_{u_1} \otimes \cdots \otimes d\mathbb{M}_{u_m}\right).
\end{align*}

By the shuffle product formula and the fact that the intervals $[s,t]$ and $[t,u]$ are disjoint, this equals:
\[
\sum_{k=0}^\infty \sum_{i+j=k} \int_{s < v_1 < \cdots < v_k < u} d\mathbb{M}_{v_1} \otimes \cdots \otimes d\mathbb{M}_{v_k} = S(\mathbb{M})_{s,u}.
\]

The group-like property in the tensor algebra then follows from Chen's identity \cite[Theorem 2.2.1]{lyons2002}.
\end{proof}

\subsection{Explicit Calculations for Mixed Case}

The mixed nature of GMFBM leads to interesting algebraic structure in its signature. We provide explicit formulas for the first few levels.

\begin{Proposition}[First and second level signature]\label{prop:sig-levels}
The first two levels of the signature have the following explicit expressions:
\begin{align*}
S(\mathbb{M})_{s,t}^{(1)} &= M_t - M_s = \sum_{k=1}^N a_k (B_t^{H_k} - B_s^{H_k}) \\
S(\mathbb{M})_{s,t}^{(2)} &= \mathbb{M}_{s,t}^{(2)} + \frac{1}{2} (M_t - M_s)^{\otimes 2}
\end{align*}
where $\mathbb{M}_{s,t}^{(2)}$ is given by the Skorohod representation from Theorem \ref{thm:skorohod}.
\end{Proposition}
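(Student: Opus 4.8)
The plan is to verify both identities first for the smooth dyadic approximations $M^m$, where all iterated integrals are classical Riemann--Stieltjes (Young) integrals, and then to pass to the limit using the $p$-variation convergence $\mathbb{M}^m \to \mathbb{M}$ established in Theorem \ref{thm:main} together with the continuity of the signature map. The first-level identity is immediate: by the definition of the signature the degree-one term is the single integral $\int_{s < t_1 < t} d M^m_{t_1} = M^m_t - M^m_s$, and substituting the GMFBM decomposition $M^m_t = \sum_{k=1}^N a_k B^{H_k,m}_t$ and letting $m \to \infty$ gives $S(\mathbb{M})^{(1)}_{s,t} = M_t - M_s = \sum_{k=1}^N a_k (B^{H_k}_t - B^{H_k}_s)$.

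For the second level, I would write the degree-two signature term of the smooth path as the iterated integral $\int_{s < t_1 < t_2 < t} dM^m_{t_1} \otimes dM^m_{t_2} = \int_s^t (M^m_u - M^m_s) \otimes dM^m_u$. The key algebraic input is the classical integration-by-parts (first shuffle) identity for the bounded-variation path $M^m$, namely $\int_s^t (M^m_u - M^m_s) \otimes dM^m_u + \int_s^t dM^m_u \otimes (M^m_u - M^m_s) = (M^m_t - M^m_s)^{\otimes 2}$. The left-hand side is $S(M^m)^{(2)}_{s,t} + (S(M^m)^{(2)}_{s,t})^{\mathsf{T}}$, so the symmetric part of the second-level iterated integral is exactly $\tfrac12 (M^m_t - M^m_s)^{\otimes 2}$, while its antisymmetric part is the L\'evy area carried by $\mathbb{M}^{m,(2)}$. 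Writing $S(M^m)^{(2)}_{s,t} = \mathbb{M}^{m,(2)}_{s,t} + \tfrac12 (M^m_t - M^m_s)^{\otimes 2}$ and passing to the limit, using that $\mathbb{M}^m \to \mathbb{M}$ in $p$-variation and that the signature map and the tensor square are continuous in this topology (by the factorial estimate in Proposition \ref{prop:sig-conv} and the uniform bounds of Proposition \ref{prop:pvar}), yields the stated identity $S(\mathbb{M})^{(2)}_{s,t} = \mathbb{M}^{(2)}_{s,t} + \tfrac12 (M_t - M_s)^{\otimes 2}$.

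The main obstacle is a bookkeeping one rather than an analytic one: one must be explicit about the convention under which $\mathbb{M}^{(2)}$ is taken so that the symmetric correction $\tfrac12 (M_t - M_s)^{\otimes 2}$ appears exactly once. The decomposition above is consistent only if $\mathbb{M}^{(2)}$ is identified with the antisymmetric L\'evy-area component, which must be aligned carefully with the representation in Theorem \ref{thm:skorohod}. The one structural fact that genuinely requires the geometric nature of $\mathbb{M}$ is that the shuffle identity, elementary for the smooth approximants $M^m$, survives the passage to the limit; this is guaranteed precisely because $\mathbb{M}$ is a geometric rough path, i.e.\ a $p$-variation limit of smooth rough paths, so the symmetric-part relation $S(\mathbb{M})^{(2)}_{s,t} + (S(\mathbb{M})^{(2)}_{s,t})^{\mathsf{T}} = (M_t - M_s)^{\otimes 2}$ holds for $\mathbb{M}$ itself.
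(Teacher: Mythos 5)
Your proof follows essentially the same route as the paper's: establish the identity for the smooth dyadic approximants via the integration-by-parts (shuffle) identity and pass to the $p$-variation limit using the geometric nature of $\mathbb{M}$. In fact your version is the more careful one: the paper's ``direct computation'' asserts $(M_t-M_s)^{\otimes 2} = 2\bigl(\int_s^t (M_u-M_s)\otimes dM_u + \int_s^t\int_s^u dM_v\otimes dM_u\bigr)$, in which the two integrals on the right are identical rather than transposes of one another, whereas your identity $\int_s^t (M_u-M_s)\otimes dM_u + \int_s^t dM_u\otimes(M_u-M_s) = (M_t-M_s)^{\otimes 2}$ is the correct form. The convention caveat you raise is also a genuine issue with the statement itself: with the paper's own definition of $\mathbb{M}^{(2)}_{s,t}$ as the full iterated integral $\int_s^t (M_u-M_s)\otimes dM_u$ (used in Section 2 and in Theorem \ref{thm:skorohod}), one has $S(\mathbb{M})^{(2)}_{s,t}=\mathbb{M}^{(2)}_{s,t}$ outright, and the displayed formula double-counts the symmetric part unless $\mathbb{M}^{(2)}$ is reinterpreted as the antisymmetric (L\'evy-area) component, exactly as you observe.
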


\begin{proof}
\textbf{First level:}
By definition, the first level signature is:
\[
S(\mathbb{M})_{s,t}^{(1)} = \int_s^t d\mathbb{M}_u = \mathbb{M}_{s,t}^{(1)} = M_t - M_s.
\]
The decomposition follows immediately from the definition of GMFBM.

\textbf{Second level:}
We use the fundamental relation between the signature and the rough path. For a smooth path $X$, the second level signature is:
\[
S(X)_{s,t}^{(2)} = \int_s^t \int_s^{u} dX_v \otimes dX_u = \int_s^t (X_u - X_s) \otimes dX_u.
\]

In rough path theory, this relation is preserved for geometric rough paths. Using the shuffle product relation \cite[Proposition 2.2.2]{lyons2002}:
\[
S(\mathbb{M})_{s,t}^{(2)} = \mathbb{M}_{s,t}^{(2)} + \frac{1}{2} S(\mathbb{M})_{s,t}^{(1)} \otimes S(\mathbb{M})_{s,t}^{(1)},
\]
which can be verified by direct computation:
\begin{align*}
S(\mathbb{M})_{s,t}^{(1)} \otimes S(\mathbb{M})_{s,t}^{(1)} &= (M_t - M_s) \otimes (M_t - M_s) \\
&= 2\left(\int_s^t (M_u - M_s) \otimes dM_u + \int_s^t \int_s^{u} dM_v \otimes dM_u\right) \\
&= 2\left(\mathbb{M}_{s,t}^{(2)} + \int_s^t \int_s^{u} dM_v \otimes dM_u\right).
\end{align*}

Rearranging gives the desired result. The equality holds for geometric rough paths by continuity.
\end{proof}

\begin{Proposition}[Mixed signature terms]\label{prop:mixed-sig}
The cross terms in the signature capture the interactions between different fractional components. For $i \neq j$, the mixed signature term:
\[
\int_s^t \int_s^{t_2} dB_{t_1}^{H_i} \otimes dB_{t_2}^{H_j}
\]
satisfies the bound:
\[
\mathbb{E}\left[ \left| \int_s^t \int_s^{t_2} dB_{t_1}^{H_i} \otimes dB_{t_2}^{H_j} \right|^2 \right] \lesssim |t-s|^{2(H_i + H_j)}.
\]
\end{Proposition}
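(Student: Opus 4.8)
The plan is to reduce the double integral to a single stochastic integral, exploit the independence of the two components to turn the second moment into a conditional Wiener isometry, and then extract the correct power of $|t-s|$ by self-similarity, so that only the finiteness of a dimensionless reference constant remains to be checked. First I would observe that the inner integral collapses, $\int_s^{t_2} dB_{t_1}^{H_i} = B_{t_2}^{H_i} - B_s^{H_i}$, so that the quantity under study is
\[
J_{s,t} = \int_s^t (B_u^{H_i} - B_s^{H_i})\otimes \delta B_u^{H_j},
\]
interpreted, as in Theorem \ref{thm:skorohod}, as a Skorohod integral (well defined since $H_i + H_j > \tfrac12$). Since $i \neq j$, the processes $B^{H_i}$ and $B^{H_j}$ are independent, so conditioning on $\mathcal{F}^{H_i} := \sigma(B_u^{H_i} : u \in [0,T])$ renders the integrand $g(u) := B_u^{H_i} - B_s^{H_i}$ deterministic and the divergence reduces to an ordinary Wiener integral against $B^{H_j}$; in particular the Malliavin--trace correction terms vanish, exactly as recorded in the proof of Proposition \ref{prop:isometry}. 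The Wiener isometry for fractional Brownian motion \cite{nualart1995} then gives, componentwise,
\[
\mathbb{E}\big[\,|J_{s,t}|^2 \,\big|\, \mathcal{F}^{H_i}\big] = \big\| g\,\mathbf{1}_{[s,t]} \big\|_{\mathcal{H}_{H_j}}^2,
\]
whence $\mathbb{E}[|J_{s,t}|^2] = \mathbb{E}\big[\| g\,\mathbf{1}_{[s,t]}\|_{\mathcal{H}_{H_j}}^2\big]$.

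Next I would extract the scaling. Because the Gaussian Hilbert-space norm is realized as $\| f\,\mathbf{1}_{[s,t]}\|_{\mathcal{H}_{H_j}}^2 = \mathbb{E}\big[|\int_s^t f\, dB^{H_j}|^2\big]$, it is invariant under time translation (stationary increments of $B^{H_j}$) and homogeneous of degree $2H_j$ (self-similarity): writing $h = t-s$ and $\phi(r) := g(s+hr)$ on $[0,1]$, one has $\| g\,\mathbf{1}_{[s,t]}\|_{\mathcal{H}_{H_j}}^2 = h^{2H_j}\| \phi\,\mathbf{1}_{[0,1]}\|_{\mathcal{H}_{H_j}}^2$. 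Stationarity and self-similarity of $B^{H_i}$ give in turn that $\phi = (B_{s+h\cdot}^{H_i} - B_s^{H_i})$ has, as a process on $[0,1]$, the law of $h^{H_i}\beta$ with $\beta$ a standard fBm of Hurst index $H_i$ independent of $B^{H_j}$. Combining the two scalings,
\[
\mathbb{E}[|J_{s,t}|^2] = h^{2(H_i+H_j)}\, C_{H_i,H_j}, \qquad C_{H_i,H_j} := \mathbb{E}\big[\|\beta\,\mathbf{1}_{[0,1]}\|_{\mathcal{H}_{H_j}}^2\big],
\]
so the asserted power $|t-s|^{2(H_i+H_j)}$ is exact, up to the constant $C_{H_i,H_j}$.

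The main obstacle is the finiteness of $C_{H_i,H_j}$, and this is precisely where the hypothesis $H_i + H_j > \tfrac12$ (guaranteed by $\min_k H_k > \tfrac14$) enters. When $H_j > \tfrac12$ it is routine: $\|\beta\,\mathbf{1}_{[0,1]}\|_{\mathcal{H}_{H_j}}^2 = H_j(2H_j-1)\int_0^1\!\int_0^1 \beta_u\beta_v\,|u-v|^{2H_j-2}\,du\,dv$, and since $\mathbb{E}[\beta_u\beta_v]=R_{H_i}(u,v)$ is bounded on $[0,1]^2$ while $2H_j-2>-1$, taking expectations yields a convergent integral. When $H_j \le \tfrac12$ the kernel representation degenerates, and I would instead use the operator form $\|f\,\mathbf{1}_{[0,1]}\|_{\mathcal{H}_{H_j}}^2 = \|K_{H_j}^* f\|_{L^2[0,1]}^2$, where $K_{H_j}^*$ acts, up to smooth weights, as a fractional derivative of order $\tfrac12 - H_j$; the integrand $\beta$ is almost surely Hölder of every order $\gamma < H_i$, and $H_i > \tfrac12 - H_j$ lets $\gamma$ exceed the differentiation order, so $K_{H_j}^*\beta \in L^2[0,1]$ with finite second moment by the Garsia--Rodemich--Rumsey bound for $\beta$. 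Alternatively one may simply invoke Proposition \ref{prop:isometry} (or the second-moment estimate of Lemma \ref{lem:moments}), whose cross-term bound is exactly this statement. The heart of the argument is therefore a regularity trade-off: the integrand inherits smoothness $\approx H_i$ from $B^{H_i}$, which must dominate the roughness $\approx \tfrac12 - H_j$ of integration against $B^{H_j}$, the two balancing precisely at the Young/Skorohod threshold $H_i + H_j = \tfrac12$.
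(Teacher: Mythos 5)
Your proposal is correct, and it reaches the bound by the same underlying strategy as the paper --- use the independence of $B^{H_i}$ and $B^{H_j}$ to factor the second moment, use stationarity and self-similarity to pull out the exact power $|t-s|^{2(H_i+H_j)}$, and reduce everything to the finiteness of a dimensionless constant on $[0,1]$ guaranteed by $H_i+H_j>\tfrac12$. The difference is in execution. The paper writes the second moment as a formal fourfold integral against the ``covariance measures'' $R^{H_i}(dt_1,du_1)\,R^{H_j}(dt_2,du_2)$, rescales, and then simply asserts that the unit-interval expectation is finite because the Young integral exists; when $H_j\le\tfrac12$ that covariance measure is not of finite variation and the formal computation needs justification that the paper does not supply. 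You instead collapse the inner integral, condition on $\mathcal F^{H_i}$ so that the divergence becomes a genuine Wiener integral with deterministic integrand, and apply the isometry $\mathbb{E}[|J_{s,t}|^2\mid\mathcal F^{H_i}]=\|g\mathbf 1_{[s,t]}\|_{\mathcal H_{H_j}}^2$; this both makes the factorization rigorous and isolates exactly what must be checked, namely $\mathbb{E}\|\beta\|_{\mathcal H_{H_j}}^2<\infty$. Your two-regime verification of that constant --- the kernel formula $H_j(2H_j-1)\int\int|u-v|^{2H_j-2}$ for $H_j>\tfrac12$, and the embedding of $\gamma$-H\"older paths into $\mathcal H_{H_j}$ via the fractional-derivative operator $K_{H_j}^*$ with $\gamma>\tfrac12-H_j$ for $H_j\le\tfrac12$ --- is precisely the step the paper glosses over, and it is where the hypothesis $\min_k H_k>\tfrac14$ actually does its work. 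So your argument buys a complete proof where the paper offers a sketch, at the cost of invoking the RKHS machinery ($K_{H_j}^*$ and the H\"older embedding) that the paper avoids by staying at the formal level. One minor point worth flagging in a final write-up: the conditional isometry requires $g\mathbf 1_{[s,t]}\in\mathcal H_{H_j}$ almost surely before you may apply it, so logically the membership argument of your last paragraph should precede, not follow, the isometry step; as written this is only an ordering issue, not a gap.
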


\begin{proof}
We compute the second moment explicitly. Let $I = \int_s^t \int_s^{t_2} dB_{t_1}^{H_i} \otimes dB_{t_2}^{H_j}$. Then:
\begin{align*}
\mathbb{E}[|I|^2] &= \mathbb{E}\left[ \left( \int_s^t \int_s^{t_2} dB_{t_1}^{H_i} \otimes dB_{t_2}^{H_j} \right)^2 \right] \\
&= \int_s^t \int_s^{t_2} \int_s^t \int_s^{u_2} \mathbb{E}[dB_{t_1}^{H_i} dB_{u_1}^{H_i}] \mathbb{E}[dB_{t_2}^{H_j} dB_{u_2}^{H_j}] \\
&= \int_s^t \int_s^{t_2} \int_s^t \int_s^{u_2} R^{H_i}(dt_1, du_1) R^{H_j}(dt_2, du_2),
\end{align*}
where $R^{H}(s,t) = \frac{1}{2}(|t|^{2H} + |s|^{2H} - |t-s|^{2H})$ is the covariance of fBm.

By scaling properties of fBm, we have:
\[
\mathbb{E}[|I|^2] = |t-s|^{2(H_i + H_j)} \mathbb{E}\left[ \left| \int_0^1 \int_0^{t_2} dB_{t_1}^{H_i} \otimes dB_{t_2}^{H_j} \right|^2 \right].
\]

The expectation on the right is finite since $H_i + H_j > \frac{1}{2}$ by assumption, ensuring the Young integral exists. This gives the desired bound.
\end{proof}

\subsection{Signature as a Feature Map}

The signature provides a powerful feature map for paths that is particularly suited for machine learning applications with multi-scale data.

\begin{Theorem}[Universal nonlinearity]\label{thm:universal}
Let $\mathcal{P}$ be the space of GMFBM paths with $\min\{H_k\} > \frac{1}{4}$ equipped with the $p$-variation topology. The signature map $S: \mathcal{P} \to T((\mathbb{R}^d))$ is injective up to tree-like equivalence and has the universal approximation property: any continuous function $f: \mathcal{P} \to \mathbb{R}$ can be uniformly approximated on compact sets by linear functions on the signature.
\end{Theorem}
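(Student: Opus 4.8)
The plan is to establish the two assertions---injectivity up to tree-like equivalence and the universal approximation property---by separate arguments, since they rest on different machinery. For the injectivity claim I would first invoke Theorem \ref{thm:main} to ensure that every element of $\mathcal{P}$ is a genuine geometric $p$-rough path with $p > 1/\min\{H_k\}$, so that $S(\mathbb{M})_{0,T}$ is the well-defined group-like element constructed in Proposition \ref{prop:sig-conv}. The decisive input is then the uniqueness-of-signature theorem for geometric rough paths: a weakly geometric rough path has trivial signature (equal to $\mathbf{1} \in T((\mathbb{R}^d))$) precisely when it is tree-like. This is the rough-path extension of the Hambly--Lyons theorem, due to Boedihardjo, Geng, Lyons, and Yang. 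To apply it I would form the concatenation of $\mathbb{M}$ with the time-reversal of $\tilde{\mathbb{M}}$, using the multiplicative group structure of signatures so that $S(\mathbb{M})_{0,T} = S(\tilde{\mathbb{M}})_{0,T}$ forces the concatenated path to have trivial signature, hence to be tree-like; this is exactly tree-like equivalence of $\mathbb{M}$ and $\tilde{\mathbb{M}}$. Thus $S$ descends to an injection on equivalence classes.

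For the universal approximation property I would fix a compact set $K \subset \mathcal{P}$ and work in $C(K)$ with the uniform norm, aiming to apply the Stone--Weierstrass theorem. Let $\mathcal{A}$ be the collection of functions $\mathbb{M} \mapsto \langle \ell, S(\mathbb{M})_{0,T} \rangle$ as $\ell$ ranges over real linear functionals on the tensor algebra, equivalently finite real combinations of coordinate iterated integrals. Three points must be checked. Continuity of each element of $\mathcal{A}$ on $K$ follows from the factorial-decay estimate of Proposition \ref{prop:sig-conv} together with continuity of the rough-path map in the $p$-variation topology. The algebra property follows from the shuffle identity for geometric rough paths, $\langle \ell_1, S \rangle\, \langle \ell_2, S \rangle = \langle \ell_1 \sqcup\!\!\sqcup\, \ell_2, S \rangle$, which shows $\mathcal{A}$ is closed under pointwise products; since the degree-zero projection yields the constant function $1$, the set $\mathcal{A}$ is a unital subalgebra of $C(K)$. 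Point separation is exactly the injectivity just proved: distinct tree-like classes give distinct signatures, which therefore differ in some tensor level, so a coordinate functional separates them. Stone--Weierstrass then gives density of $\mathcal{A}$ in $C(K)$, which is the claimed approximation statement.

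The main obstacle I anticipate lies in the injectivity step rather than the approximation step. The uniqueness-of-signature theorem is deep, and its rough-path version requires verifying that the tree-like reduction is compatible with the regularity class $p > 1/\min\{H_k\}$; in particular, the time-reversal and concatenation operations must be shown to preserve membership in the relevant geometric rough-path space and to interact correctly with the signature. A secondary technical point is confirming that the $\sqcup\!\!\sqcup$-identity holds for the limiting \emph{geometric} rough path $\mathbb{M}$ and not merely for the smooth dyadic approximants $\mathbb{M}^m$; this I would obtain by passing to the limit in the shuffle relations along the sequence of Theorem \ref{thm:main}, using $p$-variation convergence and continuity of the shuffle operation, with the uniform control across all signature levels supplied by the factorial estimate of Proposition \ref{prop:sig-conv}. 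Once these are settled, the Stone--Weierstrass conclusion is entirely routine.
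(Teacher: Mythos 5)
Your proposal follows essentially the same route as the paper: injectivity via the uniqueness-of-signature (tree-like equivalence) theorem, and universal approximation via Stone--Weierstrass applied to linear functionals on the signature. If anything, your version is more careful than the paper's on two points the paper glosses over---you invoke the shuffle identity to justify that the linear functionals themselves form an algebra (the paper merely asserts closure under multiplication), and you cite the correct rough-path version of the uniqueness theorem with the concatenation-and-reversal argument---so the proposal is sound.
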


\begin{proof}
We prove both properties separately.

\textbf{Step 1: Injectivity proof}
The injectivity (up to tree-like equivalence) follows from \cite[Theorem 3.1.3]{lyons2002}. Specifically, if two paths $\mathbb{M}$ and $\tilde{\mathbb{M}}$ have the same signature, then they differ by a tree-like path. For GMFBM paths, which are almost surely not tree-like, this implies injectivity almost surely.

More precisely, by \cite[Theorem 1.2]{chevyrev2018}, the signature determines the path up to a null set with respect to the $p$-variation topology. Since GMFBM paths have strictly positive $p$-variation for $p > 1/H$, they are uniquely determined by their signatures.

\textbf{Step 2: Universal approximation proof}
Let $f: \mathcal{P} \to \mathbb{R}$ be continuous and $K \subset \mathcal{P}$ compact. By the Stone-Weierstrass theorem for path spaces \cite[Theorem 3.5]{friz2010}, the algebra generated by the coordinate functions of the signature is dense in $C(K)$.

Specifically, consider the set of functions:
\[
\mathcal{A} = \left\{ \mathbb{M} \mapsto \langle \ell, \pi_N(S(\mathbb{M})_{0,T}) \rangle : \ell \in T^N((\mathbb{R}^d))^*, N \geq 0 \right\},
\]
where $\pi_N$ denotes truncation to level $N$. This algebra separates points (by injectivity), contains constants, and is closed under addition and multiplication. Therefore, it is dense in $C(K)$.

For any $\epsilon > 0$, there exists a linear functional $\ell_\epsilon$ on some truncated signature such that:
\[
\sup_{\mathbb{M} \in K} |f(\mathbb{M}) - \langle \ell_\epsilon, \pi_N(S(\mathbb{M})_{0,T}) \rangle| < \epsilon.
\]
This completes the proof.
\end{proof}

\begin{Example}[Signature features for financial time series]\label{ex:sig-features}
Consider a financial time series modeled by GMFBM. The truncated signature up to level $N$:
\[
\pi_N(S(\mathbb{M})_{0,T}) = \left(1, S^{(1)}, S^{(2)}, \ldots, S^{(N)}\right)
\]
provides a feature vector that captures:
\begin{itemize}
\item \textbf{Trend information} (first level): $S^{(1)} = M_T - M_0$
\item \textbf{Volatility and leverage effects} (second level): $S^{(2)} = \mathbb{M}_{0,T}^{(2)} + \frac{1}{2}(M_T - M_0)^{\otimes 2}$
\item \textbf{Multi-scale dependencies} (higher levels and cross terms)
\item \textbf{Path geometry} (all levels combined)
\end{itemize}
These features can be used for classification, regression, or anomaly detection tasks in financial data analysis \cite{signature2024}.
\end{Example}

\subsection{Applications to Parameter Estimation}

The signature provides a natural framework for estimating the parameters of GMFBM from observed data.

\begin{Proposition}[Method of moments via signature]\label{prop:moments}
Let $\{M_{t_i}\}_{i=0}^n$ be discrete observations of a GMFBM path at times $0 = t_0 < t_1 < \cdots < t_n = T$. The empirical signature moments:
\[
\hat{S}^{(k)} = \frac{1}{n} \sum_{i=0}^{n-1} (M_{t_{i+1}} - M_{t_i})^{\otimes k}
\]
provide consistent estimators for the Hurst parameters $H_k$ and mixing coefficients $a_k$ as $n \to \infty$ and $\max|t_{i+1} - t_i| \to 0$.
\end{Proposition}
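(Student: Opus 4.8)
The plan is to decompose the statement into three pieces: (i) an explicit computation of $\mathbb{E}[\hat{S}^{(k)}]$ as a function of the parameters and the sampling mesh; (ii) a law of large numbers showing that each empirical statistic concentrates on that expectation; and (iii) an identifiability argument recovering the full parameter vector $(H_1,\dots,H_N,a_1,\dots,a_N)$ from the collection of these expectations. I first record the Gaussian structure. On a uniform grid with mesh $\Delta = T/n$, each increment $\Delta_i M := M_{t_{i+1}}^H(a) - M_{t_i}^H(a)$ is centered Gaussian with variance $\sigma^2(\Delta) = \sum_{k=1}^N a_k^2 \Delta^{2H_k}$, by independence of the components and the increment property (Lemma \ref{lem:moments}). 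Hence odd-order statistics vanish in mean, while for even $k$ the quantity $\mathbb{E}[\hat{S}^{(k)}]$ is an explicit Gaussian moment, namely a power of $\sigma^2(\Delta)$ up to a combinatorial (Wick) constant; in particular $\mathbb{E}[\hat{S}^{(2)}] = \sigma^2(\Delta)$. The key structural observation is that all dependence on the unknowns enters through the scale profile $\Delta \mapsto \sigma^2(\Delta)$.

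For the consistency step I would normalize. Writing $Z_i = \Delta_i M / \sigma(\Delta)$, the ratio $\hat{S}^{(2)}/\sigma^2(\Delta) = n^{-1}\sum_i Z_i^2$ is a quadratic functional of a stationary centered Gaussian sequence, so by the Isserlis identity its variance is $2 n^{-2}\sum_{i,j} \rho_{i-j}^2$, where $\rho_m$ denotes the lag-$m$ correlation of the mixed increments. Using the covariance decay of Lemma \ref{lem:covariance}, in which the mixed correlations are dominated by the slowest-decaying (largest-$H$) component, one checks that this variance tends to $0$ for every admissible parameter configuration, yielding $\hat{S}^{(2)}/\sigma^2(\Delta) \to 1$ in $L^2$. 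The higher even-order statistics are handled identically through the Wiener-chaos and diagram machinery of Section \ref{sec:skorohod} together with Breuer--Major type results, so the aggregate profile is consistently estimated at each fixed scale.

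The hard part is identifiability, and this is precisely where a single-scale statistic as literally written is insufficient: as $\Delta \to 0$ the term carrying $H_{\min} = \min_k H_k$ dominates $\sigma^2(\Delta)$, so one scale recovers only $H_{\min}$ and its coefficient. I would therefore evaluate the statistics along a geometric family of sub-sampling scales $\Delta, 2\Delta, 4\Delta, \dots$ and exploit the linear independence of the power functions $\Delta \mapsto \Delta^{2H_k}$ (a Müntz--Vandermonde argument): the map $(H_k, a_k^2)_k \mapsto (\sigma^2(2^j \Delta))_j$ is injective on the admissible parameter set, so the parameters are determined by the profile. Consistency of the resulting plug-in estimator then follows by combining the law of large numbers of the previous step with continuity of this inverse map on compact parameter sets.

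I expect the inversion to be the main technical obstacle. Recovering the subdominant exponents $H_k$ requires resolving the lower-order corrections in the $\Delta \to 0$ expansion of $\sigma^2(\Delta)$, so the map is stably invertible only after an explicit peeling of the dominant scale: estimate $(H_{\min}, a_{k^*})$ from the leading log-log slope, subtract the fitted leading term, and iterate on the residual profile. Controlling how the estimation error propagates and amplifies through these successive peeling steps, and ensuring the residuals remain above the stochastic noise floor at each stage, is the delicate point that must be quantified to turn pointwise convergence into consistency of all $2N$ parameters.
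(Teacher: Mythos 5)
Your proposal is correct in outline and follows the same skeleton as the paper's proof (compute the Gaussian moments of the increments, establish a law of large numbers, then invert the scale profile $\Delta\mapsto\sigma^2(\Delta)=\sum_k a_k^2\Delta^{2H_k}$ across several mesh sizes), but you execute two of the three steps quite differently and, in both cases, more substantively. For consistency, the paper simply invokes an ergodic theorem for Gaussian processes, whereas you compute the variance of the normalized quadratic statistic via the Isserlis identity and check that $n^{-2}\sum_{i,j}\rho_{i-j}^2\to 0$; this is the more robust route, since for components with $H_k>3/4$ the squared increment correlations are not summable and one must verify directly that the variance still vanishes (it does, at rate $n^{4H-4}$), a point the ergodic-theorem citation papers over. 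For identifiability, both proofs ultimately rely on evaluating the second moment at several time scales and solving the resulting system, but you correctly observe that the statistic $\hat{S}^{(k)}$ \emph{as literally defined on a single partition} cannot identify more than the dominant pair $(H_{\min},a_{k^*})$, since the smallest Hurst exponent dominates $\sigma^2(\Delta)$ as $\Delta\to 0$; your M\"untz--Vandermonde injectivity argument and the iterative peeling of dominant scales make precise what the paper's phrase ``solving the resulting system of equations'' leaves implicit. You are also right that the error propagation through the peeling stages --- ensuring each residual profile stays above the stochastic noise floor --- is the genuinely delicate point; neither your sketch nor the paper's proof resolves it, so this remains an honest gap in both arguments rather than a defect specific to yours. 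The one thing you omit that the paper includes is the closing remark on asymptotic normality, but that is not needed for the consistency claim actually being proved.
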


\begin{proof}
\textbf{Step 1: Consistency of empirical moments}
By the ergodic theorem for Gaussian processes \cite[Theorem 10.6]{nualart1995}, for stationary increments processes like GMFBM, the empirical moments converge almost surely to their theoretical expectations:
\[
\lim_{n \to \infty} \hat{S}^{(k)} = \mathbb{E}[(M_{t_{i+1}} - M_{t_i})^{\otimes k}] \quad \text{almost surely}.
\]

\textbf{Step 2: Relation to GMFBM parameters}
From the properties of GMFBM, we have:
\begin{align*}
\mathbb{E}[S^{(1)}] &= 0 \\
\mathbb{E}[S^{(1)} \otimes S^{(1)}] &= \sum_{k=1}^N a_k^2 |t_{i+1} - t_i|^{2H_k} I \\
\mathbb{E}[S^{(2)}] &= \frac{1}{2} \mathbb{E}[S^{(1)} \otimes S^{(1)}] + O(|t_{i+1} - t_i|^{2\min\{H_k\}})
\end{align*}

\textbf{Step 3: Parameter identification}
By analyzing the scaling behavior of different tensor components, we can identify the parameters. For example, consider the diagonal elements:
\[
\mathbb{E}[(S^{(1)}_j)^2] = \sum_{k=1}^N a_k^2 |\Delta t|^{2H_k}.
\]

By computing this expectation for different time intervals $\Delta t$ and solving the resulting system of equations, we can estimate $H_k$ and $a_k$. The cross terms provide additional equations for identifying the mixing structure.

\textbf{Step 4: Asymptotic normality}
By the central limit theorem for functionals of Gaussian processes \cite[Theorem 6.3.1]{nualart1995}, the estimators are asymptotically normal with rate $\sqrt{n}$.
\end{proof}

\begin{Example}[Two-component case estimation]\label{ex:estimation}
For $M_t = aB_t^H + bB_t^K$ with $H > K$, the second signature moment satisfies:
\[
\mathbb{E}[(S^{(1)})^2] = a^2|\Delta t|^{2H} + b^2|\Delta t|^{2K}.
\]

By computing this quantity for different time scales and performing a log-log regression, one can estimate $H, K, a, b$ simultaneously. The cross terms in higher signature levels provide additional constraints that improve estimation accuracy.
\end{Example}

\subsection{Numerical Computation}

The signature can be efficiently computed using rough path integration techniques.

\begin{Proposition}[Numerical signature computation]\label{prop:num-sig}
Let $\{M_{t_i}\}_{i=0}^n$ be a discrete sample of GMFBM. The signature can be approximated by the product expansion:
\[
\tilde{S}_{0,T} = \prod_{i=0}^{n-1} \exp\left(\Delta M_i + \frac{1}{2}\Delta\mathbb{M}_i^{(2)} + \cdots + \frac{(-1)^{N-1}}{N}\Delta\mathbb{M}_i^{(N)}\right)
\]
where $\Delta M_i = M_{t_{i+1}} - M_{t_i}$ and $\Delta\mathbb{M}_i^{(k)}$ are the higher-order increments, with error controlled by:
\[
\|S(\mathbb{M})_{0,T} - \tilde{S}_{0,T}\| \lesssim \left(\max_i |t_{i+1} - t_i|\right)^{\min\{H_k\} - 1/p}.
\]
\end{Proposition}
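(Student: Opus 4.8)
The plan is to reduce the global approximation statement to a single-interval (one-step) estimate by exploiting the multiplicative structure of the signature, and then to recombine the local errors through a telescoping argument in the truncated tensor algebra. First I would apply Chen's identity from Proposition \ref{prop:sig-conv} to factor the exact signature over the partition $\pi = \{0 = t_0 < \cdots < t_n = T\}$,
\[
S(\mathbb{M})_{0,T} = S(\mathbb{M})_{t_0,t_1} \otimes \cdots \otimes S(\mathbb{M})_{t_{n-1},t_n},
\]
and observe that the surrogate $\tilde S_{0,T}$ has precisely the same factored form, with each exact step signature $S(\mathbb{M})_{t_i,t_{i+1}}$ replaced by $\exp(L_i)$, where $L_i = \Delta M_i + \tfrac{1}{2}\Delta\mathbb{M}_i^{(2)} + \cdots + \tfrac{(-1)^{N-1}}{N}\Delta\mathbb{M}_i^{(N)}$ is the truncated log-signature increment. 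Since $\mathbb{M}$ is geometric (Theorem \ref{thm:main}), each step signature is group-like with a well-defined logarithm, and by construction $\exp(L_i)$ agrees with $S(\mathbb{M})_{t_i,t_{i+1}}$ through tensor level $N$; hence the local discrepancy $S(\mathbb{M})_{t_i,t_{i+1}} - \exp(L_i)$ is supported on levels strictly above $N$.

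For the one-step estimate I would use the factorial (neo-classical) decay already invoked in the proof of Proposition \ref{prop:sig-conv}, namely \cite[Theorem 2.1.1]{lyons2002}, which bounds the level-$m$ component of the step signature by $\|\mathbb{M}\|_{p\text{-var};[t_i,t_{i+1}]}^{m}/(m!)^{1/p}$. Combined with the H\"older-type control $\|\mathbb{M}\|_{p\text{-var};[s,t]} \lesssim |t-s|^{\min\{H_k\}}$ established in the proof of Theorem \ref{thm:holder}, the first omitted level gives a per-interval residue of order $|t_{i+1}-t_i|^{(N+1)\min\{H_k\}}$. Summing this against $\sum_i |t_{i+1}-t_i| = T$ yields a global truncation contribution $\lesssim |\pi|^{(N+1)\min\{H_k\}-1}$, a genuinely convergent rate once $N$ is taken large enough that $(N+1)\min\{H_k\} > 1$; matching this against the coarser exponent $\min\{H_k\}-1/p$ recorded in the statement, which is of the order of the almost-sure first-level modulus $|M_{t_{i+1}} - M_{t_i}| \lesssim |t_{i+1}-t_i|^{\min\{H_k\}-1/p}$ from the proof of Proposition \ref{prop:pvar}, is addressed in the accumulation step below.

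The accumulation step is where the real work lies. I would bound the difference by a telescoping sum,
\[
S(\mathbb{M})_{0,T} - \tilde S_{0,T} = \sum_{i=0}^{n-1} \left( \bigotimes_{j<i} S(\mathbb{M})_{t_j,t_{j+1}} \right) \otimes \left( S(\mathbb{M})_{t_i,t_{i+1}} - \exp(L_i) \right) \otimes \left( \bigotimes_{j>i} \exp(L_j) \right),
\]
and use submultiplicativity of the tensor norm together with uniform bounds on both the exact factors and their exponential surrogates to reduce $\|S(\mathbb{M})_{0,T} - \tilde S_{0,T}\|$ to a constant multiple of the sum of the $n$ local errors. The uniform control of the head and tail products is supplied by the uniform $p$-variation bound of Proposition \ref{prop:pvar} (for the exact factors) and by the group-like structure (for the surrogates $\exp(L_j)$), so the prefactors do not degrade as $n$ grows.

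I expect the main obstacle to be keeping the constant in the final bound uniform over the refining partition while the per-interval $p$-variation norms are random. The H\"older control $\|\mathbb{M}\|_{p\text{-var};[s,t]} \lesssim |t-s|^{\min\{H_k\}}$ holds only on an almost-sure event with a random constant, so I would fix a single such event (on which the first- and second-level Garsia--Rodemich--Rumsey estimates of Proposition \ref{prop:pvar} hold simultaneously) and carry the resulting random constant uniformly through all $\sim T/|\pi|$ factors and through the tail products $\bigotimes_{j>i}\exp(L_j)$. A secondary delicate point is ensuring the surrogates inherit exactly the same uniform bounds as the true step signatures, so that the telescoping prefactors stay bounded independently of $i$ and $n$; this is also where the precise reconciliation of the convergent rate $|\pi|^{(N+1)\min\{H_k\}-1}$ with the stated exponent $\min\{H_k\}-1/p$ must be made, since for small $N$ the naive telescoping does not by itself produce a positive power of the mesh.
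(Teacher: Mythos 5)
Your proposal follows essentially the same route as the paper's proof: factor the exact signature over the partition via Chen's identity, replace each step signature by the exponential of its truncated logarithm (the Chen--Strichartz step), bound the per-interval residue by the factorial-decay estimate combined with $\|\mathbb{M}\|_{p\text{-var};[t_i,t_{i+1}]}\lesssim |t_{i+1}-t_i|^{\min\{H_k\}}$, and accumulate over the $n$ subintervals. Your explicit telescoping identity for $S(\mathbb{M})_{0,T}-\tilde S_{0,T}$ is a welcome addition: the paper simply asserts that the global error "accumulates as" $n$ times the local error, and your version makes precise what uniform control on the head and tail products is actually needed for that assertion.

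The one point you flag but do not close --- reconciling the derived rate $|\pi|^{(N+1)\min\{H_k\}-1}$ with the stated exponent $\min\{H_k\}-1/p$ --- is not resolved in the paper either: its proof arrives at exactly the same expression $n\cdot\bigl(\max_i|t_{i+1}-t_i|\bigr)^{(N+1)\min\{H_k\}}$ and then declares that "we get the stated error bound," even though these exponents do not match for general $N$ and $p$. So your honesty here exposes a genuine weakness in the original argument rather than a defect specific to your proposal; if anything, your derived rate is the one the method actually delivers, and the stated bound would need a separate justification (or a restatement of the proposition) that neither you nor the paper supplies.
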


\begin{proof}
\textbf{Step 1: Chen's identity discretization}
By Chen's identity, the signature factors over partitions:
\[
S(\mathbb{M})_{0,T} = S(\mathbb{M})_{t_0,t_1} \otimes S(\mathbb{M})_{t_1,t_2} \otimes \cdots \otimes S(\mathbb{M})_{t_{n-1},t_n}.
\]

\textbf{Step 2: Local approximation}
On each subinterval $[t_i, t_{i+1}]$, we approximate the signature by its truncated logarithm:
\[
S(\mathbb{M})_{t_i,t_{i+1}} \approx \exp\left(\Delta M_i + \frac{1}{2}\Delta\mathbb{M}_i^{(2)} + \cdots + \frac{(-1)^{N-1}}{N}\Delta\mathbb{M}_i^{(N)}\right),
\]
where the logarithm is taken in the tensor algebra. This is the Chen-Strichartz formula \cite[Theorem 7.1]{lyons2002}.

\textbf{Step 3: Error analysis}
The local error on each interval satisfies \cite[Proposition 3.6]{friz2010}:
\[
\|S(\mathbb{M})_{t_i,t_{i+1}} - \exp(\log S(\mathbb{M})_{t_i,t_{i+1}})\| \lesssim \|\mathbb{M}\|_{p\text{-var};[t_i,t_{i+1}]}^{N+1}.
\]

Since $\|\mathbb{M}\|_{p\text{-var};[t_i,t_{i+1}]} \lesssim |t_{i+1} - t_i|^{\min\{H_k\}}$, the global error accumulates as:
\[
\|S(\mathbb{M})_{0,T} - \tilde{S}_{0,T}\| \lesssim n \cdot \left(\max_i |t_{i+1} - t_i|\right)^{(N+1)\min\{H_k\}}.
\]

With $n \sim 1/\max_i |t_{i+1} - t_i|$, we get the stated error bound.
\end{proof}

\subsection{Relations to Other Transforms}

The signature generalizes several classical transforms for time series analysis.

\begin{Remark}[Relation to Fourier transform]
The signature can be viewed as a non-commutative generalization of the Fourier transform \cite[Chapter 4]{signature2024}. While Fourier analysis captures frequency information through the characteristic function $\mathbb{E}[e^{i\xi M_t}]$, the signature captures both temporal and algebraic structure through the expected signature $\mathbb{E}[S(\mathbb{M})_{0,T}]$. For linear functionals, the two are related by:
\[
\mathbb{E}[e^{i\langle \xi, M_t \rangle}] = \mathbb{E}[\langle e^{i\xi}, S(\mathbb{M})_{0,t}^{(1)} \rangle].
\]
\end{Remark}

\begin{Remark}[Relation to wavelet analysis]
For multi-scale processes like GMFBM, the signature provides a complementary approach to wavelet analysis. Where wavelets capture local scaling behavior through mother wavelet coefficients, the signature captures global path properties and their algebraic interactions. The different levels of the signature correspond to different scales of path variation, with higher levels capturing finer geometric features.
\end{Remark}

This section demonstrates that the signature provides a powerful unifying framework for analyzing mixed fractional Brownian motions, with applications ranging from theoretical characterization to practical computation and machine learning. The explicit connection between the algebraic structure of the signature and the probabilistic properties of GMFBM opens new avenues for both theoretical analysis and practical applications in finance and data science.

\section{Conclusion}\label{sec:conclusion}

In this paper, we have established a comprehensive theory of geometric rough paths for generalized mixed fractional Brownian motion. Our main contribution is the proof that for a GMFBM $M_t^H(a) = \sum_{k=1}^N a_k B_t^{H_k}$ with $\min\{H_k\} > \frac{1}{4}$, there exists a canonical geometric rough path obtained as the limit of smooth rough paths associated with dyadic approximations.

The key results of this work include:
\begin{itemize}
\item The existence and uniqueness of geometric rough paths above GMFBM (Theorem \ref{thm:main})
\item A Skorohod integral representation connecting the pathwise construction to Malliavin calculus (Theorem \ref{thm:skorohod})
\item Applications to rough differential equations driven by MFBM (Theorem \ref{thm:rde})
\item The optimality of the condition $\min\{H_k\} > \frac{1}{4}$ (Theorem \ref{thm:sharpness})
\end{itemize}

Our work extends the classical results of Coutin and Qian \cite{coutin2002} for single fractional Brownian motion to the mixed case, providing a rigorous mathematical foundation for studying stochastic systems driven by multi-scale fractional noises. The results open up new possibilities for analyzing stochastic differential equations and developing numerical schemes in contexts where processes with multiple regularity scales naturally appear.

Future research directions include extending the theory to the critical case $H = \frac{1}{4}$, studying rough paths above dependent fractional components, and exploring connections with recent developments in regularity structures and paracontrolled calculus.
\bibliographystyle{unsrtnat}
\bibliography{rough-path-MFBM}

\end{document}